\documentclass{article}
\usepackage{amssymb,amsmath,amsfonts,amsthm,latexsym}
\usepackage{multicol}
\usepackage{graphicx}

\newtheorem*{theo*}{Theorem}
\newtheorem{theo}{Theorem}
\newtheorem{coro}[theo]{Corollary}
\newtheorem{prop}[theo]{Proposition}
\newtheorem{lemm}[theo]{Lemma}
\newtheorem*{lemm*}{Lemma}
\newtheorem{conj}{Conjecture}

\newtheorem*{clai*}{Claim}

\theoremstyle{definition}
\newtheorem{rema}{Remark}
\newtheorem{defi}[rema]{Definition}

   \def\PP{{\mathbb P}}
 \def\RR{{\mathbb R}}  
   
 \def\ZZ{{\mathbb Z}}

\def\La{\Lambda}

\def\cA{{\cal A}}    
   \def\cN{{\cal N}} 
   \def\cO{{\cal O}} \def\cU{{\cal U}}
    \def\cV{{\cal V}}
    
   \def\cR{{\cal R}} \def\cX{{\cal X}}

\def\set#1{\left\{\, #1 \,\right\}}
\def\tq{\,\mid\,}
\def\abs #1{\vert \,#1\, \vert\,}
\def\norm #1{\Vert \,#1\, \Vert\,}


\title{Star flows with singularities of different indices}
\author{Adriana da Luz}


\begin{document}

\maketitle


\begin{abstract}

It is known that a generic star vector field $X$ on a $3$ or $4$-dimensional
manifold is such that its chain recurrence classes are either  hyperbolic, or singular hyperbolic (\cite{MPP} and \cite{GSW}).
 Palis conjectured that every vector field must be approximated  either by singular hyperbolic vector fields or by  vector fields with homoclinic tangencies or heterodimensional cycles (associated to periodic orbits).
We give a counter example in dimension $5$ (and higher).

We present here an open set of   star  vector fields  on a $5$ -dimensional manifold for which two singular points
with different indices belong (robustly)
to the same chain recurrence class.
This prevents the class to be singular hyperbolic, showing that the results in \cite{MPP} can not be extended to higher dimensions and thus contradicting the conjecture by Palis.

\end{abstract}

\textbf{Mathematics Subject Classification:} AMS   37D30,    37D50

\textbf{Keywords}  singular hyperbolicity, dominated splitting, linear Poincar\'e  flow, star flows.
\newpage
\tableofcontents
\newpage
\section{Introduction}
The theory of dynamical systems has its origins in  the work of Poincar\'e while  studying the time continuous dynamical systems,(vector fields and their flows) associated with celestial mechanics.
Later on, we began to study discrete time dynamical systems, that is diffeomorphisms.
Both theories seem to be very related. Even more, there was an idea shared by many authors that is:

\emph{The dynamics of a non singular vector fields  in
dimension $n$ should look like the one of a diffeomorphism in dimension $n-1$}( see for instance \cite{Sm1})

However, this idea of translating results from one setting to the other does not always work quite as straightforwardly as expected. One of the possible situations where this happens is when we are dealing with vector fields with singularities (zeros of the vector field).
\vskip 2mm
The coexistence of singularities and regular orbits in indecomposable parts of the dynamics has lead to the fact that there are several areas in which the theory for vector fields is somewhat behind the theory for  diffeomorphisms.
 \vskip 2mm
  The aim of this paper will be to build and example showing new ways in which singularities do introduce extra difficulties in the vector field scenario.
\subsection{Diffeomorphisms, hyperbolicity and the Palis conjecture}


In the 60\' s, and after the famous work of Smale in \cite{Sm}, begun the study of a particular open set of diffeomorphism, that are such that the tangent space over some compact invariant subset,  splits into two invariant bundles (under the tangent dynamics). In the tangent dynamics vectors are uniformly expanded in one bundle and uniformly contracted in the other. This systems where called hyperbolic systems.
This class of diffeomorphisms was the focus of several works which arrived to a very complete description of their dynamics

The study of hyperbolic systems played a central roll in differentiable  dynamical systems. One of the reasons for this is that they are stable in the following sense: Any hyperbolic system has a $C^r$ neighborhood such that all diffeomorphisms in that neighborhood share all the same dynamical properties (all diffeomorphisms in the neighborhood are conjugated).

In \cite{PaSm} Palis and Smale conjectured that hyperbolicity characterizes stability, (it was conjectured for the $C^r$ topology but it was only solved for $r=1$). The sufficient condition was proven by \cite{R1} and  \cite{R2}. The necessary condition was proven by Ma\~{n}e in 1988 \cite{Ma2} and by Hayashi \cite{H} 1992.

Less informally,
 given a compact invariant set $K$ of a diffeomorphism $f$ we say that $f$ is \emph{hyperbolic or uniformly hyperbolic} on $K$, if \begin{itemize}\item  there is a continuous, invariant  splitting of the tangent space, in two spaces: $T_xM=E_x^s\oplus E_x^u$ \item the vectors are uniformly contracted in $E^s$ \item the vectors are uniformly expanded in $E^u$.\end{itemize}
  We can define an analogous notion for vector fields.

When we say that a diffeomorphism $f$ (or a flow) is hyperbolic we do not mean $K $ is the hole manifold in this text.  Rather  that $K$ is a compact invariant subset of the manifold called the chain recurrent set. 
This set holds the most relevant dynamical properties, we define it as follows.
 \begin{itemize}
 \item A point $x$ is \emph{chain recurrent} if, for any $\varepsilon>0$, there is an \emph{$\varepsilon$-pseudo orbit} from
 $x$ to $x$, that is,  a sequence $x=x_0, x_1\dots,x_k=x$, $k>0$ with $d(x_i, f(x_{i-1}))<\varepsilon$, for $i\in \{1,\dots, k\}$.
Equivalently $x$ is chain recurrent if  for any attracting region $U$ (an open set such that $f(U)\subset U$ ), the orbit of $x$ is either disjoint from $U$ or contained in it.
\item The set of chain recurrent points is the chain recurrent set that we note as  $\cR(f)$. 
 \item Two points $x,y$ in $\cR(f)$ are in the same \emph{chain recurrence class} if  for any $\varepsilon>0$, there are $\varepsilon$-pseudo orbits from $x$ to $y$ and from $y$ to $x$.
\end{itemize}

In this way we say that $f$ is hyperbolic if it is hyperbolic in $\cR(f)$.

 It is shown by Conley in \cite{Co} that this chain classes play the role of fundamental pieces of the dynamics, and the rest of the orbits, simply go from one of this pieces to the other.

We say a set $\La$ is maximal invariant in $U$ for a diffeomorphism $f$ if
$$\La=\bigcap_{n\in \ZZ} f^n(U)\,.$$

 One says that a system is \emph{star} if  all periodic orbits are hyperbolic in a robust fashion: every periodic orbit of every $C^1$-close system is hyperbolic.
As a step to proving the stability  conjecture, one proves that for a diffeomorphism, to be star  is equivalent to be hyperbolic
(an important step is done in \cite{Ma}  and has been completed in \cite{H}\label{star}).

However  hyperbolicity does not describe all systems  or even a dense set of the diffeomorphisms in the $C^1$ topology (see for instance Abraham Smale ' s example in \cite{ASm}). 

After this, Palis gave a series of conjectures with hopes of describing the dynamics of at least a dense set of diffeomorphisms far from hyperbolicity.

\begin{conj}[Palis density conjecture]
There is a dense open subset $\cO =\cO_1\cup\cO_2$ of $Diff^1(M)$ such
that  $f\in\cO_1$ satisfies the Axiom A without cycle (what we call here being hyperbolic), and there is a dense subset $D\subset \cO_2$ such that $ f\in D$
admits an hetero dimensional cycle or a homoclinic tangency.
\end{conj}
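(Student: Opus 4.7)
The Palis density conjecture is a celebrated open problem; here I sketch only the general strategy, which has produced the complete result in dimension two (Pujals--Sambarino) and all the partial progress known in higher dimensions (Wen, Bonatti--D\'{\i}az--Pujals, Crovisier, Gourmelon). The plan is to build the decomposition $\cO = \cO_1 \cup \cO_2$ as follows: let $\cO_2$ be the interior of the $C^1$-closure of the set of diffeomorphisms exhibiting a homoclinic tangency or a heterodimensional cycle associated to periodic orbits, and define $\cO_1 := \diff^1(M) \setminus \overline{\cO_2}$. Take $\cO$ to be $\cO_1 \cup \cO_2$, which is open by construction; density of $\cO$ will follow from the dichotomy below. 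The set $D$ of diffeomorphisms actually realising a tangency or cycle is dense in $\cO_2$ by definition, so the entire content is to show that $\cO_1$ admits an open dense subset of Axiom~A diffeomorphisms without cycles.

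Working inside $\cO_1$, I would first pass to a residual subset using Kupka--Smale genericity, Hayashi's connecting lemma and Pugh's closing lemma, so that for a generic $f \in \cO_1$ each chain recurrence class $C \subset \cR(f)$ is the Hausdorff limit of hyperbolic periodic orbits, all of which share a common stable index (this last point being precisely where the absence of heterodimensional cycles enters, via Gourmelon's perturbation lemmas and Crovisier--Sambarino--Yang). Next, since $f$ is also far from homoclinic tangencies, the Wen and Gan--Wen theory of extremal bundles supplies a dominated splitting $TM|_C = E \oplus F^c \oplus G$ with $E$ uniformly contracted and $G$ uniformly expanded, the central bundle $F^c$ being of dimension forced by the common periodic index (possibly trivial). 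The final step would be to upgrade $F^c$ into either $E$ or $G$ by a Pujals--Sambarino style argument, showing that any non-hyperbolic behaviour along $F^c$ can be perturbed into a tangency or a cycle, contradicting $f \in \cO_1$.

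The main obstacle, and the reason the conjecture remains open in general, is exactly this last upgrade from dominated splitting to uniform hyperbolicity. When $\dim F^c \leq 1$ the Pujals--Sambarino surface machinery can occasionally be adapted, but for $\dim F^c \geq 2$, or when $F^c$ sits between nontrivial $E$ and $G$ in higher-dimensional ambient manifolds, robust non-hyperbolic phenomena (neutral central behaviour, partial hyperbolicity with a two-dimensional centre, modulus of stability) may persist without yielding any tangency or cycle under a $C^1$-small perturbation. This is precisely the obstruction that the construction in the present paper will exploit in the vector field setting, so the conjecture above should be read as motivation: the rest of the paper does not prove it, but instead provides a counterexample to its natural flow analogue in dimension five and higher.
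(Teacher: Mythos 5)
This statement is a \emph{conjecture} in the paper, not a theorem: the text offers no proof, only the remarks that Pujals--Sambarino settled the surface case and that \cite{C1} and \cite{CP} are partial steps in higher dimension, before the paper turns to constructing a counterexample to the flow analogue. You correctly recognised that the statement is open, and your sketch of the known strategy (isolating the tangency/cycle locus as $\cO_2$, extracting a dominated splitting on chain classes far from tangencies and cycles, and the unresolved step of promoting the central bundle to hyperbolicity) is an accurate account of the literature the paper cites. Since there is no proof in the paper to compare against, the only correct answer is exactly the one you gave: acknowledge the conjecture is open, and note that the present paper's contribution is a counterexample to its strong singular-flow version rather than any progress on the diffeomorphism statement itself.
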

This conjecture has been proved for surface diffeomorphisms by Pujals and Sambarino in \cite{PS}. In
higher dimensions, two important steps in the positive direction have been done in \cite{C1} and \cite{CP}

In order to understand better the systems that are not hyperbolic,  when a  systems has some property that persist under small perturbation,  we aim to find (weaker)
structures that limit the effect of the small perturbations.

The weakest of this defined structures for diffeomorphisms was introduced by Ma\~ n\'e and Liao and it is
called \emph{dominated splitting}:
 \begin{defi} Let $f\colon M\to M$ be a diffeomorphism of a Riemannian manifold $M$  and $K\subset M$ a compact invariant set of $f$.
 A splitting $T_x M=E(x)\oplus F(x)$, for $x\in K$, is called dominated if
 \begin{itemize}
 \item $dim(E(x))$ is independent of $x\in K$ and this dimension is called the $s$-index of the splitting;
  \item it is $Df$-invariant:  $E(f(x))=Df(E(x))$ and $F(f(x))=Df(F(x))$ for every $x\in K$;
  \item there is $n>0$ so that for every $x$ in $K$ and every  unit vectors $u\in E(x)$ and $v\in F(x)$ one has
  $$\|Df^n(u)\|\leq \frac 12\|Df^n(v)\|.$$
 \end{itemize}
One denotes $TM|_K=E\oplus_{_<}F$ the dominated splitting.

 \end{defi}


Some examples of the relation between this structures and the robustness of the dynamical properties can be found in
the long sequence of papers, starting with the work \cite{Ma} of Ma\~n\'e, and then \cite{DPU,BDP, BDV} and the most complete result in this spirit, in \cite {BB}. They show that the dominated splittings is the unique obstruction
for mixing the Lyapunov exponents of periodic orbits (and therefore not having any robust dynamical property ), by $C^1$-small perturbation of the diffeomorphism.

\subsection{Singular flows}

However, as noted before, vector fields can have   singularities (zero of the vector field). This makes the translation of this results to the flow setting, more complicated. In fact, the $C^1$ stability conjecture for flows was proven by Hayashi (\cite{H2}) almost ten years after the result for diffeomorphisms.

\vspace{2mm}
\emph{The hyperbolic splitting of a singularity and the one over a periodic orbit are different and a priori not compatible.}
\vspace{2mm}

This is due to the fact that the vector associated  to a point  of a regular orbit, by the vector field, spans a space in the tangent space that does not contract or expand vectors. As opposed to this, singularities are  zeros of the vector field , and therefore there is no space spanned by the vector field. 

We cannot avoid this problem, since the singularities might be accumulated, in a robust way, by regular chain  recurrent orbits.

The first example with this behavior has been indicated by Lorenz in \cite{Lo} under  numerical evidences.  Then \cite{GuWi} constructs
a $C^1$-open set of vector fields in a $3$-manifold, having a topological transitive attractor containing  periodic orbits (that are all hyperbolic) and one
singularity.  The examples in \cite{GuWi} are known as the geometric Lorenz attractor.

For non singular star flows in \cite{GW} Gan and Wen show that star flows away from singularities are in fact hyperbolic but 

\begin{center}\emph{the Lorenz attractor is also an example of a robustly non-hyperbolic  star flow,  showing that the result in \cite{H} is not true anymore for flows.}\end{center}
\vspace{0,02cm} 
In dimension $3$ the difficulties introduced by the robust coexistence of singularities and periodic orbits are now almost fully understood.
In particular, Morales, Pac\'ifico and Pujals (see \cite{MPP} ) defined the notion of \emph{singular hyperbolicity}, which requires that the chain recurrence classes admit a dominated splitting
in two bundles, one being uniformly contracted (resp. expanded) and the other being volume expanding (resp. volume contracting).

 They prove that, for  a $C^1$-generic star flows on $3$-manifolds,
every chain recurrence class is singular hyperbolic.
In \cite{BaMo} the authors built a star flow on a $3$-manifold having a chain recurrence class which is not singular hyperbolic, showing that the previous result cannot be improved.

In the context of flows, Palis conjecture is formulated as follows:
\begin{conj}[Palis \cite{Pa} (conjecture 5)]\label{conjw}
In any dimension, every flow(vector field)can be $C^r$ approximated by a hyperbolic one or by one displaying a homoclinic tangency or a singular cycle or a heterodimensional cycle.
\end{conj}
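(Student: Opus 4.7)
The plan is to argue by a trichotomy on the structure of chain recurrence classes. Fix an arbitrary $X$ and, by a first small $C^1$-perturbation, replace it with a $C^1$-generic $X'$ for which the usual generic properties hold: hyperbolic periodic orbits are dense in the chain recurrent set, chain recurrence classes of periodic points coincide with their homoclinic classes, and the ergodic closing lemma of Ma\~n\'e applies. If $X'$ is hyperbolic we are done, so assume otherwise and pick a chain recurrence class $C$ witnessing the failure. We split into whether $C$ contains a singularity or not.

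If $C$ contains no singularity, we work with the linear Poincar\'e flow on $C$. Approximating invariant measures by periodic orbits and analyzing their spectra, we ask whether a uniform dominated splitting of constant index bundles exists on all these periodic orbits. If so, one tries to promote the splitting to hyperbolicity; if not, then either there are two periodic orbits in $C$ with different stable indices, in which case a Hayashi connecting lemma applied to their invariant manifolds produces a heterodimensional cycle after a further small perturbation, or the two bundles at a single periodic orbit fail to be dominated and a Franks-type perturbation in a flow-box brings them into tangency, yielding a homoclinic tangency. If $C$ contains a singularity $\sigma$, chain recurrence provides $\varepsilon$-pseudo orbits from $\sigma$ back to $\sigma$ through the regular part of $C$; Hayashi's connecting lemma for flows then converts such a pseudo orbit into a true connection between a branch of the unstable manifold of $\sigma$ and the stable manifold of $\sigma$ (or of a nearby saddle in $C$), produced by an arbitrarily small $C^1$-perturbation supported away from $\sigma$ and from any regular periodic orbit already constrained by step one. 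The resulting perturbed flow displays a singular cycle, the third alternative in the conjecture.

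The main obstacle, and what makes the conjecture so resistant in high dimension, is precisely the phenomenon that this paper will exploit as a counter-example to a weaker version of Palis' conjecture. When $C$ contains two singularities $\sigma_1,\sigma_2$ of distinct Morse indices robustly, the extended linear Poincar\'e flow along the regular orbits in $C$ admits no obviously consistent choice of stable/unstable index compatible with \emph{both} singularities. Step one of the plan above then becomes ambiguous: the perturbation designed to promote a near-domination to a tangency or heterodimensional cycle at a periodic orbit may be neither compatible with, nor safely separable from, the constraints imposed near $\sigma_1$ and $\sigma_2$. Step two is even more delicate: one must choose which singularity to close a cycle through, and ensure that the connecting-lemma perturbation respects the strong stable and unstable laminations of both $\sigma_i$ simultaneously. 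Resolving this would require both an enlarged notion of dominated splitting for the extended linear Poincar\'e flow capable of handling several singular indices and a connecting lemma adapted to that structure, and it is exactly on this point that the example of the paper demonstrates a genuine new obstacle that does not appear in dimension $\leq 4$.
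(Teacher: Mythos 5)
You have tried to prove a statement that the paper does not prove and could not prove: Conjecture~\ref{conjw} is quoted from Palis as an open conjecture, known only in the $C^1$ topology on $3$-manifolds (Arroyo--Rodriguez-Hertz \cite{AH}, and \cite{CY} for the strong version), and the whole point of the paper is to build a counterexample to the \emph{strong} version (Conjecture~\ref{conj}) in dimension $\geq 5$, which shows precisely that the kind of program you outline does not go through in higher dimension. So there is no proof in the paper to compare yours with, and your text should be read as a strategy sketch, not a proof.

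As a proof it has concrete gaps. First, the conjecture is stated for $C^r$ approximation, while every tool you invoke (generic dichotomies, the ergodic closing lemma, Hayashi's connecting lemma, Franks-type perturbations along the linear Poincar\'e flow) is strictly $C^1$; even in dimension $3$ only the $C^1$ case is known. Second, in the nonsingular case your trichotomy is not exhaustive: a chain class can carry a dominated splitting for the linear Poincar\'e flow with all periodic orbits hyperbolic of the same index and still fail to be hyperbolic; ``promoting the splitting to hyperbolicity'' is exactly the hard step, which in low dimension requires the full strength of Pujals--Sambarino-type theorems and is open in general, and the passage from ``no domination'' to a homoclinic tangency needs quantified lack of domination \emph{along periodic orbits} (Wen, Gourmelon), not merely on the class. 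Third, in the singular case, turning $\varepsilon$-pseudo orbits through $\sigma$ into a true singular cycle requires the connecting lemma for pseudo-orbits and an argument that the perturbation does not destroy the recurrence you are using; and when the class robustly contains singularities of different indices --- the situation constructed in this paper --- your own third paragraph concedes that you do not know how to choose the cycle or control the perturbation. Admitting the missing step is honest, but it means the statement remains unproved.
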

This conjecture was proven for the $C^1$ topology and in a three dimensional manifold by Arroyo and Rodriguez-Hertz in \cite{AH}

Since all information regarding coexistence of singularities with periodic orbits in Conjecture \ref{conjw} falls under the category of singular cycles, the same article gives a stronger version of this conjecture where the behavior of this singular cycles would also be described (see the line following conjecture 6)
\begin{conj}[Palis \cite{Pa} (conjecture 5 strong version)]\label{conj}
In any dimension, every flow (vector field) can be $C^r$ approximated by a singular hyperbolic one or by one displaying a homoclinic tangency or an heterodimensional cycle (associated to periodic points).
\end{conj}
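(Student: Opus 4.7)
The plan is to \emph{refute} rather than prove Conjecture \ref{conj}, in line with the paper's abstract: the goal is to exhibit a $C^1$-open set of star vector fields on a $5$-manifold whose chain recurrence class contains two singularities of distinct indices. Such a class cannot be singular hyperbolic, because any dominated splitting over the class would force the stable indices of its hyperbolic singularities to be compatible with a single continuous decomposition $E\oplus F$. Moreover, being star (robustly) precludes nearby homoclinic tangencies and, crucially, heterodimensional cycles associated to periodic points; the only cycles remaining will involve the singularities themselves, which is the case the strong version of the conjecture explicitly forbids. An open such example would therefore contradict the statement.

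The construction I would attempt is a ``doubled and lifted'' geometric Lorenz. Start with two local models near hyperbolic singularities $\sigma_1,\sigma_2$ of different stable indices, each equipped with a cross-section and a Lorenz-type return map adapted to the linear behavior at the singularity. Glue these cross-sections so that the unstable manifold of $\sigma_1$ meets the stable manifold of $\sigma_2$ transversally in a robust way, and symmetrically for $\sigma_2$. Working in dimension $5$ provides enough room to host both hyperbolic linear splittings simultaneously: the extra directions beyond the classical $3$-dimensional Lorenz setting are used to accommodate the two different indices in compatible local coordinates, while the return maps preserve expansion on one central direction and contraction on another. Robust chain recurrence of $\sigma_1$ and $\sigma_2$ in the same class is then a consequence of the transverse intersections of invariant manifolds, which persist under $C^1$-perturbation.

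The hard part will be establishing the star condition throughout a $C^1$-neighborhood. I would work on the linear Poincar\'e flow rather than on $TM$, since singularities prevent a uniform splitting of the full tangent bundle. The goal is to construct a dominated splitting of the Poincar\'e flow over the regular chain recurrent set in a neighborhood of the class, fine enough that every periodic orbit of every nearby vector field must be hyperbolic of a well-defined constant index. A Ma\~n\'e-type argument then rules out non-hyperbolic periodic orbits: if some periodic orbit were non-hyperbolic for a $C^1$-close vector field, a Franks-type perturbation on the Poincar\'e flow would flip the sign of a Lyapunov exponent and contradict robustness. Obtaining the dominated splitting on the Poincar\'e flow while the class contains two different-index singularities is the central technical difficulty, because the domination must degenerate compatibly as orbits approach either $\sigma_i$, severely restricting how the return maps may interact with the local linear data. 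I would control this by choosing local coordinates carefully near each $\sigma_i$ and patching the estimates across the gluing region, using the scaled or extended versions of the linear Poincar\'e flow that have been developed in the star-flow literature to bridge regular and singular behavior.
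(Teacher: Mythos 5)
Your overall strategy is the right one (the paper refutes the conjecture by exhibiting a $C^1$-open set of star flows on a $5$-manifold with two different-index singularities in one chain class), but the mechanism you propose for making the two singularities robustly chain related does not work, and this is precisely where the real difficulty lies. In $M^5$ the singularity $\sigma_1$ has a $3$-dimensional stable manifold and $\sigma_2$ a $2$-dimensional one, so $W^u(\sigma_1)$ and $W^s(\sigma_2)$ are both $2$-dimensional; since $2+2<5$, they cannot meet transversally in a nonempty set, and any connection from the ``attractor side'' to the ``repeller side'' through the singularities is necessarily fragile. Your claim that ``robust chain recurrence of $\sigma_1$ and $\sigma_2$ \ldots is a consequence of the transverse intersections of invariant manifolds'' therefore fails on a dimension count. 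In the paper the robustness is carried instead by the \emph{periodic orbits}: all periodic orbits of the construction have the same index (stable and unstable manifolds of dimension $3$ each, so $3+3>5$), they are dense in the Lorenz attractor $L_a$ and in the Lorenz repeller $L_r$, and after a connecting-lemma perturbation they become robustly homoclinically related; the singularities then sit robustly in that class because each is accumulated by these periodic orbits. A related omission: a ``symmetric'' gluing of two different-index singular pieces risks producing a robust heterodimensional cycle among periodic orbits, which would make the example \emph{consistent} with the conjecture rather than a counterexample; ensuring that every periodic orbit has the same index is an essential design constraint you do not address.

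You also do not supply any device playing the role of the paper's actual construction: a $3$-dimensional attractor--repeller pair in $S^3$ (a geometric Lorenz attractor and its time reversal, joined by a plug so that wandering orbits go only from repeller to attractor), multiplied by a vector field on $\mathbb{RP}^2$ with a strongly hyperbolic saddle whose saddle connection provides a return path, followed by a surgery (an isotopy $H$ supported in a flow box) that routes exactly one orbit from the unstable side of $L_a$ back to the stable side of $L_r$. Without some such mechanism the two pieces remain in different chain classes. On the star condition your instinct is closer to the mark: one does need a singular-adapted Poincar\'e flow, but a dominated splitting plus a Franks-type argument is not what is used; the paper verifies uniform \emph{hyperbolicity} of a reparametrized extended linear Poincar\'e flow over the extended maximal invariant set (multisingular hyperbolicity in the sense of Bonatti--da Luz), which is a robust property and is already known to imply the star condition.
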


In \cite{CY} Crovisier and Yang give a proof of this conjecture also for the $C^1$ topology and in a three dimensional manifold. Both of this conjectures remained open until now, and the aim of this paper is to give a counter example to Conjecture \ref{conj} in dimension bigger or equal to 5.

The notion of singular hyperbolicity defined by \cite{MPP} in dimension $3$ admits a straightforward generalization to higher dimensions:

Each chain recurrence class admits a dominated splitting
in two bundles, one being uniformly contracted (resp. expanded) and the other being  area expanding in any two dimensional subspace  (resp. area contracting).
Other possible generalizations of this notion can be found in \cite{MM}

If the chain recurrence set of a vector field $X$  can be covered by filtrating sets $U_i$ in which the maximal invariant set $\La_i$ is singular hyperbolic, then
$X$ is a star flow.  Conversely, \cite{GLW} and \cite{GWZ}  prove that this property characterizes the  generic star flows on  $4$-manifolds and for robustly transitive singular sets.

In \cite{GSW} the authors prove the singular hyperbolicity of generic star flows in any dimensions assuming
an extra property:  if two singularities are in the same chain recurrence class then
they must have the same \emph{$s$-index} (dimension of the stable manifold). Indeed, the  singular hyperbolicity implies directly this extra property.

 \vspace{0,2cm}
 \emph{The definition of singular hyperbolicity forbids  a priory the coexistence of singularities of different indexes in the same chain recurrence  class.}
 \vspace{0,2cm}

If one assumes that the  chain recurrence class that we are looking at is robustly transitive, then \cite{GWZ}  prove that the condition on the index of the singularities is always verified.
It was  unknown weather the condition on the index is still verified after removing the hypothesis of robust transitivity. The following conjecture appears in \cite{GWZ} and  \cite{GSW}  and was proven in   \cite{GSW}  for an open and dense set of vector fields in dimension 4.  
\begin{conj}\label{conj2}For every star vector field the chain recurrent set
is singular hyperbolic and consists of finitely many chain recurrent classes.
\end{conj}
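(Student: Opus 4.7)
The plan is to establish the two assertions separately: (a) finiteness of the chain recurrence classes, and (b) singular hyperbolicity of each class. First, I would use Conley's fundamental theorem to decompose the chain recurrent set $\cR(X)$ into pairwise disjoint chain classes, and then reduce (a) to showing that no chain class can accumulate on others. The star hypothesis gives uniform hyperbolicity constants for all periodic orbits of all nearby flows; a putative infinite sequence of pairwise disjoint chain classes would, via Liao-type shadowing, produce periodic orbits whose hyperbolicity strength degenerates, contradicting the star condition after a $C^1$-small perturbation.

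For (b), I would split the argument into the regular and the singular setting. Away from $\sing$, the theorem of Gan--Wen already gives that any compact invariant set of a star flow disjoint from the singularities is uniformly hyperbolic, so the point is to understand a chain class $C$ that does contain singularities. The natural tool is the linear Poincar\'e flow $\pflow$ on the normal bundle over $C\setminus \sing$. The plan is to combine Liao's sifting lemma with the star property to produce a dominated splitting for $\pflow$ of a uniform index on the regular part of $C$, then extend this splitting across the singularities by taking limits in the flag/Grassmann bundles and gluing with the hyperbolic splittings at the zeros. The extremal bundles should be shown uniformly contracted/expanded by standard cone-field arguments, while area expansion/contraction in the central bundle would follow from a Pliss-type selection along orbits where the linear Poincar\'e flow fails to be hyperbolic, together with the star condition that forbids weak periodic behavior.

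The main obstacle, and the step on which the entire program hinges, is showing that any two singularities $\sigma_1,\sigma_2$ lying in the same chain class $C$ must have the same $s$-index. Indeed, the very definition of singular hyperbolicity (a single dominated splitting $E^{ss}\oplus E^{cu}$ or $E^{cs}\oplus E^{uu}$) forces this equality, so without it (b) cannot even be formulated. The strategy I would pursue here is the one that succeeded in the robustly transitive case in \cite{GWZ}: assume $\sigma_1,\sigma_2$ have different indices and try to realize, by a small $C^1$ perturbation, a connection between their invariant manifolds that can then be combined with the approximating periodic orbits (given by Hayashi-type connecting lemmas applied inside $C$) into a robust heterodimensional cycle between periodic orbits, contradicting the star property. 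The hard part is upgrading this argument from the robustly transitive framework, where one has enough freedom to perform connections using dense orbits, to an arbitrary chain class; one would need a general pseudo-orbit-to-orbit connecting mechanism in chain classes of star flows that preserves the participation of the singularities of different indices.

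Finally, once the uniform-index conclusion is in hand, the extension of the dominated splitting across $\sing\cap C$ and the verification of the area expansion/contraction condition proceed as in \cite{MPP} and \cite{GSW}, yielding singular hyperbolicity of $C$; combined with (a) this gives the full statement. I expect the perturbation step that creates heterodimensional cycles from same-class singularities of different indices to be the decisive difficulty, and in fact it is precisely the step where a counterexample could live.
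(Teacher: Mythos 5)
The statement you are trying to prove is a conjecture that this paper \emph{refutes}: there is no proof of it here, and Theorem~\ref{t.example} is an explicit $C^1$-open counterexample in dimension $5$ (hence the conjecture can only survive in low dimension: dimension $3$ by \cite{MPP}, generic star flows in dimension $4$ by \cite{GSW}, and robustly transitive singular sets by \cite{GWZ}). The decisive gap in your plan is exactly the step you flagged as the ``main obstacle'': the claim that two singularities in the same chain class of a star flow must have equal $s$-index. That claim is false once one leaves the robustly transitive setting. In the construction of this paper, a Lorenz attractor $L_a$ (containing $\sigma_a$ with $3$-dimensional stable manifold) and a Lorenz repeller $L_r$ (containing $\sigma_r$ with $2$-dimensional stable manifold) inside $S^3\times\RR\PP^2$ are joined, first by a single connecting orbit and then, after a $C^1$-small perturbation using the connecting lemma of \cite{BC}, by robust homoclinic relations between periodic orbits of $L_a$ and $L_r$. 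Your proposed contradiction---upgrading a connection between singularities of different indices to a heterodimensional cycle between periodic orbits, violating the star condition---cannot be reached: in the example \emph{all} periodic orbits have the same index (the index jump is carried only by the singularities, whose extra contracting/expanding directions are escaping and hence excluded from the extended maximal invariant set), so no heterodimensional cycle between periodic orbits is ever produced, and the class stays away from tangencies as well.

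Why does the star property nevertheless hold without singular hyperbolicity? Because the hyperbolic structure that certifies it is not a splitting $E^{ss}\oplus E^{cu}$ (or $E^{cs}\oplus E^{uu}$) of the tangent bundle, but hyperbolicity of the \emph{reparametrized} linear Poincar\'e flow $\Psi^t=h(L,t)\,\psi^t_{\cN}$ over the extended maximal invariant set $B(X,U)$ (strong multisingular hyperbolicity, following \cite{BdL}), where the factor $h(L,t)=\|D\phi^t|_L\|$ compensates the incompatibility between the splittings at the two singularities. Corollary~\ref{c.robusthyp} then gives the star property, while the finest dominated splitting of the tangent flow has a center of dimension at least three, so no singular (or sectional) hyperbolic structure can hold on this class. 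Consequently your steps (b) and the final ``extension across the singularities as in \cite{MPP} and \cite{GSW}'' cannot be carried out in dimension $\geq 5$: the uniform-index lemma they rest on is exactly where the counterexample lives, as you suspected.
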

From the example of $\cite{BaMo}$ it was expected that this conjecture would only hold in an open and dense set. 
This conjecture is very closely related to the conjecture by Palis ([\ref{conj}]): open and densely, star flows do not have homoclinic tangencies or heterodimensional cycles, therefore if Palis conjecture were true then this conjecture should be true as well.

One of the difficulties that persisted for understanding this questions in higher dimension is that
there are very little examples illustrating what are the possibilities. Let us mention \cite{BLY} which builds  a flow having a robustly chain recurrent attractor containing saddles of different indices.

The contents of this work aim to contribute in this direction. We give a negative answer to the conjecture by Palis [ Conjecture \ref{conj}] and to Conjecture [\ref{conj2}] in dimension grater or equal than 5. We build a robust counter example of a star flow  such that  no  known notion of  singular or sectional hyperbolicity would be satisfied, but it is away from homoclinic tangencies and heterodimensional cycles.
In fact, this example has a center space (a space that does not contract or expand) of dimension at least three and this implies that any partial hyperbolic structure on the tangent space that this example may satisfy does not imply the star condition. Therefore a characterization via hyperbolic structures of the tangent space for star flows is impossible even open and densely.

More over, it has all periodic orbits robustly hyperbolic and of the same hyperbolic index, but the finest dominated splitting (if any) would have a center space of dimension at least 3, where the Lyapunov exponents cannot be mixed by perturbation, which show that results like \cite{BDP} or \cite{BB} do not hold in a direct translation to flows.

This  star flow will be  in a $5$ dimensional manifold, and has two singularities of different
indices which belong to the same chain recurrence class, robustly.

\begin{theo}\label{t.example} Let $M$ be the manifold $S^3\times\RR\PP^2$. There is a $C^1$-open set $\cU$ of $\cX^1(M)$ so that every $X\in \cU$ is
such that there is an open set $U$,
\begin{itemize}
  \item  such that $X\in \cU$ is a star flow in $ U$.
  \item the maximal invariant set in $U$ is a chain recurrence class $C$
  \item  $C$ has  two singularities $\sigma_1$ and $\sigma_2$ such that the stable manifold of $\sigma_1$ is 3 dimensional and the stable manifold of $\sigma_2$ is 2 dimensional
  \item these singularities are such that  $\sigma_1$ and $\sigma_2$  belong to $\overline{Per(X)}$

\end{itemize}

\end{theo}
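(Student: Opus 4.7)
The plan is to build $X\in\cX^1(M)$ explicitly, assembled from three pieces: a local hyperbolic model at each of the two singularities with the prescribed indices, a geometric ``Lorenz-type'' device producing a robust family of periodic orbits that accumulate each singularity, and a global gluing that forces both singularities into the same chain recurrence class. At $\sigma_1$ I would fix a linear model with three contracting and two expanding eigenvalues, and at $\sigma_2$ two contracting and three expanding, with the eigenvalues tuned so that the strongest contraction and expansion rates match across both singularities. This will allow a $C^1$-robust dominated splitting of the form $E^{ss}\oplus E^c\oplus E^{uu}$ on the maximal invariant set in the candidate open set $U$, with $E^{ss}$ and $E^{uu}$ one-dimensional and uniformly hyperbolic, and $E^c$ three-dimensional (the center space of dimension at least three alluded to in the introduction).

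Next I would build the global connections. A cross-section $\Si$ transverse to the flow in a filtrating neighborhood of $\{\sigma_1,\sigma_2\}$ will carry a return map playing the role of the one-dimensional map in the Lorenz template, but now higher-dimensional and split by the branches of $W^u(\sigma_1)\setminus W^{ss}(\sigma_1)$ and $W^u(\sigma_2)\setminus W^{ss}(\sigma_2)$. By choosing the return map so that each branch is folded into the strong-stable foliation of $\Si$ and sent transversally across the stable manifolds of both singularities, one obtains robust transversal heteroclinic intersections $W^u(\sigma_i)\pitchfork W^s(\sigma_j)$ for $i,j\in\{1,2\}$, as well as robust periodic orbits accumulating both. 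The topology of $S^3\times\RR\PP^2$, and in particular the non-orientability of $\RR\PP^2$, provides the room needed to realize these transversal intersections without destroying the filtrating neighborhood, while the $S^3$ factor carries the Lorenz-like mechanism itself.

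With this construction in place, the statements that the maximal invariant set in $U$ is chain transitive, that it contains both $\sigma_1$ and $\sigma_2$ with the prescribed stable dimensions, and that both are accumulated by periodic orbits, are then mostly formal: chain transitivity follows because every branch of the return map is spread across the whole section; the singularities persist with their indices under $C^1$-small perturbations because the local models are hyperbolic; and the periodic orbits accumulating each are produced by the Lorenz-like return mechanism.

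The hard part is verifying the star property in $U$. The standard singular/sectional hyperbolicity argument does \emph{not} apply because the singularities have different indices and $E^c$ is three-dimensional, allowing in principle mixing of Lyapunov exponents. The plan is to analyze the linear Poincar\'e flow restricted to $E^c$, decomposing the orbit of any periodic point in $U$ into pieces passing near $\sigma_1$, near $\sigma_2$, and through a ``far'' region with bounded geometry. Explicit expansion and contraction rates from the linear models, together with the bounded distortion of the far segments, must be shown to combine so that every periodic return has a hyperbolic Poincar\'e map. Choosing the eigenvalues at each singularity and the global gluing identifications so that this hyperbolicity holds simultaneously for every possible concatenation of passages near either singularity, and persists under $C^1$-perturbation of $X$, is the main technical obstacle of the proof.
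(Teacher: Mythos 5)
There is a genuine gap at the heart of your second step. You want robust transversal heteroclinic intersections $W^u(\sigma_i)\pitchfork W^s(\sigma_j)$ for all $i,j\in\{1,2\}$, but this is dimensionally impossible in one direction: with $\dim W^s(\sigma_1)=3$ and $\dim W^s(\sigma_2)=2$ you have $\dim W^u(\sigma_1)+\dim W^s(\sigma_2)=2+2=4<5$, so $W^u(\sigma_1)$ and $W^s(\sigma_2)$ cannot meet transversally at all; any connection from $\sigma_1$ to $\sigma_2$ is a non-generic coincidence that a $C^1$-small perturbation destroys. This is exactly the difficulty the paper has to work around: it first builds in $S^3$ a Lorenz attractor $L_a$ (containing $\sigma_a$) and a time-reversed Lorenz repeller $L_r$ (containing $\sigma_r$) joined by wandering orbits in the robust direction only, then multiplies by a vector field on $\RR\PP^2$ whose saddle has a homoclinic saddle connection (possible only because of non-orientability), and composes with an isotopy $H$ in a flow box so that precisely \emph{one} orbit closes the loop from the attractor side back to the repeller side. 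The resulting recurrence is then made robust not through the singularities but through a connecting-lemma perturbation creating transverse homoclinic relations between \emph{periodic orbits} of $L_a$ and $L_r$, which all have the same index and whose invariant manifolds do have complementary dimensions. Your proposal, as written, has no mechanism that closes the recurrence robustly.

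The second serious issue is the star property. Your plan of decomposing periodic orbits into passages near $\sigma_1$, near $\sigma_2$, and far segments, and then tuning eigenvalues, is a description of the difficulty rather than a solution: near the singularities the transition times are unbounded and the ordinary linear Poincar\'e flow degenerates, and hyperbolicity of each individual periodic Poincar\'e map does not give the uniform, perturbation-robust hyperbolicity that the star condition requires. The paper resolves this with a specific device you do not have: the \emph{reparametrized} extended linear Poincar\'e flow $\Psi^t=h(L,t)\,\psi^t_{\cN}$ over the extended maximal invariant set $B(X,U)$ (which adds the central projective directions over the singularities), together with the quantitative constraints $\sqrt2<\lambda^u<2$, $4<-\lambda^{ss}<5$ on the Lorenz pieces and $\lambda_{uuu}\gg6$ on the $\RR\PP^2$ saddle that make $\Psi^t$ uniformly hyperbolic; the star property is then imported from the result that multisingular hyperbolicity implies star. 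Without an argument of this type, the ``main technical obstacle'' you identify remains unaddressed, and with the transversality claim above being false, the construction as proposed does not go through.
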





In order to prove that the example we construct is actually a star flow, we need some tool that allows us to detect the robust hyperbolicity of the periodic orbits without any information of the neighboring vector fields.
For this we define a hyperbolic structure that we call \emph{strong multisingular hyperbolicity} which is a particular case of the \emph{ multisingular hyperbolicity} defined in \cite{BdL} that is a sufficient condition to be a star flow.
We then construct our example so that it is strong multisingular hyperbolic, and therefore a star flow.

The hyperbolic structure we will define does not lie on the
tangent bundle, but in the normal bundle with the  linear Poincar\'e flow.   However the linear Poincar\'e flow is only defined far from the singularities, and therefore it cannot be used directly for understanding
our example.

In \cite{GLW},  the authors  define the notion of \emph{extended linear Poincar\'e flow} defined on some sort of blow-up of the singularities.
The notion of \emph{strong multisingular hyperbolicity} will be expressed as  the hyperbolicity of a  reparametrization of this extended linear Poincar\'e flow,
over a well chosen extension of the chain  recurrence set.

Since we do not have any information of the neighboring vector fields, we will need to extend the  linear Poincar\'e flow to some set that is interesting to us from the dynamical point of view, that varies upper semi continuously with the vector field, but that does not depend on knowing information from the neighborhood of the vector field. Therefore we will need a different notion as the one defined in \cite{GLW}. We will use the notion of central space as defined in \cite{BdL}.

\section{Basic definitions and preliminaries}
\subsection{Chain recurrence classes and filtrating neighbourhoods }

The following notions and theorems are due to Conley \cite{Co} and they can be found in several other references (for example \cite{AN}).
\begin{itemize}
  \item We say that pair of sequences $\set{x_i}_{0\leq i\leq k}$ and  $\set{t_i}_{0\leq i\leq k-1}$, $k\geq 1$,  is an \emph{ $\varepsilon$-pseudo orbit from $x_0$ to $x_k$} for a flow $\phi$,
  if for every $0\leq i \leq k-1$ one has
  $$ t_i\geq 1 \mbox{ and }d(x_{i+1},\phi^{t_i}(x_i))<\varepsilon.$$

  \item A compact invariant set $\Lambda$ is called \emph{chain transitive} if    for any $\varepsilon > 0$, and for any $x, y \in\Lambda$
there is an $\varepsilon$-pseudo orbit from $x$ to $y$.
  \item We say that $x, y \in M$ are chain related if,  for every $\varepsilon>0$, there are $\varepsilon$-pseudo orbits form $x$ to $y$ and from $y$ to $x$. This is an equivalence relation.
\item  We say that $x\in M$ is \emph{chain recurrent} if  for every $\varepsilon>0$, there is a non trivial $\varepsilon$-pseudo orbit from $x$ to $x$. We call the set of chain recurrent points, the\emph{ Chain recurrent set}
and we note it  $\cR(f)$. The equivalent classes of this equivalence relation are called \emph{ chain recurrence classes}.
\end{itemize}

\begin{defi}
\begin{itemize} \item An \emph{attracting region} (also called \emph{trapping region} by some authors) is a compact set $U$ so that $\phi^t(U)$ is contained in the interior
of $U$ for every $t>0$. The maximal invariant set in an attracting region is called an \emph{attracting  set}.  A repelling region is an attracting region for $-X$, and the maximal invariant set is called a repeller.

\item A \emph{filtrating region} is the intersection of an attracting region with a repelling region.

\item Let $C$ be a chain recurrence class of $M$ for the flow $\phi$.
A \emph{filtrating neighbourhood } of $C$ is a (compact) neighbourhood which is a filtrating region.
\end{itemize}
\end{defi}

The following is a corollary of the fundamental theorem of dynamical systems  \cite{Co}.
\begin{coro}\cite{Co}
Let $X$  be a $C^1$-vector field on a compact manifold $M$. Every chain class $C$  of $X$ admits a  basis of filtrating neighbourhoods, that is, every neighbourhood
of $C$ contains a filtrating neighbourhood of $C$.
\end{coro}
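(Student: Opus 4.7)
The corollary is a standard consequence of Conley's fundamental theorem of dynamical systems, which supplies enough attracting and repelling regions to separate distinct chain recurrence classes. My strategy is, given an open neighbourhood $V$ of $C$, to build for each $y\in M\setminus V$ a filtrating region containing $C$ but avoiding $y$, and then to take a finite intersection using compactness.

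\emph{Step 1 (separation of classes).} By Conley's theorem, any two distinct chain classes $C,C'$ can be separated by an attracting region $A$: either $C\subset\mathrm{int}(A)$ and $A\cap C'=\emptyset$, or the symmetric statement holds with $C$ and $C'$ swapped. In the symmetric case the dual repelling region $A^{*}=\{x\in M:\omega(x)\cap A=\emptyset\}$ satisfies $C\subset\mathrm{int}(A^{*})$ and $A^{*}\cap C'=\emptyset$. In either case one obtains a filtrating region containing $C$ and disjoint from $C'$ (an attracting region is filtrating by intersection with $M$, and similarly a repelling region).

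\emph{Step 2 (treatment of a single point outside $C$).} I take $y\notin C$ and build a filtrating region $F_{y}$ containing $C$ in its interior and missing $y$. If $y$ is chain recurrent it lies in some class $C'\ne C$ and Step 1 applies directly. If $y$ is not chain recurrent I first claim that $\omega(y)\not\subset C$ or $\alpha(y)\not\subset C$: otherwise, picking $x\in\omega(y)$ and $x'\in\alpha(y)$, I can concatenate a true orbit segment of $y$ approaching $x$, an $\varepsilon$-pseudo-orbit inside $C$ from $x$ to $x'$, and a true orbit segment from near $x'$ flowing to $y$, producing arbitrarily fine pseudo-orbits from $y$ to $y$ through $C$, forcing $y\in C$, a contradiction. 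Say then $\omega(y)\not\subset C$ and pick $z\in\omega(y)$ in a chain class $C'\ne C$; apply Step 1 to $(C,C')$. If the separating attractor $A$ has $C\subset\mathrm{int}(A)$ and $A\cap C'=\emptyset$, then $y\in A$ would force $\omega(y)\subset A$, contradicting $z\in C'\setminus A$; hence $y\notin A$ and $F_{y}:=A$ works. In the dual case, $A^{*}$ excludes $y$ because $z\in\omega(y)\cap A$ witnesses $\omega(y)\cap A\ne\emptyset$, i.e.\ $y\notin A^{*}$.

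\emph{Step 3 (compactness).} The complement $K:=M\setminus V$ is compact, and the open sets $\{M\setminus F_{y}\}_{y\in K}$ cover $K$. Extracting a finite subcover indexed by $y_{1},\dots,y_{n}$, set $F:=F_{y_{1}}\cap\cdots\cap F_{y_{n}}$. Finite intersections of attracting (respectively repelling) regions are again attracting (respectively repelling), so $F$ is itself a filtrating region; by construction $C\subset\mathrm{int}(F)\subset V$, which yields the required basis.

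The main obstacle is Step 2 in the wandering case: the pseudo-orbit concatenation forcing $y\in C$ whenever $\omega(y),\alpha(y)\subset C$ demands a careful choice of waiting times $\ge 1$ together with continuity estimates absorbing the small jumps into the prescribed $\varepsilon$-tolerance. Once this is in hand, the remainder of the argument is a direct citation of Conley's theory and a routine compactness packaging.
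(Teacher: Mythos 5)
The paper gives no proof of this corollary beyond citing Conley, so the only question is whether your argument stands on its own. Your overall strategy — separate $C$ from each point of $M\setminus V$ by a filtrating region, handle wandering points through their $\alpha$- and $\omega$-limit sets, and finish by compactness — is exactly the standard Conley-theoretic argument, and Step~2(a), the pseudo-orbit concatenation (with the waiting-time bookkeeping you flag), and Step~3 are all sound.

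The genuine gap is in the ``dual'' case. The set $A^{*}=\{x:\omega(x)\cap A=\emptyset\}$ is the dual \emph{repeller} of the attractor $\bigcap_{t>0}\phi^{t}(A)$: a compact \emph{invariant} set, not a repelling region in the paper's sense ($\phi^{-t}(U)\subset\mathrm{int}(U)$ for $t>0$), and its interior may be empty (e.g.\ when the dual repeller is a single source), so the claim $C\subset\mathrm{int}(A^{*})$ fails in general and $A^{*}$ cannot enter the intersection of Step~3. The correct object is $R=M\setminus\mathrm{int}(A)$, which \emph{is} a repelling region (the condition $\phi^{t}(A)\subset\mathrm{int}(A)$ is equivalent to $\phi^{-t}(R)\subset\mathrm{int}(R)$), contains the invariant set $C$ in its interior and misses $C'$. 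But this substitution exposes a second problem in Step~2: a wandering point $y$ whose forward orbit only \emph{eventually} enters $\mathrm{int}(A)$ need not itself lie in $\mathrm{int}(A)$, so $R$ may still contain $y$. You must pull back by the flow: choose $T>0$ with $\phi^{T}(y)\in\mathrm{int}(A)$ (possible because $\omega(y)$ meets $\mathrm{int}(A)$) and set $F_{y}=\phi^{-T}(R)$; this is again a repelling region, still contains $C=\phi^{T}(C)$ in its interior, and now excludes $y$. With these two repairs the proof closes.
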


\begin{defi}
Let $C$ be a chain recurrence class of $M$ for the vector field $X$.
We say that $C$  is \emph{robustly chain transitive} if there exist a filtrating neighbourhood $U$ of $C$, and $C^1$ neighbourhood of $X$ called $\mathcal{U}$ such that for every $Y\in\mathcal{U}$, the maximal invariant set for $Y$ ($C_Y$) in $U$ is a unique  chain class.
\end{defi}

\begin{defi}
Let $C$ be a robustly chain transitive class of $M$ for the vector field $X$.
We say that $C$  is robustly transitive if there is a $C^1$ neighbourhood of $X$ called $\mathcal{U}$ such that for every $Y\in\mathcal{U}$, there is an orbit for $Y$ which is dense in $C_Y$.
\end{defi}

\subsection{Linear cocycles}
Let  $\phi=\{\phi^t\}_{t\in\RR}$ be a topological flow on a compact metric space  $K$.

Consider as well:
\begin{itemize}
\item A $d$ dimensional  linear bundle $E$ over $K$ with $\pi:E\to K$
\item A continuous map $A_t:(x,t)\in K\times \RR \mapsto GL(E_x,E_{\phi^t(x)})$  that satisfies the following \emph{cocycle relation }:
for any $x\in K$  and $t,s\in\RR$ one has:  $$A_{t+s}(x)=A_t(\phi^s(x))A_s(x)$$

\end{itemize}
We define a \emph{linear cocycle over   $(K,\phi)$}   as  the associated morphism   $A^t\colon K\times \RR\to K$
defined by $$A^t(x, v) = (\phi^t(x), A_t(x)v)\,.$$

Note that $\cA=\{A^t\}_{t\in\RR}$ is a flow on the space $E$ which projects on $\phi^t$.
$$\begin{array}[c]{ccc}
E &\stackrel{A^t}{\longrightarrow}&E\\
\downarrow&&\downarrow\\
K&\stackrel{\phi^t}{\longrightarrow}&K
\end{array}$$

If $\Lambda\subset K$ is a $\phi$-invariant subset,  then $\pi^{-1}(\Lambda)\subset E$ is $\cA$-invariant, and  we  call
\emph{the restriction of $\cA$ to $\Lambda$}  the restriction of $\{A^t\}$ to $\pi^{-1}(\Lambda)$.

 \subsection{Hyperbolicity and dominated splitting on linear cocycles}

 \begin{defi}
 Let $\phi$ be a topological flow on a compact metric space $M$ and a $\phi$-invariant connected compact subset $\La$. We consider a vector bundle  $\pi\colon E\to \La$ and
   a linear cocycle $\cA$ over $(\La,X)$.

 We say that $\cA$ admits a \emph{dominated splitting over $\Lambda$} if
 \begin{itemize}\item There exists a splitting $E=E^1\oplus\dots\oplus E^k$ over $\Lambda$ into $k$ sub-bundles
 \item  The dimension of the sub-bundles is constant, i.e. $dim(E^i_x)=dim(E^i_y)$ for all $x,y\in\Lambda$ and $i\in \set{1\dots k}$,
     \item The splitting is invariant, i.e. $A^t(x)(E^i_x)=E^i_{\phi^t(x)}$ for all $i\in \{1\dots k\}$,

\item There exists a $t>0$ such that for every $x\in \Lambda$ and any pair of non vanishing vectors $u\in E^i_x$ and $v\in E^j_x$, $i<j$ one has
 \begin{equation}\label{e.dom}
 \frac{\norm{A^t(u)}}{\norm u}\leq \frac{1}{2}\frac{\norm{ A^t(v)}}{\norm v}
 \end{equation}

 We denote $E^1\oplus_{_\prec}\dots\oplus_{_\prec} E^k$, or $E^1\oplus_{{_\prec}_t}\dots\oplus_{{_\prec}_t} E^k$ if one wants to emphasis the role of $t$: in that case one says that
 the splitting is \emph{$t$-dominated}.
\end{itemize}
 \end{defi}

 A classical result (see for instance \cite[Appendix B]{BDV})  asserts that the bundles of a dominated splitting are always continuous. A given cocycle may admit several dominated
  splittings.  However, the dominated splitting is unique if one prescribes the dimensions $dim(E^i)$.

 One says that one of the bundle $E^i$ is \emph{(uniformly) contracting} (resp. \emph{expanding}) if there is $t>0$ so that for every
 $x\in\La$ and every non vanishing vector   $u\in E^i_x$ one has $\frac{\norm{A^t(u)}}{\norm u}<\frac 12$
 (resp. $\frac{\norm{A^t(u)}}{\norm u}<\frac 12$). In both cases one says that $E^i$ is \emph{hyperbolic}.

 Notice that if $E^j$ is contracting (resp. expanding) then the same holds for any $E^i$, $i<j$  (reps. $j<i$).

\begin{defi}
 We say that the linear cocycle  $\cA$ is \emph{hyperbolic over $\Lambda$} if
  there is a dominated splitting $E=E^s\oplus E^u$ over $\Lambda$ into $2$  hyperbolic sub-bundles so that $E^s$ is uniformly contracting and $E^u$ is
  uniformly expanding.

  One says that $E^s$ is the \emph{stable bundle}, and $E^u$ is the \emph{unstable bundle}.
 \end{defi}

 The existence of a dominated splitting or of a hyperbolic splitting is a robust property

 \subsection{Linear Poincar\'e flow}

 Let $X$ be a $C^1$ vector field on a compact manifold $M$.  We denote by $\phi^t$ the flow of $X$.

 \begin{defi} The \emph{normal bundle} of $X$ is the vector bundle $N_X $ over $M\setminus Sing(X)$ defined as follows: the fiber $N_X(x)$ of $x\in M\setminus Sing(X)$ is
 the quotient space of $T_xM$ by the  line $\RR.X(x)$.
 \end{defi}
 Note that, if $M$ is endowed with a Riemannian metric, then $N_X(x)$ is canonically identified with the orthogonal space of $X(x)$:
 $$N_X=\{(x,v)\in TM, v\perp X(x)\} $$

Consider $x\in M\setminus Sing(M)$ and $t\in \RR$.  Thus $D\phi^t(x):T_xM\to T_{\phi^t(x)}M$ is a linear automorphism mapping $X(x)$ onto $X(\phi^t(x))$. Therefore
$D\phi^t(x)$ passes to the quotient as an linear map $\psi^t(x)\colon N_X(x)\to N_X(\phi^t(x))$:

$$\begin{array}[c]{ccc}
T_xM&\stackrel{D\phi^t}{\longrightarrow}&T_{\phi^t(x)}M\\
\downarrow&&\downarrow\\
N_X(x)&\stackrel{\psi^t}{\longrightarrow}&N_X(\phi^t(x))
\end{array}$$
where the vertical arrows are the canonical projections of the tangent space to the normal space. We call this map as the \emph{linear Poincar\'e flow}.

\begin{defi}
 We say that a vector field $X$ is \emph{hyperbolic over $\Lambda$} if
  there is a dominated splitting $TM=E^s\oplus\RR .X(x)\oplus E^u$ over $\Lambda$ into $2$  hyperbolic sub-bundles so that $E^s$ is uniformly contracting and $E^u$ is
  uniformly expanding.

  One says that $E^s$ is the \emph{stable bundle}, and $E^u$ is the \emph{unstable bundle}.
 \end{defi}

 Note that if $X$ is non singular the linear Poincar\'e flow is a linear cocycle.

 Notice that the notion of dominated splitting for non-singular flows is sometimes better expressed in terms of the linear Poincar\'e flow: for instance,
 the linear Poincar\'e flow of a robustly transitive non singular vector field
 always admits a dominated splitting, when the flow by itself may not admit any dominated splitting.
 An example of a diffeomorphism with a robustly transitive set having dominated splitting into two bundles, that none of them is contracting or expanding is exhibited in \cite{BV}. The suspension of this diffeomorphism would not have a dominated splitting of the tangent space.

 \subsection{Extended linear Poincar\'e flow}

 We are dealing with singular flows and the linear Poincar\'e flow is not defined on the singularities of the vector field $X$.
 However we can extend the linear Poincar\'e flow  as defined in \cite{GLW}.

 This flow will be a linear co-cycle define on some linear bundle over $M$ (even over the singularities of $X$), that we define now.

 \begin{defi}Let $M$ be a  manifold of dimension $d$.
 \begin{itemize}
 \item We call \emph{the projective tangent bundle of $M$}, and denote by $\Pi_\PP\colon \PP M\to M$, the fiber bundle whose fiber $\PP_x$ is
 the projective space of the tangent space $T_xM$: in other word, a point $L_x\in \PP_x$ is a $1$-dimensional vector subspace of $T_xM$. There is a natural projection  $\pi:TM\to \PP M$ than takes a $1$-subspace of $TM$ to the corresponding element of $\PP M$  
  \item We call \emph{normal bundle of $\PP M $} and we denote by $\Pi_\cN\colon \cN M\to \PP M$, the $d-1$-dimensional vector bundle over $\PP M$ whose fiber $\cN_{L}$ over
  $L\in \PP_xM$
  is the quotient space $T_x M/L$.

  If we endow $M$ with riemannian metric, then $\cN_L$  is identified with the orthogonal hyperplane of $L$ in $T_xM$.
 \end{itemize}
 \end{defi}

 Let $X$ be a $C^r$ vector field on a compact manifold  $M$, and $\phi^t$ its flow. The natural actions of the derivative of $\phi^t$ on $\PP M$ and $\cN M$ define
 $C^{r-1}$ flows on these manifolds.  More precisely, for any $t\in \RR$,

 \begin{itemize}
  \item We denote by $\phi_{\PP}^t\colon\PP M\to \PP M$ the flow defined by $$\phi_{\PP}^t(L_x)= D\phi^t(L_x)\in \PP_{\phi^t(x)}.$$

  \item We denote by $\psi_{\cN}^t\colon\cN \to \cN $ the $C^{r-1}$ flow  whose restriction to a fiber $\cN_L$, $L\in \PP_x$,
  is the linear map onto
  $\cN_{\phi^t_{\PP}(L)}$ defined as follows: $D\phi^t(x)$ is a linear map from $T_xM$ to $T_{\phi^t(x)}M$, which maps the line $\pi^{-1}(L)\subset T_xM$
  onto the line $\pi^{-1}(\phi^t_{\PP}(L))$.  Therefore it pass to the quotient in the announced linear map.
  $$\begin{array}[c]{ccc}
T_xM &\stackrel{D\phi^t}{\longrightarrow}&T_{\phi^t(x)}M\\
\downarrow&&\downarrow\\
\cN_L&\stackrel{\psi^t_{\cN}}{\longrightarrow}&\cN_{\phi^t_{\PP}(L)}
\end{array}$$

 \end{itemize}


 The one-parameter family   $\psi^t_\cN$ defines a flow on $\cN $,  which is a linear co-cycle over $\phi^t_\PP$.
 We call  $\psi^t_\cN$ the \emph{extended linear Poncar\'e flow}.
 We can summarize  by the following diagrams: 


$$\begin{array}[c]{ccc}
\cN &\stackrel{\psi^t_{\cN}}{\longrightarrow}&\cN \\
\downarrow&&\downarrow\\
\PP M &\stackrel{\phi_{\PP}^t}{\longrightarrow}&\PP M\\
\downarrow&&\downarrow\\
M&\stackrel{\phi^t}{\longrightarrow}&M
\end{array}$$


\begin{rema} The extended linear Poincar\'e flow is really an extension of the linear Poincar\'e flow defined in the previous section; more precisely:

 Let $S_X\colon M\setminus Sing(X)\to \PP M$ be the section of the projective bundle defined as $S_X(x)$ is the line $\langle X(x)\rangle\in \PP_x$ generated by $X(x)$.
 Then $N_X(x)= \cN_{S_X(x)}$ and the linear automorphisms $\psi^t\colon N_X(x)\to N_X(\phi^t(x))$ and $\psi^t_\cN\colon \cN_{S_X(x)}\to \cN_{S_X(\phi^t(x))}$ are the same.
\end{rema}

\subsection{Strong stable, strong unstable and center spaces associated to a hyperbolic singularity.}
Let $X$ be a vector field and $\sigma\in Sing (X)$ be a hyperbolic singular point of $X$.
Let
 $\lambda^s_k\dots\lambda^s_2<\lambda^s_1<0<\lambda^u_1<\lambda^u_2\dots \lambda^u_l$ be the Lyapunov exponents of $\phi_t$ at $\sigma$ and let
 $E^ s_k\oplus_{_<}\cdots E^s_2\oplus_{_<}E^s_1\oplus_{_<}E^u_1\oplus_{_<}E^u_2\oplus_{_<}\cdots \oplus_{_<}E^s_l$ be the corresponding (finest) dominated
 splitting over $\sigma$.

 A subspace $F$ of $T_\sigma M$ is called a \emph{center subspace} if it is of one of the possible form below:
 \begin{itemize}
  \item Either $F=E^ s_i\oplus_{_<}\cdots E^s_2\oplus_{_<}E^s_1$
  \item Or $F=E^u_1\oplus_{_<}E^u_2\oplus_{_<}\cdots \oplus_{_<}E^u_j$
  \item Or else $F=E^ s_i\oplus_{_<}\oplus_{_<}E^s_1\oplus_{_<}E^u_1\oplus_{_<}\cdots \oplus_{_<}E^s_j$
 \end{itemize}

 A subspace  of $T_\sigma M$ is called a \emph{strong stable space}, and we denote it  $E^{ss}_{i}(\sigma)$,  is there in $i\in\{1,\dots, k\}$ such that:
 $$E^{ss}_{i}(\sigma)=E^ s_k\oplus_{_<}\cdots E^s_{j+1}\oplus_{_<}E^s_i$$

 A classical result from hyperbolic dynamics asserts that for any $i$ there is a unique infectively immersed manifold $W^{ss}_i(\sigma)$, called a
 \emph{strong stable manifold}
 tangent at $E^{ss}_i(\sigma)$ and invariant by the flow of $X$.

 We define analogously the \emph{strong unstable spaces} $E^{uu}_j(\sigma)$ and the \emph{strong unstable manifolds} $W^{uu}_j(\sigma)$ for $j=1,\dots ,l$.

We can also define the strong stable and unstable manifolds in an analogue way, for regular points $x$ in an invariant set $\La$.

\section{Multisingular hyperbolicity}
\subsection{The reparametrized linear Poincar\'e flow}\label{ss.reparametrized}

We endow the manifold $M$ with a smooth Riemannian metric $\norm{.}$.
We call \emph{reparametrizing map} to the map $h\colon \PP M\times\RR\to \RR$ defined as follows:  $h(L,t)=\frac{\|D\phi^t(u)\|}{\|u\|}$, where $u$ is a non vanishing vector in $L$.

Note that $h$ satisfies the following cocycle relation:
\begin{equation}\label{e.cocycle}
 h(L,t+s)=h(\phi^t_{\PP}(L),s)\cdot h(L,t).
\end{equation}

\begin{defi}
We call \emph{reparametrized linear Poincar\'e flow} and we denote $\Psi_t$,
the linear cocycle $\Psi^t(L,u)\colon\cN \to\cN$ as follows:
$$\Psi^t(L,u)= h(L,t)\cdot \psi_{_\cN}^t(L,u)$$
where $u\in \cN_L$.
\end{defi}

The fact that this formula defines a Linear cocycle, follows directly from the fact that $\psi_{\cN}$ is a linear cocycle and $h$ satisfies (\ref{e.cocycle}).

\subsection{Maximal invariant set and lifted maximal invariant set}

Let $X$ be a vector field on a manifold $M$ and $U\subset M$ be a compact subset.  The \emph{maximal invariant set} $\La=\La_U$ of $X$ in $U$ is the intersection
$$\La_U=\bigcap_{y\in RR} \phi^t(U).$$

 We say that a compact $X$-invariant set $K$ is \emph{locally maximal} if there exist a compact neighbourhood $U$ of $K$ so that $K=\La_U$.

\begin{defi}We call \emph{lifted maximal invariant set in $U$}, and we denote by $\La_{\PP,U}\subset \PP M$
(or simply $\La_\PP$ if one may omit the dependence in $U$), to :

 $$\La_{\PP,U}=\overline{S_X(\La_U\setminus Sing X)}\subset \PP M,$$
 where $S_X\colon M\setminus Sing X\to \PP M$ is 
$S_X(x)=\RR.X(x)$.
 \end{defi}

 The lifted maximal invariant set does not  depend upper semi-continuously on the flow when there are singularities in $U$.

\subsection{The lifted maximal invariant set and the singular points}

 The aim of this section is to find a bigger set than
 the lifted maximal invariant set $\La_{\PP,U}$ that  varies upper semi-continuously with the vector field. We do this by adding some subset of the projective space over the singular
 points, as in \cite{BdL}. All the proofs of the following lemmas and propositions can be found there.

 Let $U$ be a compact region, and $X$ be a vector field, and  $\sigma$ 
  be a hyperbolic singularity of $X$, contained in the interior of $U$.

We define the \emph{escaping stable space of $\sigma$ in $U$} $E^{ss}_{\sigma,U}$ as the biggest strong stable space $E^{ss}_j(\sigma)$ such that the invariant manifold tangent to it
(that we call \emph{escaping strong stable manifold}) $W^{ss}_j(\sigma)$  is such that all orbits in it, escape $U$. That is,

$$W^{ss}_j(\sigma) \text{ is such that } \Lambda_{X,U}\cap W^{ss}_j(\sigma)=\{\sigma\}.$$

We define the \emph{escaping  unstable space of $\sigma$ in $U$}  and the \emph{escaping strong  unstable manifolds} analogously.

We define the \emph{central space of $\sigma$ in $U$} and we denote $E^c_{\sigma,U}$ the space such that
$$T_\sigma M=E^{ss}_{\sigma,U}\oplus E^ c_{\sigma,U}\oplus E^ {uu}_{\sigma,U}$$

 We denote by
 $\PP^i_{\sigma,U}$ the projective space of $E^ i(\sigma,U)$ where $i=\set{ss,uu,c}$.

 \begin{lemm}\label{l.escaping} Let $U$ be a compact region and $X$  a vector field whose singular points are hyperbolic and contained in the interior of $U$.
 Then, for any $\sigma\in Sing (X)\cap U$, one has :
 $$\La_{\PP,U}\cap\PP^{ss}_{\sigma,U}=\La_{\PP,U}\cap\PP^{uu}_{\sigma,U}=\emptyset.$$

 \end{lemm}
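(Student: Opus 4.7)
We prove the strong stable case by contradiction; the unstable case then follows by applying the same argument to $-X$, which interchanges the roles of stable and unstable bundles and converts the escaping strong unstable manifold into the escaping strong stable manifold.

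Suppose $L \in \Lambda_{\PP,U} \cap \PP^{ss}_{\sigma,U}$. By the definition $\Lambda_{\PP,U} = \overline{S_X(\Lambda_U \setminus \sing)}$ and the fact that $L$ projects to $\sigma$, there is a sequence $x_n \in \Lambda_U \setminus \sing$ with $x_n \to \sigma$ and $\RR \cdot X(x_n) \to L$. The first step is a linearization argument. Let $A = DX(\sigma)$, which is invertible since $\sigma$ is hyperbolic, and write the Taylor expansion $X(x) = A(x-\sigma) + o(|x-\sigma|)$. Up to subsequence, the unit vectors $u_n := (x_n - \sigma)/|x_n - \sigma|$ converge to some $u \in T_\sigma M$, hence $\RR\cdot X(x_n) \to \RR\cdot Au = L$. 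Because $A$ preserves the invariant decomposition $T_\sigma M = E^{ss}_{\sigma,U} \oplus E^{c}_{\sigma,U} \oplus E^{uu}_{\sigma,U}$ and is invertible on each summand, and because $L \subset E^{ss}_{\sigma,U}$, the components of $Au$ in $E^{c}_{\sigma,U}$ and $E^{uu}_{\sigma,U}$ must vanish, forcing $u \in E^{ss}_{\sigma,U}$.

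The second step uses the backward dynamics to produce a forbidden point on $W^{ss}_{\sigma,U}$. Work in a small neighbourhood $V$ of $\sigma$ in which the local strong stable manifold $W^{ss}_{\mathrm{loc}}(\sigma)$ (tangent to $E^{ss}_{\sigma,U}$) and a local center-unstable manifold $W^{cu}_{\mathrm{loc}}(\sigma)$ (tangent to $E^{c}_{\sigma,U}\oplus E^{uu}_{\sigma,U}$) provide transverse coordinates; write $x_n = \sigma + a_n + b_n$ with $a_n \in E^{ss}_{\sigma,U}$ and $b_n \in E^{c}_{\sigma,U}\oplus E^{uu}_{\sigma,U}$. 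The convergence $u_n \to u \in E^{ss}_{\sigma,U}$ yields $|b_n|/|a_n| \to 0$. By the dominated splitting $E^{ss}_{\sigma,U} \oplus_{\prec} (E^{c}_{\sigma,U}\oplus E^{uu}_{\sigma,U})$, the backward flow expands $E^{ss}_{\sigma,U}$ strictly more than the complement; a standard cone-field estimate in $V$ then shows that, as long as $\phi^{-t}(x_n)$ remains in $V$, the ratio of the $(E^{c}\oplus E^{uu})$-component to the $E^{ss}$-component decays at least exponentially in $t$.

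Fix $r > 0$ with $\{|y-\sigma| \leq r\} \subset V$, and let $t_n > 0$ be the first time $|\phi^{-t_n}(x_n) - \sigma| = r$; since $x_n \to \sigma$, we have $t_n \to \infty$. By the ratio decay just established, the $(E^{c}\oplus E^{uu})$-component of $\phi^{-t_n}(x_n)$ is negligible compared to its $E^{ss}$-component, so after extracting a subsequence $\phi^{-t_n}(x_n) \to y \in W^{ss}_{\mathrm{loc}}(\sigma) \subset W^{ss}_{\sigma,U}$ with $|y-\sigma| = r \neq 0$. Since $\Lambda_U$ is closed and $\phi^t$-invariant, $y \in \Lambda_U$, contradicting the defining property $\Lambda_{X,U} \cap W^{ss}_{\sigma,U} = \{\sigma\}$ of the escaping strong stable manifold.

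The main technical point is the cone-field estimate in the second step: one must verify that the domination at the linear level $A=DX(\sigma)$ persists as a nonlinear estimate throughout $V$, so that the backward orbits of the $x_n$ truly converge to $W^{ss}_{\mathrm{loc}}(\sigma)$ and not merely to a center direction. This is standard in the theory of local invariant manifolds at hyperbolic singularities, but it is the one place where one has to be careful rather than purely formal.
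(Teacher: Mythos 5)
Your argument is correct: the paper itself gives no proof of this lemma (it defers to \cite{BdL}), and your linearization-plus-backward-cone argument is essentially the standard one used there — the convergence of the flow directions $\RR\cdot X(x_n)$ to a line in $E^{ss}_{\sigma,U}$ forces the points $x_n\in\Lambda_U$ to approach $\sigma$ tangentially to $E^{ss}_{\sigma,U}$, and their backward orbits, which stay in $\Lambda_U$, then accumulate on a point of $W^{ss}_{\sigma,U}\cap\Lambda_U$ at definite distance from $\sigma$, contradicting the defining property of the escaping strong stable manifold. The technical point you flag (that the spectral gap of $DX(\sigma)$ persists as a nonlinear cone estimate on positions, not just tangent vectors, in an adapted neighbourhood where $W^{ss}_{\mathrm{loc}}(\sigma)$ is flattened) is indeed the only place requiring care, and it is standard.
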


As a  consequence we get the following characterization of the central space of $\sigma$ in $U$:
\begin{lemm}
The central space  $E^c_{\sigma,U}$   is the smallest center space containing $\La_{\PP,U}\cap \PP_\sigma$.
\end{lemm}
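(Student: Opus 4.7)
The plan is to establish the two inclusions separately: $\La_{\PP,U}\cap\PP_\sigma\subset\PP(E^c_{\sigma,U})$, and, for any center space $F\supset\La_{\PP,U}\cap\PP_\sigma$, the inclusion $F\supset E^c_{\sigma,U}$.

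For the upper inclusion I would take $L\in\La_{\PP,U}\cap\PP_\sigma$ and a non-zero $v\in L$, and write $v=v^{ss}+v^c+v^{uu}$ according to the splitting $T_\sigma M=E^{ss}_{\sigma,U}\oplus E^c_{\sigma,U}\oplus E^{uu}_{\sigma,U}$. Since the finest dominated splitting at $\sigma$ refines this one and the Lyapunov exponents on $E^{ss}_{\sigma,U}$ (resp.\ on $E^{uu}_{\sigma,U}$) are strictly smaller (resp.\ strictly larger) than those on $E^c_{\sigma,U}$, the projectivization of $D\phi^t(\sigma)=e^{t\,DX(\sigma)}$ acting on $\PP_\sigma$ pushes $\phi_\PP^t(L)$ towards $\PP^{ss}_{\sigma,U}$ as $t\to-\infty$ whenever $v^{ss}\neq 0$, and towards $\PP^{uu}_{\sigma,U}$ as $t\to+\infty$ whenever $v^{uu}\neq 0$. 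Since $\La_{\PP,U}$ is closed and $\phi_\PP^t$-invariant, any such limit lies in $\La_{\PP,U}\cap(\PP^{ss}_{\sigma,U}\cup\PP^{uu}_{\sigma,U})$, contradicting Lemma~\ref{l.escaping}. Hence $v^{ss}=v^{uu}=0$ and $L\subset E^c_{\sigma,U}$.

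For the lower inclusion I would use the maximality in the definition of $E^{ss}_{\sigma,U}$: assuming $E^{ss}_{\sigma,U}=E^s_k\oplus\cdots\oplus E^s_j$ is a proper subspace of the full stable space at $\sigma$, the next stable bundle $E^s_{j-1}$ enlarges it into a strong stable space $E^{ss}_{j-1}$ whose strong stable manifold $W^{ss}_{j-1}(\sigma)$ does \emph{not} escape $U$. One can therefore pick $x\in (W^{ss}_{j-1}\setminus W^{ss}_{\sigma,U})\cap\La_U$. Since $x\in\La_U\setminus\sing$ and $\phi^t(x)\to\sigma$ as $t\to+\infty$, every direction $S_X(\phi^t(x))$ lies in $\La_{\PP,U}$, and any subsequential projective limit as $t\to+\infty$ lies in $\La_{\PP,U}\cap\PP_\sigma$. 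A direct linearization-and-domination argument---using that $x$ has non-trivial $E^s_{j-1}$-component (because $x\notin W^{ss}_{\sigma,U}$) and that $E^s_{j-1}$ is the least-contracted bundle among $E^s_k,\dots,E^s_{j-1}$---shows that every such subsequential limit is a line $L_s\subset E^s_{j-1}$. The symmetric argument applied to $-X$ produces $L_u\in\La_{\PP,U}\cap\PP_\sigma$ with $L_u\subset E^u_{j'-1}$. Because center spaces are contiguous blocks of the finest splitting around the center, any center space $F$ containing both $L_s$ and $L_u$ must contain $E^s_{j-1}\oplus\cdots\oplus E^s_1\oplus E^u_1\oplus\cdots\oplus E^u_{j'-1}=E^c_{\sigma,U}$. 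Boundary cases in which $E^{ss}_{\sigma,U}$ or $E^{uu}_{\sigma,U}$ is trivial or coincides with the full stable/unstable space simply trivialize the corresponding side of this argument.

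The main obstacle will be the asymptotic direction analysis in the lower inclusion, namely verifying that for $x\in W^{ss}_{j-1}\setminus W^{ss}_{\sigma,U}$ the projective limits of $S_X(\phi^t(x))$ as $t\to+\infty$ sit inside $E^s_{j-1}$ and not merely inside the larger subspace $E^{ss}_{j-1}$. In the linearization this is transparent since $E^s_{j-1}$ carries the least-negative exponent among the stable bundles actually present in $x$, and therefore dominates in projective direction as $t\to+\infty$; in the $C^1$ setting, where no global linearization is available, this picture must be transferred to the actual flow via invariant-manifold theory, specifically the foliation of $W^{ss}_{j-1}$ by leaves of $W^{ss}_{\sigma,U}$ and the continuity of its holonomy. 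This is the only genuinely technical ingredient of the argument.
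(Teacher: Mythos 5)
Your argument is correct and is essentially the proof the paper relies on: the paper defers this lemma to \cite{BdL}, and both inclusions are obtained there exactly as you propose --- domination at $\sigma$ together with Lemma~\ref{l.escaping} (and invariance plus closedness of $\La_{\PP,U}$) for the inclusion $\La_{\PP,U}\cap\PP_\sigma\subset\PP^c_{\sigma,U}$, and the existence of points of $\La_U\setminus\{\sigma\}$ on the first non-escaping strong stable/unstable manifold for the minimality. One simplification of the step you flag as the technical obstacle: no holonomy argument is needed, since $X(\phi^t(x))\in T_{\phi^t(x)}W^{ss}_{j-1}$ and these tangent spaces converge to $E^{ss}_{j-1}(\sigma)$ as $\phi^t(x)\to\sigma$, so every subsequential limit of $S_X(\phi^t(x))$ lies in $\PP\bigl(E^{ss}_{j-1}(\sigma)\bigr)$, and intersecting with the already established inclusion in $\PP\bigl(E^c_{\sigma,U}\bigr)$ forces it into $\PP\bigl(E^s_{j-1}\bigr)$ because $E^{ss}_{j-1}(\sigma)\cap E^c_{\sigma,U}=E^s_{j-1}$.
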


We are now able to define the subset of $\PP M$ which extends the lifted maximal invariant set and which has the upper-semi continuity property.

 \begin{defi}Let $U$ be a compact region and $X$  a vector field whose singular points are hyperbolic, and disjoint from the boundary $\partial U$.
 Then the set
 $$B(X,U)=\La_{\PP,U}\cup\bigcup_{\sigma\in Sing(X)\cap U} \PP^c_{\sigma,U} \subset \PP M$$
 is called the \emph{extended maximal invariant set of $X$ in $U$}
 \end{defi}

 \begin{prop}\label{p.extended} Let $U$ be a compact region and $X$  a vector field whose singular points are hyperbolic, and disjoint from the boundary $\partial U$.

 Then the extended maximal invariant set $B(X,U)$ of $X$ in $U$ is a compact subset  of $\subset \PP M$.

 Furthermore, there is a $C^1$-neighbourhood $\cU$ of $X$
 for  which the map $Y\to B(Y,U)$ depends upper semi-continuously on $Y\in\cU$.
 \end{prop}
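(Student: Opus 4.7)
The plan is to handle compactness and upper semi-continuity separately. For compactness, since the singularities of $X$ are hyperbolic, hence isolated, and $U$ is compact, $Sing(X)\cap U$ is finite; each $\PP^c_{\sigma,U}$ is compact as the projective space of a finite-dimensional vector space; and $\La_{\PP,U}$ is by definition closed in the compact space $\PP M$. Thus $B(X,U)$ is a finite union of compact sets, hence compact.

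For upper semi-continuity, I fix an open neighborhood $V\supset B(X,U)$ in $\PP M$ and seek a $C^1$-neighborhood $\cU$ of $X$ such that $B(Y,U)\subset V$ for all $Y\in\cU$. Since the singularities of $X$ are hyperbolic and lie in the interior of $U$, the implicit function theorem yields a $C^1$-neighborhood $\cU_0$ of $X$ in which the singularities of $Y$ in $U$ are precisely the continuations $\sigma_Y$ of the singularities $\sigma$ of $X$, and the hyperbolic splitting at $\sigma_Y$ varies continuously (though it may refine). The escaping strong stable dimension can only grow under perturbation: the property $W^{ss}_j(\sigma)\cap \La_U=\{\sigma\}$ is open in $Y$, by $C^1$-continuity of the strong stable manifolds and the classical upper semi-continuity of the maximal invariant set. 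Consequently $\PP^c_{\sigma_Y,U}$ lies in a Hausdorff-small neighborhood of a subspace of $\PP^c_{\sigma,U}$, and hence inside $V$ after shrinking $\cU_0$.

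It then remains to control $\La_{\PP,Y,U}$. Take $Y_n\to X$ and $L_n\in\La_{\PP,Y_n,U}$ with $L_n\to L$. If the base point of $L$ is a regular point $x$ of $X$, the upper semi-continuity of the maximal invariant set gives $x\in \La_U$, and continuity of the section $S_Y$ at regular points yields $L=S_X(x)\in\La_{\PP,X,U}\subset V$. If the base point of $L$ is a singularity $\sigma\in Sing(X)$, I extract by diagonalization a sequence $y_n\in \La_{Y_n,U}\setminus Sing(Y_n)$ with $y_n\to\sigma$ and $S_{Y_n}(y_n)=[Y_n(y_n)]\to L$. Decomposing $y_n-\sigma_{Y_n}=v^{ss}_n+v^c_n+v^{uu}_n$ via the dominated splitting at $\sigma_{Y_n}$ and using the local linearization, a non-negligible $v^{ss}_n$ (respectively $v^{uu}_n$) would force the backward (respectively forward) orbit of $y_n$ to exit $U$ along the escaping strong stable (respectively strong unstable) manifold, contradicting $y_n\in \La_{Y_n,U}$. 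Hence $y_n-\sigma_{Y_n}$ is asymptotically concentrated in $E^c_{\sigma_Y,U}$, as is $Y_n(y_n)=DY_n(\sigma_{Y_n})(y_n-\sigma_{Y_n})+o(|y_n-\sigma_{Y_n}|)$ by invariance of $E^c$. Therefore $L\in \PP^c_{\sigma,U}\subset V$, using the control on $\PP^c_{\sigma_Y,U}$ established in the previous paragraph.

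The hard part will be making this last dichotomy fully quantitative and uniform in $Y$: one needs a fixed-size neighborhood of each $\sigma$ on which the linearization controls orbits uniformly for all $Y$ in some $C^1$-neighborhood, together with estimates showing that any appreciable escaping component forces a definite exit from $U$ within controlled time. This amounts to a uniform version of Lemma~\ref{l.escaping} under perturbation; the subtlety is that the dominated splitting at $\sigma_Y$ may refine and the escaping dimensions may jump, so the exit estimates are best phrased in terms of the persistent escaping manifolds of $\sigma$ (whose $Y$-continuations still escape $U$) rather than those of $\sigma_Y$ themselves.
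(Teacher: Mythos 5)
First, a remark on the comparison you were asked to be judged against: this paper does not actually prove Proposition~\ref{p.extended} — it states explicitly that the proofs of this block of lemmas and propositions are in \cite{BdL}. So your proposal can only be measured against the standard argument, which is indeed the one you outline: compactness is immediate (finitely many hyperbolic singularities in the compact $U$, each $\PP^c_{\sigma,U}$ a compact projective subspace, and $\La_{\PP,U}$ closed in the compact $\PP M$), and upper semi-continuity splits into the fibers over regular points (where upper semi-continuity of $\La_{Y,U}$ plus continuity of $S_Y$ suffices, as you say) and the fibers over the singularities. Your observation that the escaping property is open — so that $E^{ss}_{\sigma_Y,U}$ and $E^{uu}_{\sigma_Y,U}$ can only contain the continuations of the $X$-escaping bundles, hence $\PP^c_{\sigma_Y,U}$ can only shrink up to Hausdorff-small error — is correct and is the right way to handle the $\PP^c$ part; note that verifying this openness already requires the reduction to a compact fundamental domain of $W^{ss}_{j}$, a uniform backward exit time from $U$ for that fundamental domain, and the persistence of $E^{ss}_j$ as an invariant (not necessarily finest) subbundle for $Y$ — the same machinery you will need again below.

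The genuine gap is the one you flag yourself in the last paragraph, and it is not a peripheral technicality: the entire content of upper semi-continuity on the singular fibers is the claim that for $y_n\in\La_{Y_n,U}$ with $y_n\to\sigma$, the limit of $[Y_n(y_n)]$ lies in $\PP^c_{\sigma,U}$. As written, ``a non-negligible $v^{ss}_n$ would force the backward orbit of $y_n$ to exit $U$'' is an assertion, not a proof, and it is exactly the quantitative heart of the matter. To close it you must (i) fix a neighborhood $B$ of $\sigma$ on which all $Y$ in a $C^1$-neighborhood of $X$ admit uniform dominated-splitting/cone-field estimates adapted to the \emph{$X$-splitting} $E^{ss}_{\sigma,U}\oplus E^c_{\sigma,U}\oplus E^{uu}_{\sigma,U}$ (do not use the finest splitting of $\sigma_Y$, which may refine — you correctly note this); (ii) show that if $\|v^{ss}_n\|\geq\delta\,\|y_n-\sigma_{Y_n}\|$ then the backward orbit of $y_n$ stays in $B\subset U$ until it reaches an arbitrarily small neighborhood of a fixed compact fundamental domain $D\subset W^{ss}_{\sigma,U}\setminus\{\sigma\}$; (iii) invoke the uniform finite backward exit time from $U$ of a neighborhood of $D$, valid for all nearby $Y$, to contradict $y_n\in\La_{Y_n,U}$; and symmetrically for $v^{uu}_n$ forward. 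Only then does $y_n-\sigma_{Y_n}$ concentrate in $E^c$, and your $C^1$ Taylor expansion $Y_n(y_n)=DY_n(\sigma_{Y_n})(y_n-\sigma_{Y_n})+o(|y_n-\sigma_{Y_n}|)$ transports this to the velocity direction. With steps (i)--(iii) written out, the argument is complete and coincides with the proof in \cite{BdL}; without them, the proposition is only reduced to its essential difficulty, not established.
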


 A chain recurrence class admits a basis of  filtrating neighbourhood. That is, for any chain recurrence class  we can find a sequence of neighbourhoods ordered by inclusion  $U_{n+1}\subset U_n$, such that $C=\bigcap U_n$
  We define

$$\widetilde{\La(C)}=\bigcap_n\widetilde{\La(X,U_n)}  \text{ and } B(C)=\bigcap_n B(X,U_n).$$
These two sets are independent of the choice of the sequence $U_n$.

\subsection{Strong multisingular hyperbolicity}

 We are now ready to define the notion of hyperbolicity we will use in this paper. It is expressed in term of the reparametrized linear Poincar\'e flow defined in
 Section~\ref{ss.reparametrized}:

 \begin{defi}Let $U\subset M$ be a compact region and $X$ a $C^1$-vector field on $M$ and $C$ a chain recurrence class of $X$.  We say that $X$ is \emph{strong multisingular hyperbolic in $C$}
 if $X$ has hyperbolic  singularities in $U$ and if the restriction of the reparametrized linear Poincar\'e flow $\frac{\|D\phi^t(u)\|}{\|u\|}\psi^t$ to
 $B(C)$ is a uniformly hyperbolic linear cocycle over $\phi_\PP$.

  For $L\in B(C)$ we denote,
  $$\cN_L= \cN^s(L)\oplus \cN^u(L)$$
  the stable and unstable spaces of the reparametrized linear Poincar\'e flow.

 We call $dim\cN^s(L)$ the  $s$-index of multisingular hyperbolicity of $X$.
 \end{defi}

Note that the reparametrized linear Poincar\'e flow $\Psi^t$ is one of the possible reparametrizing cocycles considered in \cite{BdL} and therefore if  $\Psi^t$  is hyperbolic then $X$ is multisingular hyperbolic according to the definition in \cite{BdL}.
For simplicity we work with this stronger definition since our aim is just to make an example.
Being multisingular hyperbolic is a robust property and the proof of that is also in \cite{BdL}.

 \begin{coro}[in \cite{BdL}]\label{c.robusthyp}If $X$ is multisingular hyperbolic in $U$ then $X$ is a star flow in $U$.
 \end{coro}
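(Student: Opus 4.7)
The proof plan is to deduce hyperbolicity of every periodic orbit of every $C^1$-nearby flow in $U$ from the uniform hyperbolicity of $\Psi^t$ on $B(C)$, by exploiting the upper semi-continuity of $B(Y,U)$ together with the openness of hyperbolicity for linear cocycles.

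First I would observe that the reparametrization is trivial on the lifts of periodic orbits. If $\gamma$ is a periodic orbit of $X$ of period $T$ contained in $U$, pick $x\in\gamma$ and let $L=\RR.X(x)\in\PP_xM$. Since $\phi^T(x)=x$, the standard relation $D\phi^T(x)X(x)=X(\phi^T(x))$ gives $D\phi^T(x)X(x)=X(x)$; hence $\phi^T_\PP(L)=L$ and the cocycle factor satisfies
\[
h(L,T)=\frac{\|D\phi^T(x)X(x)\|}{\|X(x)\|}=1.
\]
Therefore $\Psi^T$ and $\psi^T_\cN$ coincide on the fiber $\cN_L$. Under the canonical identification $\cN_{S_X(x)}=N_X(x)$, this fiber carries the usual linear Poincar\'e return map of $\gamma$, so hyperbolicity of $\Psi^T$ at $L$ is equivalent to hyperbolicity of $\gamma$.

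Next, any periodic orbit $\gamma\subset U$ lies in $\La_{X,U}$, so $S_X(\gamma)\subset\La_{\PP,U}\subset B(X,U)$. Uniform hyperbolicity of $\Psi^t$ over $B(X,U)$ therefore forces every periodic orbit of $X$ in $U$ to be hyperbolic. The same argument applies verbatim to any $Y$ in a $C^1$-neighbourhood of $X$ for which $\Psi^t_Y$ is still uniformly hyperbolic over $B(Y,U)$, so the remaining step is to produce such a neighbourhood. By Proposition~\ref{p.extended}, $Y\mapsto B(Y,U)$ is upper semi-continuous at $X$, and the reparametrized linear Poincar\'e cocycle depends continuously on $Y$ in the $C^1$-topology. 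A standard compactness-plus-cone-field argument then propagates the dominated splitting $\cN=\cN^s\oplus\cN^u$ and the uniform contraction/expansion estimates from $B(X,U)$ to $B(Y,U)$ for all $Y$ close enough to $X$.

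The main obstacle is this last robustness step. The difficulty is that the central projective fibers $\PP^c_{\sigma_Y,U}$ attached to the singularities can have different dimensions than $\PP^c_{\sigma_X,U}$: depending on which escaping strong stable or strong unstable manifolds still leave $U$ under perturbation, the escape decomposition can jump. One must verify that even when the fibers of $B(Y,U)$ shrink or swell relative to those of $B(X,U)$, the reparametrization $h$ built into $\Psi^t$ furnishes exactly the right weights near the singularities to preserve uniform hyperbolicity, and that the splitting over the (possibly larger) fibers at singularities of $Y$ extends the limiting hyperbolic splitting at $\sigma_X$. Once this compatibility is checked, every periodic orbit of every $Y\in\cU$ contained in $U$ is hyperbolic, which is precisely the star condition.
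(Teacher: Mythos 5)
This corollary is not proved in the paper at all: it is imported verbatim from \cite{BdL}, and the robustness of multisingular hyperbolicity on which it rests is likewise only cited there. So there is no in-paper argument to compare against; I can only judge your reconstruction on its merits, and it is essentially correct and is the natural argument. The two real ingredients are exactly the ones you isolate: (i) on the lift $L=\RR.X(x)$ of a periodic point of period $T$ one has $D\phi^T(x)X(x)=X(x)$, hence $h(L,T)=1$ and $\Psi^T=\psi^T$ on $\cN_L=N_X(x)$, so uniform hyperbolicity of $\Psi^t$ over $S_X(\gamma)\subset\La_{\PP,U}\subset B(X,U)$ forces the linear Poincar\'e return map of $\gamma$ to be hyperbolic; and (ii) upper semi-continuity of $Y\mapsto B(Y,U)$ (Proposition~\ref{p.extended}) plus continuity of $(Y,t)\mapsto\Psi^t_Y$ for fixed $t$ and the openness of uniform hyperbolicity for linear cocycles give a $C^1$-neighbourhood on which $\Psi^t_Y$ remains hyperbolic over $B(Y,U)$; applying (i) to $Y$ yields the star property.

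The one criticism is that the ``main obstacle'' you end on is not actually an obstacle, and leaving it as an unverified compatibility condition weakens an otherwise complete argument. Upper semi-continuity is a one-sided statement: for $Y$ close to $X$ the set $B(Y,U)$ is contained in an \emph{arbitrarily small neighbourhood} of $B(X,U)$, so the central projective fibers $\PP^c_{\sigma_Y,U}$ can only shrink relative to (a neighbourhood of) $\PP^c_{\sigma_X,U}$, never swell beyond it. Since the hyperbolic splitting of $\Psi^t_X$ over the compact set $B(X,U)$ extends to continuous transverse cone fields on a neighbourhood in $\PP M$, these cone fields automatically cover $B(Y,U)$ and are still invariant and contracted/expanded for $Y$ close; the fibers of the bundle $\cN$ themselves always have dimension $d-1$, so no dimension count can jump. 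There is also no need for the reparametrization $h$ to supply ``the right weights'' near $\sigma_Y$ beyond its continuity in $Y$ for fixed $t$. In short: replace the final paragraph by the standard cone-field robustness statement and the proof is complete, which is precisely the content the paper defers to \cite{BdL}.
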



\subsection{Extension of hyperbolicity along an orbit}

Let us consider now a linear cocycle $\cA$ of a manifold $M$, a hyperbolic set $\La$ for the cocycle $\cA$ and an orbit $y$ such that the $\alpha $-limit of $y$, and the $\omega$-limit of $y$ are in $\La$. 


 Then the next lemma shows conditions in which  we can extend the hyperbolic structure of our cocycle to $\La\cup o(y)$.

\begin{lemm}\label{l.masuna} Let $\La$ be a  hyperbolic, maximal invariant set in $U$, for  $\cA$, and  $E_{\La}=E^s\oplus E^u $ be its hyperbolic splitting. Suppose as well that there is a wondering point $y$, sich that its orbit $O(y)$ satisfie the following:
 \begin{itemize}\item The $\alpha $-limit of $y$, $\alpha(y)$ is in $\La$, and therefore there is a stable conefield that is invariant for the future along the orbit of $y$

\item The $\omega $-limit of $y$ $\omega (y)$ is in $\La$ and therefore there is an unstable conefield that is invariant for the past  along the orbit of $y$
\item These conefields intersect transversaly
  \item There exists a compact neighbourhood $U'$ such that $\La\cup O(y)$ is a maximal invariant set in $U'$.

\end{itemize}
Then there exist  a unique hyperbolic splitting along $O(y)$   such that
the set $\La\cup O(y)$  is hyperbolic with that splitting.

\end{lemm}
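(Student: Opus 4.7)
The plan is to define $E^s$ and $E^u$ along $O(y)$ as the unique $\cA$-invariant subspaces lying inside the given invariant conefields, and then verify that together with the splitting on $\Lambda$ they yield a uniformly hyperbolic splitting on $\Lambda\cup O(y)$. Uniqueness will then be immediate from the cone characterization.

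First, I would construct the bundles. Using the forward-invariant stable conefield $C^s$ along $O(y)$ (which exists because $\alpha(y)\subset\Lambda$), set for each $x\in O(y)$
$$E^s_x\;=\;\bigcap_{n\geq 0}\,A_n\bigl(\phi^{-n}(x)\bigr)\bigl(\overline{C^s_{\phi^{-n}(x)}}\bigr).$$
For large $n$ the point $\phi^{-n}(x)$ lies in an arbitrarily small neighbourhood of $\Lambda$, where $\cA$ is uniformly hyperbolic and the stable cone is thin around $E^s$; the standard conefield argument then shows the nested intersection is a single linear subspace whose dimension equals $\dim E^s$. A symmetric construction using backward iterates of the unstable conefield (justified by $\omega(y)\subset\Lambda$) gives $E^u_x$ of dimension $\dim E^u$. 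Both bundles are $\cA$-invariant by construction and extend continuously to the endpoints in $\alpha(y)\cup\omega(y)\subset\Lambda$, matching the existing splitting there.

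Second, I would establish the direct sum decomposition and the uniform contraction/expansion. Transversality of the two conefields forces $E^s_x\cap E^u_x=\{0\}$, and since the dimensions add to the fiber dimension we obtain $E_x=E^s_x\oplus E^u_x$ along the entire orbit. For the uniform estimates, fix a small $\varepsilon>0$. Because $\Lambda\cup O(y)$ is the maximal invariant set in the compact neighbourhood $U'$, and $O(y)$ is wandering with $\alpha$- and $\omega$-limits inside $\Lambda$, the orbit spends only a bounded time outside an $\varepsilon$-neighbourhood of $\Lambda$. Inside that neighbourhood, continuity of the cocycle together with hyperbolicity on $\Lambda$ gives a contraction rate on $C^s$ and an expansion rate on $C^u$ arbitrarily close to the rates on $\Lambda$; the transit block contributes only a fixed multiplicative constant, which is absorbed by taking sufficiently long iterates. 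This yields uniform hyperbolicity of the splitting on all of $\Lambda\cup O(y)$. For uniqueness, any $\cA$-invariant hyperbolic splitting extending the one on $\Lambda$ must have its stable summand contained in $C^s$ along $O(y)$, otherwise, since the forward orbit of $x\in O(y)$ accumulates on $\Lambda$, a stable vector outside $C^s$ would eventually land outside the stable cone at points of $\Lambda$, contradicting the hyperbolic stable direction there. So it coincides with the intersection of cones above, and likewise for the unstable summand.

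The main technical obstacle is the uniformity of the hyperbolic rate across the transit segment of $O(y)$, where the cocycle is only controlled by continuity and need not contract or expand in the relevant directions. The argument combines many blocks of genuine hyperbolicity near $\Lambda$ with a single bounded transit block, absorbing the multiplicative cost by choosing the total time large enough. This is exactly where the maximal-invariant hypothesis enters: it gives a uniform upper bound on the length of the transit segment, which is what makes the multiplicative constant absorbable.
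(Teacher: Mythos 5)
Your overall strategy --- build invariant bundles over $O(y)$ from the cone fields inherited from the two limit sets, use transversality for the direct sum, and absorb the bounded transit segment to get the uniform rates --- is the same as the paper's, and your treatment of uniformity and uniqueness is fine (indeed more explicit than the paper's). But the explicit construction of the bundles iterates the cones in the wrong direction, and this is a genuine error, not a labeling slip. The set $\bigcap_{n\ge 0}A_n(\phi^{-n}(x))\bigl(\overline{C^s_{\phi^{-n}(x)}}\bigr)$ is not a linear subspace: near a hyperbolic set, forward images of a stable cone open up rather than shrink. In the model cocycle $A=\mathrm{diag}(1/2,2)$ over a fixed point, with $C^s=\set{|v_2|\le\varepsilon|v_1|}$, one has $A^n(C^s)=\set{|v_2|\le 4^n\varepsilon|v_1|}$, an increasing sequence whose intersection is the cone $C^s$ itself, not the line $E^s$. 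The ``standard conefield argument'' you invoke (projective contraction of the cone under iteration) applies to forward images of \emph{unstable} cones and to backward images of \emph{stable} cones; your symmetric construction of $E^u$ from backward iterates of the unstable cone field fails for the same reason.

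The fix is to pair each cone with the correct limit set and the correct direction of iteration: $E^u_x=\bigcap_{n\ge0}A_n(\phi^{-n}x)\bigl(\overline{C^u_{\phi^{-n}x}}\bigr)$ using the unstable cones available along the backward orbit because $\alpha(y)\subset\La$, and $E^s_x=\bigcap_{n\ge0}A_{-n}(\phi^{n}x)\bigl(\overline{C^s_{\phi^{n}x}}\bigr)$ using the stable cones available along the forward orbit because $\omega(y)\subset\La$. The paper does the equivalent thing in complementary form: it pulls the unstable cone field back from a neighbourhood of $\omega(y)$ along the whole orbit and defines $E^s_y$ as the set of vectors whose large positive iterates never enter that cone field, with $E^u_y$ defined symmetrically from the $\alpha$-limit side; a dimension count and the transversality of the two cone fields then give the splitting. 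Once this correction is made, the rest of your argument (matching dimensions, transversality giving the direct sum, bounded transit time giving uniform rates, and the cone characterization giving uniqueness) goes through and is essentially the paper's argument with more detail on the uniform estimates.
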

\begin{proof}
Let us consider the unstable space of $\omega (y) \subset \La$. Since  $\La$ is   hyperbolic then there is a splitting  $$E_{\omega (y)}=E^s_{\omega (y)}\oplus E^u_{\omega (y)} $$
that  extends by continuity to a small neighborhood $u_{\omega}$ of the $\omega $-limit of $y$,
and so there  exist $T>0$ such that the splitting extends to  $\phi^t(y)$  for  $t>T$. 

There is  an unstable cone field $C^u(\phi^t(y))$ around the unstable space of  $\phi^t(y)$  for  $t>T$ that is invariant for the past and therefore  extends along the orbit  for the past. We define
$E^s_y$ as the set of vectors which do not enter in this unstable cone field for any large positive iterate.

 The dimension of $dim(E^s_y)$ must the be $$dim(M)-dim(E^u_{\omega (y)})-1\,.$$  Then $dim(E^s_y)=dim(E^s_{\omega (y)})$.
We define $E^u_y$ analogously.
By construction the dimensions must match. The transversality of the cone fields gives us that this spaces form a hyperbolic splitting along the orbit. 

With this splitting $\La\cup O(y)$  is hyperbolic. The continuity comes form the fact that the unstable and stable cone fields along the orbit of $y$ coincide with the cone fields given by the hyperbolicity of $\La$ around the piece of orbit of $y$ that never leaves $u_{\omega}$ for the future.

\end{proof}


\begin{prop}\label{p.unamasextended}
Suppose that $\La$ is a multisingular hyperbolic, maximal invariant set in $U$. Suppose as well that
\begin{itemize}
\item $y$ is such that the $\alpha$ and $\omega$ limits of $y$, $\alpha(y)$  and $\omega(y)$ are in $\La$. 
  \item There exists a compact neighborhood $U'$ such that $\La\cup O(y)$ is a maximal invariant set in $U'$,
  \item The orbit of $y$ does not intersect any escaping stable or unstable manifold of any singularity in $\La$
  \end{itemize}
Then the extended maximal invariant set $\La_{\PP}(X,U')$ is $\La_{\PP}(X,U)\cup O(L)$ where $L=S_X(y)$ and $O(L)$ is the orbit of $L$ by $\phi^t_{\PP}$.

\end{prop}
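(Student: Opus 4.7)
The plan is to translate the statement into a claim about closures. Since $\La\cup O(y)$ is maximal invariant in $U'$ and $y$ is a regular point (its nontrivial $\alpha$- and $\omega$-limits prevent it from being a singularity), we have
\[
\La_{U'}\setminus\sing \;=\; (\La_U\setminus\sing)\cup O(y).
\]
Applying the section $S_X$, which is continuous on $M\setminus\sing$, yields
\[
S_X(\La_{U'}\setminus\sing)\;=\;S_X(\La_U\setminus\sing)\cup O(L).
\]
Taking closures in $\PP M$ and using $\overline{A\cup B}=\overline{A}\cup\overline{B}$, one gets $\La_{\PP}(X,U')=\La_{\PP}(X,U)\cup\overline{O(L)}$. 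The proposition therefore reduces to the inclusion $\overline{O(L)}\setminus O(L)\subset \La_{\PP}(X,U)$.

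Any accumulation point $L^*$ of $O(L)$ outside $O(L)$ comes from a sequence $\phi^{t_n}_{\PP}(L)$ with $t_n\to\pm\infty$; its base point lies in $\alpha(y)\cup\omega(y)\subset \La$. When $L^*$ sits over a regular point $z\in\La$, continuity of $S_X$ at $z$ gives $L^*=S_X(z)\in\La_{\PP}(X,U)$ directly. The delicate case is when $L^*$ sits over a singularity $\sigma\in\La$: then $y$ lies in $W^u(\sigma)$ (resp.\ $W^s(\sigma)$), and the hypothesis that $O(y)$ avoids every escaping strong stable/unstable manifold of $\sigma$ forces $y$ into an invariant manifold of $\sigma$ tangent to the central space $E^c_{\sigma,U}$. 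Standard linearization at the hyperbolic singularity $\sigma$ then shows that $S_X(\phi^t(y))$ converges to a line $L^*\in \PP^c_{\sigma,U}$, lying in the weakest non-contracting (resp.\ non-expanding) sub-bundle of the central space.

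To place $L^*$ inside $\La_{\PP}(X,U)$, I would exploit that $\sigma$ belongs to the chain recurrence class $C$: there are regular orbits of $\La_U$ itself asymptotic to $\sigma$ along the same central invariant manifold stratum as $y$, and continuity of the tangent direction at $\sigma$ puts their $S_X$-images accumulating on the same line $L^*$, so that $L^*\in\overline{S_X(\La_U\setminus\sing)}=\La_{\PP}(X,U)$. The principal obstacle is precisely this last step: one has to rule out that the orbit of $y$ approaches $\sigma$ along a direction not attained by any orbit in $\La_U$. This is where both the no-escape hypothesis (pinning the limit direction inside $E^c_{\sigma,U}$) and the characterization of $E^c_{\sigma,U}$ as the smallest center space containing $\La_{\PP,U}\cap\PP_\sigma$ enter essentially, ensuring that every direction accessible to $y$ is already realized as a limit of directions from $\La_U\setminus\sing$.
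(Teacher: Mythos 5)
Your reduction is exactly the paper's: use injectivity of $S_X$ on regular points to get $S_X(\La_{U'}\setminus Sing(X))=S_X(\La_{U}\setminus Sing(X))\cup O(L)$, take closures, and reduce everything to $\overline{O(L)}\setminus O(L)\subset \La_{\PP}(X,U)$. The regular-base-point case is handled correctly by continuity of $S_X$. The gap is the singular case, which you flag but do not close. Concretely: (i) $\sigma\in\omega(y)$ does not force $y\in W^s(\sigma)$ --- the $\omega$-limit set may be a nontrivial invariant set (e.g.\ a Lorenz attractor) that merely contains $\sigma$, in which case the orbit of $y$ accumulates on $\sigma$ without converging to it and your linearization argument does not cover all accumulation directions over $\sigma$; and (ii) even when $y\in W^s(\sigma)$ minus the escaping strong stable manifold, so that the limit line $L^*$ lies in $\PP^c_{\sigma,U}$, the characterization of $E^c_{\sigma,U}$ as the smallest \emph{center space} containing $\La_{\PP,U}\cap\PP_\sigma$ does not yield $L^*\in\La_{\PP,U}$: center spaces are constrained to be sums of consecutive bundles of the finest splitting, so $\PP^c_{\sigma,U}$ is in general strictly larger than $\La_{\PP,U}\cap\PP_\sigma$. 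Knowing that a non-escaping $W^{ss}_i(\sigma)$ meets $\La_U$ in a point other than $\sigma$ only produces \emph{some} limit direction over $\sigma$, possibly along a deeper stratum and hence a different line than $L^*$. Your sentence about "regular orbits of $\La_U$ asymptotic to $\sigma$ along the same stratum as $y$" is precisely the unproven assertion.

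For comparison, the paper's own proof is no more detailed at this point: it asserts in one line that the hypotheses give $\overline{O(L)}\subset S_X(\La_U\setminus Sing(X))\cup O(L)$, an inclusion that, as written, fails whenever $O(L)$ accumulates on a fiber over a singularity --- which happens in the paper's own example, where $\omega(y)$ is a singular point. The statement actually used downstream (Corollary \ref{c.unamashyp}, Lemma \ref{l.unaorbitamas}) is the one for the extended maximal invariant set: $B(X,U')=B(X,U)\cup O(L)$. In that formulation the singular fibers are present by definition as $\PP^c_{\sigma,\cdot}$, and the only thing to verify is $E^c_{\sigma,U'}=E^c_{\sigma,U}$, which follows at once from the no-escape hypothesis because $\La_{U'}\cap W^{ss}_j(\sigma)=\bigl(\La_U\cup O(y)\bigr)\cap W^{ss}_j(\sigma)=\La_U\cap W^{ss}_j(\sigma)$ for every $j$, and similarly for the unstable side. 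Recasting your singular case in terms of $B$ would close the gap; as written for $\La_{\PP}$, both your argument and the paper's are incomplete.
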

\proof
 The set $S_X(\La_{U'} \setminus Sing(X))$ is one to one with respect to $\La_{U'} \setminus Sing(X)$ . Therefore $$S_X(\La_{U'} \setminus Sing(X))=S_X(\La_{U} \setminus Sing(X))\cup O(L)\,.$$
The hypothesis above, that state that the orbit of $y$ is away from the escaping stable and unstable manifolds of the singularity, and the fact that  the $\alpha$ and $\omega$ limits of $y$ are in $\La$
$$S_X(\La_{U} \setminus Sing(X))\cup \overline{O(L)}\subset S_X(\La_{U} \setminus Sing(X))\cup O(L)\,.$$
Therefore \begin{eqnarray*}
\overline{S_X(\La_{U'} \setminus Sing(X))}&=&\overline{S_X(\La_{U} \setminus Sing(X))}\cup \overline{O(L)}\\&=&\overline{S_X(\La_{U} \setminus Sing(X))}\cup O(L)\\&=&\La_{\PP}(X,U)\cup O(L)\,.\end{eqnarray*}
\endproof

\begin{coro}\label{c.unamashyp}

Suppose that $\La$ is a  multisingular hyperbolic, maximal invariant set in $U$, for  $X$. Suppose as well that there is a point $y$ such that:
\begin{itemize}\item  The $\alpha$ and $\omega$ limits of $y$, $\alpha(y)$  and $\omega(y)$ are in $\La$. 
  \item There exists a compact neighborhood $U'$ such that $\La\cup O(y)$ is a maximal invariant set in $U'$,
  \item The orbit of $y$ does not intersect any escaping stable or unstable manifold of any singularity in $\La$
   \item The stable and unstable conefields  along the orbit of $S_X(y)$ (that arise from the hyperbolic splittings of $\alpha(y)$  and $\omega(y)$ respectively) intersect transversally, 
 \end{itemize}
Then  $\La\cup O(y)$ is  multisingular hyperbolic.
\end{coro}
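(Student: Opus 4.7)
The plan is to combine Proposition \ref{p.unamasextended} with Lemma \ref{l.masuna} applied to the reparametrized linear Poincar\'e flow $\Psi^t$ over $\phi^t_\PP$. The main point is that multisingular hyperbolicity is, by definition, the hyperbolicity of the cocycle $\Psi^t$ over the compact invariant set $B(X,U)$, so extending multisingular hyperbolicity from $\La$ to $\La\cup O(y)$ reduces to extending a hyperbolic structure of a cocycle along the single extra orbit $O(L)$, where $L=S_X(y)$.

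First I would show that $B(X,U')=B(X,U)\cup O(L)$. Proposition~\ref{p.unamasextended} gives $\La_{\PP}(X,U')=\La_{\PP}(X,U)\cup O(L)$. It remains to see that the central spaces at each singularity $\sigma\in Sing(X)\cap U$ are unchanged when we pass from $U$ to $U'$. Since $\La\cup O(y)$ is maximal invariant in $U'$ and $O(y)$ is assumed disjoint from all the escaping strong stable and unstable manifolds of the singularities, every escaping strong (un)stable manifold of $\sigma$ that escaped $U$ still escapes $U'$. Consequently $E^{ss}_{\sigma,U'}=E^{ss}_{\sigma,U}$ and $E^{uu}_{\sigma,U'}=E^{uu}_{\sigma,U}$, so $\PP^c_{\sigma,U'}=\PP^c_{\sigma,U}$, and the claimed identity follows.

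Next I would apply Lemma~\ref{l.masuna} to the cocycle $\Psi^t$ over $\phi^t_\PP$, with hyperbolic set $B(X,U)$ and wandering orbit $O(L)$. Three hypotheses need checking. The transversality of the invariant stable and unstable cone fields along $O(L)$ is precisely the fourth bullet of the corollary. Maximal invariance of $B(X,U)\cup O(L)$ in a suitable neighbourhood of $\PP M$ for $\phi^t_\PP$ follows from the fact that $\La\cup O(y)$ is maximal invariant in $U'$ for $X$ together with the upper semi-continuity of the extended invariant set (Proposition~\ref{p.extended}). For the limit condition, note that if $\alpha(y)$ or $\omega(y)$ is a regular point of $\La$, then its image under $S_X$ lies in $\La_{\PP}(X,U)\subset B(X,U)$; if instead it is a singularity $\sigma$, then any accumulation point of $\phi^t_\PP(L)$ in $\PP_\sigma$ cannot lie in the escaping strong stable or unstable subspace (otherwise the corresponding orbit would exit $U'$), and hence lies in $\PP^c_{\sigma,U}\subset B(X,U)$.

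Lemma~\ref{l.masuna} then produces a continuous hyperbolic splitting of $\cN$ along $O(L)$ matching the one of $\Psi^t$ over $B(X,U)$, so $\Psi^t$ is uniformly hyperbolic over $B(X,U)\cup O(L)=B(X,U')$, which is exactly strong multisingular hyperbolicity of $\La\cup O(y)$. The delicate step I expect to be the main obstacle is the careful treatment of the case where $\alpha(y)$ or $\omega(y)$ is a singularity: one must rule out that the orbit $\phi^t_\PP(L)$ accumulates on a projective direction outside the central space, which is where the hypothesis about escaping invariant manifolds really gets used.
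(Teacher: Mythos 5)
Your proposal is correct and follows essentially the same route the paper intends: the corollary is presented there as a direct combination of Proposition~\ref{p.unamasextended} (identifying the extended maximal invariant set as $B(X,U)\cup O(L)$) with Lemma~\ref{l.masuna} (extending the hyperbolic splitting of the cocycle $\Psi^t$ along the wandering orbit $O(L)$), which is exactly your argument. Your added verifications — that the central spaces at the singularities are unchanged and that accumulation points of $\phi^t_\PP(L)$ over a singularity must lie in the central projective space — are details the paper leaves implicit but are handled correctly.
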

Let us consider the set of chain recurrent  points in a maximal invariant set $\La\cap\cR$ and suppose that this set is maximal invariant in a smaller neighborhood $U'$, i.e. $$\bigcap\phi^t(U')=\La\cap\cR\,.$$

Applying the same argument to a set of orbits in the hypothesis of Proposition \ref{p.unamasextended}, we get that if the non chain recurrent orbits in a maximal invariant set  do not intersect the escaping spaces of the singularities, then $$B(X,U')\cup S(\La\cap\cR ^c)=B(X,U)\,.$$
As a consequence:
\begin{coro}\label{c.muchasmashyp}
Let $\La$ be the maximal invariant set in $U$. We consider the set of the chain recurrent orbits $\La\cap\cR$ and the set of the non chain recurrent orbits $\La\cap\cR^c$.
We lift the chain recurrent orbits $S(\La\cap\cR)$. Suppose as well that :
\begin{itemize}\item The set of chain recurrent  orbits in the extended maximal invariant set $B(X,U)\cap S(\La\cap\cR)$ is hyperbolic for the reparametrized linear Poincar\'{e} flow with the same index for all connected components.
  \item Every non chain recurrent orbit $O(y) \in \La$ does not intersect any $U$ escaping stable or unstable manifold of any singularity in  $\La$
   \item The stable and unstable conefields that extend along  the lifted non chain recurrent orbits   intersect transversally,  \end{itemize}
Then  $\La$ is multisingular hyperbolic.
\end{coro}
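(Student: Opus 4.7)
The plan is to promote the hyperbolic splitting already given on $B(X,U)\cap S(\La\cap\cR)$ to a uniformly hyperbolic splitting of the reparametrized linear Poincar\'e flow $\Psi^t$ on the whole extended maximal invariant set $B(X,U)$. My approach has three stages: first locate the limit sets of the non-chain-recurrent orbits, then build an invariant splitting along each such orbit by a cone-field argument, and finally use compactness to recover uniform hyperbolicity.

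By Conley's fundamental theorem, every orbit $O(y)\subset\La\cap\cR^c$ has $\alpha(y)$ and $\omega(y)$ contained in chain recurrence classes inside $\La\cap\cR$. By hypothesis the reparametrized linear Poincar\'e flow is hyperbolic on $B(X,U)\cap S(\La\cap\cR)$ with a common $s$-index on every connected component, so the stable and unstable bundles at the two limit sets have matching dimensions. Continuity extends them to transversal stable and unstable cone fields on an open neighborhood of $B(X,U)\cap S(\La\cap\cR)$ inside $B(X,U)$. The hypothesis that $O(y)$ avoids every escaping stable and unstable manifold guarantees that the lifted orbit $O(L)$, with $L=S_X(y)$, lands inside those neighborhoods in forward and backward time, so Proposition \ref{p.unamasextended} identifies the extended invariant set of $\La\cap\cR\cup O(y)$ with the clean union $(B(X,U)\cap S(\La\cap\cR))\cup O(L)$. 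Propagating the stable cone forward and the unstable cone backward along $O(L)$ produces invariant cone fields along the whole lifted orbit; the transversality hypothesis then lets Lemma \ref{l.masuna} (equivalently, Corollary \ref{c.unamashyp}) furnish a hyperbolic splitting of the cocycle along $O(L)$ that is continuous at the endpoints.

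Performing this extension for every non-chain-recurrent orbit simultaneously rather than one at a time gives a continuous, $\Psi^t$-invariant splitting $\cN=\cN^s\oplus\cN^u$ over all of $B(X,U)$. The expansion and contraction rates of $\Psi^t$ in these cones are continuous functions of the base point, and on the hyperbolic core they are strictly stronger than what is required by the definition of hyperbolicity; the main obstacle is to show that the rates remain uniform as the base point ranges over the lifted non-chain-recurrent orbits. This reduces to a uniform upper bound on the time any lifted non-chain-recurrent orbit can spend outside a fixed neighborhood of the hyperbolic core. Such a bound follows from compactness of $B(X,U)$ (Proposition \ref{p.extended}) together with the observation that any accumulation of arbitrarily long excursions would produce a compact invariant set of chain-recurrent points outside $\La\cap\cR$, contradicting the definition of the chain recurrent set. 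Once this time bound is in hand, a standard cone-field estimate upgrades the pointwise splitting into uniform contraction on $\cN^s$ and uniform expansion on $\cN^u$, which is exactly strong multisingular hyperbolicity of $\La$.
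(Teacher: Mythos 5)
Your argument follows the same route the paper intends: Conley theory places $\alpha(y)$ and $\omega(y)$ in the hyperbolic core $B(X,U)\cap S(\La\cap\cR)$, the equal-index hypothesis matches the dimensions of the bundles at the two ends, Proposition \ref{p.unamasextended} identifies the extended maximal invariant set, and the cone-field extension of Lemma \ref{l.masuna} and Corollary \ref{c.unamashyp} is applied orbit by orbit to the wandering part. The paper actually states this corollary with no proof beyond ``applying the same argument to a set of orbits,'' so your closing paragraph --- the compactness argument bounding the time a wandering orbit of $\La$ can spend outside a neighbourhood of $\La\cap\cR$ (any unbounded excursion would yield a full orbit whose $\omega$-limit set is chain transitive yet disjoint from $\La\cap\cR$), which is what upgrades the orbit-by-orbit splittings to uniform hyperbolicity --- correctly supplies the step the paper leaves implicit.
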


\section{A strong multisingular hyperbolic set in $S^3$}\label{attractor-repeller}
This section will be dedicated to  building a set in $S^3$ containing 2 singularities of different indexes that will be strong multisingular hyperbolic.  However this set will not be chain recurrent.

\begin{defi}\label{d.sll} We say that a hyperbolic singularity is strong Lorenz like if its tangent space splits into 3 invariant spaces. If the stable index is 2 then the  Lyapunov exponents satisfy :
 $$0<-\lambda_a^{s}<\lambda_a^{u}<-\lambda_a^{ss}\,.$$
 If the unstable index is 2 then: $$-\lambda_r^{uu}<\lambda_r^{s}<-\lambda_r^{u}<0$$
 \end{defi}

Recall that this section is dedicated to prove:
\begin{theo*}
There exists an open set of vector fields $\mathcal{U}\subset\mathcal{X}^1(S^3)$ such that every $X\in\mathcal{U}$ has  the following properties.
\begin{itemize}
\item There is a filtrating region $U=U_a\cap U_r$
where $\La$ is the maximal invariant in it i.e. $$\La=\bigcap_{t\in \RR}\phi^t(U)$$ where $\phi$ is the flow of $X$.
\item  All singularities contained in $\La$ are strong Lorenz like.
\item The set $\La$ contains a  singularity $\sigma_a$ that is accumulated by periodic orbits and that has an escaping strong stable manifold with respect to $U_a$.
\item $\La$ contains a  singularity $\sigma_r$ that is accumulated by periodic orbits and that has an  escaping strong unstable manifold with respect to $U_r$.

\item There are non-chain recurrent orbits  in $\La$ such that the $\alpha$-limit of them is in the chain-recurrent class of $\sigma_r$ (that we call $L_r$ ) and the $\omega$-limit of them is in the chain-recurrent class of $\sigma_a$ (that we call $L_a$).
\item The set $\La$ is strong multisingular hyperbolic.

\end{itemize} 

\end{theo*}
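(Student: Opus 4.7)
The plan is to exhibit $X$ as a flow on $S^3$ obtained by gluing, in the spirit of Guckenheimer--Williams, a standard geometric Lorenz attractor and its time-reversal. Concretely, in a ball $V_a\subset S^3$ I place a geometric Lorenz attractor: it sits inside an attracting region whose maximal invariant set $L_a$ is singular hyperbolic, contains the Lorenz-like singularity $\sigma_a$ and a dense family of hyperbolic periodic orbits, and is such that both branches of the strong stable manifold $W^{ss}(\sigma_a)$ leave $V_a$ through $\partial V_a$, so they are escaping strong stable manifolds relative to $V_a$. In a disjoint ball $V_r\subset S^3$ I dualize this construction, placing the time-reversed geometric Lorenz attractor; for $X$ this yields a repelling region containing a singular hyperbolic set $L_r\ni\sigma_r$ accumulated by periodic orbits and with escaping strong unstable manifolds. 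I then extend the two local models to a global $X$ on $S^3$ so that outside $V_a\cup V_r$ every orbit is wandering and flows from $V_r$ towards $V_a$, and so that a finite nonempty family of wandering orbits $\gamma_1,\dots,\gamma_n$ satisfies $\alpha(\gamma_i)\subset L_r$ and $\omega(\gamma_i)\subset L_a$. The entries of the $\gamma_i$ into $V_a$ are placed in the weak-stable cone of $\sigma_a$ \emph{away from} $W^{ss}(\sigma_a)$, and dually the exits from $V_r$ are placed in the weak-unstable cone of $\sigma_r$ away from $W^{uu}(\sigma_r)$. Finally I enlarge $V_a$ and $V_r$ to an attracting region $U_a$ and a repelling region $U_r$ that jointly contain each $\gamma_i$; then $U:=U_a\cap U_r$ is filtrating and its maximal invariant set is exactly $\Lambda=L_a\cup L_r\cup\bigcup_i\gamma_i$.

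\textbf{Verification of strong multisingular hyperbolicity.} Each of the two Lorenz pieces is, in the sense of the reparametrized linear Poincar\'e flow, strong multisingular hyperbolic on its own extended maximal invariant set: singular hyperbolicity of the Lorenz attractor (respectively the Lorenz repeller) translates, via the central-space description of Section~4 and the cocycle $\Psi^t$ of Section~3, into uniform hyperbolicity of $\Psi^t$ over $B(X,U_a)$ and $B(X,U_r)$, with a one-dimensional stable bundle on both pieces (coming from the strong stable direction on $L_a$ and from the weak stable direction extended via the reparametrization on $L_r$). In particular the two pieces have the same $s$-index for the reparametrized Poincar\'e flow, so the hypothesis on the hyperbolic splitting of the chain-recurrent part of $\Lambda$ in Corollary~\ref{c.muchasmashyp} is met. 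By construction the wandering orbits $\gamma_i$ avoid the escaping strong stable manifold of $\sigma_a$ and the escaping strong unstable manifold of $\sigma_r$; this is condition two of that corollary. The third condition is the transversality, along each lifted wandering orbit, of the stable cone field extended from $\omega(\gamma_i)\subset L_a$ backwards and the unstable cone field extended from $\alpha(\gamma_i)\subset L_r$ forwards; both cones are two-dimensional on $S^3$, and by the geometric placement of $\gamma_i$ they are put in general position, which is an open condition. Corollary~\ref{c.muchasmashyp} then gives strong multisingular hyperbolicity of $\Lambda$, and Corollary~\ref{c.robusthyp} supplies the star property on $U$. Robustness of every item in the theorem follows from openness of singular hyperbolicity of $L_a$ and $L_r$, upper semicontinuity of $B(Y,U)$ in $Y$ (Proposition~\ref{p.extended}), robustness of filtrating regions, and persistence of transverse heteroclinic connections; so the conclusions hold on a full $C^1$ open set $\mathcal U\subset\mathcal X^1(S^3)$.

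\textbf{Main obstacle.} The construction itself is routine once one commits to two Lorenz-type pieces, and the accumulation of $\sigma_a,\sigma_r$ by periodic orbits is automatic from the Guckenheimer--Williams model; the essential difficulty lies in verifying the two geometric conditions on the connecting orbits simultaneously. One must choose each $\gamma_i$ so that it enters $V_a$ through the two-dimensional weak-stable cone of $\sigma_a$ but misses the one-dimensional escaping manifold $W^{ss}(\sigma_a)$, exits $V_r$ through the weak-unstable cone of $\sigma_r$ but misses $W^{uu}(\sigma_r)$, and yet the extended stable and unstable cone fields arising from $L_a$ and $L_r$ meet transversally along the lifted orbit in $\mathcal N$, so that Lemma~\ref{l.masuna} (and hence Corollary~\ref{c.muchasmashyp}) applies. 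In three dimensions these are codimension-one open conditions that can be arranged jointly, but the proof must carry out this placement carefully inside the global flow on $S^3$ and then invoke the results of Section~3 to lift the resulting hyperbolicity of the reparametrized Poincar\'e flow from the isolated Lorenz pieces to the whole of $\Lambda$.
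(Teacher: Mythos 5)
Your high-level strategy coincides with the paper's (a geometric Lorenz attractor $L_a$, its time-reversal $L_r$, connecting orbits, and then Corollary~\ref{c.muchasmashyp}), but there is a structural gap in the middle of your argument. You assert that $\Lambda=L_a\cup L_r\cup\bigcup_i\gamma_i$ for a \emph{finite} family of wandering orbits, and you verify transversality only by putting these finitely many orbits "in general position." This cannot happen: $L_a$ is an attractor and $L_r$ a repeller, so the stable set of $L_a$ and the unstable set of $L_r$ are open; if a single connecting orbit lies in the interior of the filtrating region, then a whole open ($2$-parameter) family of orbits connects $L_r$ to $L_a$ inside $\Lambda$. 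Consequently the transversality of the extended stable and unstable cone fields must be verified \emph{uniformly over an uncountable family} of wandering orbits, and a general-position argument for finitely many representatives does not yield it. This is precisely the role of the plug of Theorem~\ref{t.tube}, which your proposal omits: all crossing orbits are channeled through an annulus $A_0\to A_1$, the strong unstable foliation of $L_r$ arrives at $A_0$ radially, the strong stable foliation of $L_a$ arrives at $A_1$ radially, and the crossing map $P$ satisfies $\partial P_\theta/\partial r\neq 0$ everywhere (Lemma~\ref{l.transvers}), which twists one radial foliation across the other and gives the required transversality simultaneously for every crossing orbit. The plug's filtration (removing the basins of its sources and sinks) is also what keeps the extra recurrence created by the gluing down to two hyperbolic periodic saddles.

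A second, quantitative gap: you claim that singular hyperbolicity of each Lorenz piece "translates" into uniform hyperbolicity of the reparametrized cocycle $\Psi^t=h(L,t)\,\psi^t_{\cN}$. For the \emph{strong} version used here, with $h(L,t)=\|D\phi^t|_L\|$, this is not automatic: along orbits passing near $\sigma_a$ the factor $h$ can grow at the rate of the unstable eigenvalue, and this growth must be dominated by the contraction of $\cN^s$. The paper arranges this by fixing the spectrum (expansion rate between $\sqrt2$ and $2$, strong contraction between $4$ and $5$, and correspondingly strong rates for the singularities of the other factors), and the verification of contraction of $\cN^s$ uses these bounds explicitly. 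Without imposing such spectral conditions on your Lorenz models, the step from singular hyperbolicity of $L_a$, $L_r$ to hyperbolicity of $\Psi^t$ over $B(X,U)$ is unjustified.
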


\subsection{The Lorenz attractor and the stable foliation}
In this subsection we will shortly comment on the construction of a geometric Lorenz attractor, done in \cite{GuWi}.
\subsubsection{Guckenheimer Williams, geometric model}
We consider a flow in $\RR^3$ as in \cite{GuWi}, having a robustly transitive  singular attractor, that we call $L_a$.
This set has the following properties:
\begin{itemize}
\item It has a singularity $\sigma_a$ in the origin with three different  real  Lyapunov exponents $\lambda_1,\lambda_2,\lambda_3$, with the following relation:$$-\lambda_2>\lambda_1>-\lambda_3>0\,,$$
The expansion rate $\lambda_1$ is bounded form below by $\sqrt{2}$ and from above by $2$ 
\item For the attractor $L_a$, we can consider an attracting region $W_a$  such that the boundary of this neighbourhood is a  bi-torus.

\item The strong stable spaces  of the points in $W_a$ are well define 
\end{itemize}
Additionally we ask that the strong contraction rate is bigger that $4$ and smaller than $5$.

\subsubsection{Attracting region}\label{ball}

Since we aim to construct an example in $S^3$,  it will be more convenient  to work with an attracting region $U_a$ witch is a ball.
The original construction of Guckenheimer Williams defines a vector field in $\RR^3$ such that
there exist an attracting region   $U_a$ such that:
\begin{itemize}
\item $W_a\subset U_a$
\item the maximal invariant set contained in it is $L^a $ and  two saddle singularities of stable index one called $p$ and $p'$
\item the strong stable manifold of all points in   $U_a$  is well defined and parallel to the stable manifold of $\sigma_a$.
\item We can choose  $U_a$ such that its boundary  is diffeomorphic to $S^2$.
\end{itemize}
 For a more detail description we refer  the reader to  Guckenheimer Williams's work \cite{GuWi} .

Let us now consider an arc $\gamma$ in a branch of the stable manifold of $p$ (not containing $p$) and a cylinder $C_p$  with axis $\gamma$. This cylinder can be parametrized by  $S^1\times[\delta,\rho]$ and if the radius is small enough, the stable manifolds of the points in $U_a$ cut  $C_p$  in a foliation parallel to the axis. We can find a compact neighborhood $C_a\subset U_a$ of $\gamma$ such that its boundary is a smooth cylinder that contains $C_p$.

 In addition to this we ask that this cylinder  $C_a$ cuts the boundary of $U_a$.

Now we consider $U_a\setminus C_a$. 
We can change coordinates so that  $C_p$ is an annulus $A_a$ and now the parallel foliation induced by the stable manifolds of the points of $U_a$ in $C_p$ is radial. 
such that one of the connected components of $$\partial[U_a\setminus C_a]\setminus A_a$$ has a point of intersection of the stable manifold of $p$ and doesn't have points of intersection of the stable manifolds of the other singularities. We call  this component   $D_a$.

For simplicity we continue to call  $U_a\setminus C_a$  as  $U_a$.
\subsection{A plug }
In subsection \ref{plug} we will prove the following theorem:

\begin{theo}\label{t.tube}
There exist a vector field $\chi$ such that its flow  $\phi_{\chi}$  defined  in $S^3$
 has  the following properties:

 There is a region $S^2\times{[-1,1]}\subset S^3$ such that
\begin{itemize}
\item The  vector field $\chi$ is entering at $S^2\times\set{-1}$ and points out at $S^2\times\set{1}$
\item The  vector field $\chi$ is such that the chain recurrent set consists of 2  source  singularities, $p_1$ and  $p_1'$,  2 sinks singularities, $p_2$ and $p_2'$, and  2 periodic saddles, $p_3$ and $p_3'$.
 \item The intersection of the invariant manifolds of the saddles,  with the boundary of $S^2\times{[-1,1]}$, are disjoint circles that we name as follows:
  \begin{itemize}
  \item $W^{s}(p_3)\cap S^2\times{[-1,1]} = c_0$ in $S^2\times\set{-1}$,
  \item $W^{s}(p'_3)\cap S^2\times{[-1,1]} = c'_0$ in $S^2\times\set{-1}$,
\item $W^{u}(p_3)\cap S^2\times{[-1,1]} =c_1$ in $S^2\times\set{1}$,
\item  $W^{u}(p'_3)\cap S^2\times{[-1,1]} = c'_1$ in $S^2\times\set{1}$.
 \end{itemize}
\item The circle $c_0$ bounds a disc not containing $c'_0$, that we call $D_0$. The circle $c'_0$ bounds a disc containing $c_0$, that we call $D'_0$. And they both bound an open annulus called $A_0$.  Analogously we define $D_1$, $D' _1$ and $A_1$.
\item  The orbit $O(x)$ of a point $x$ in $S^2\times\set{-1}$, crosses  $S^2\times\set{1}$ if and only if $x\in A_0$ and $O(x)\cap S^2\times\set{1}\in A_1$,

\item There is a well defined crossing map $P:A_0\to A_1$. Consider the radial foliation $V_0$ in $A_0$  Then the image of a radial foliation  under $P$ intersect transversally a radial foliation  in $A_1$ and it extends to a foliation in $A_1\cup c_1 \cup c'_1$.
 \end{itemize}
The complement of $S^2\times{[-1,1]}$ in $S^3$ are 2 balls, one in the basin of attraction of a source $r$ (that has $S^2\times\set{-1}$ in the boundary ), and the other in the basin of attraction of a sink $a$.
\end{theo}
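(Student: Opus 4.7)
The plan is to build $\chi$ on $S^2 \times [-1,1]$ from two copies of an elementary plug joined by a flow-through annular corridor, and then to extend to $S^3$ by placing a source in one complementary ball and a sink in the other.

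First, I construct an \emph{elementary plug} $Q$: a sub-ball of $S^2 \times [-1,1]$ whose chain recurrent set consists of exactly one source, one sink and one hyperbolic periodic saddle orbit, and such that the stable manifold of the saddle meets $\partial Q$ in a single circle on the incoming side, while its unstable manifold meets $\partial Q$ in a single circle on the outgoing side. A concrete local model for the periodic saddle is the linear solid-torus flow $\dot x = x$, $\dot y = -y$, $\dot \theta = 1$ on $D^2 \times S^1$, whose core circle is a hyperbolic periodic orbit with local stable manifold $\{x = 0\}$ and local unstable manifold $\{y = 0\}$. I extend this local picture to $Q$ so that off the invariant manifolds every orbit travels from the source $p_1$ to the sink $p_2$, with $p_1$ sitting at the ``inner end'' of the extended stable cylinder of $p_3$ and $p_2$ at the ``inner end'' of its unstable cylinder; a direct inspection then shows that the chain recurrent set of $Q$ is exactly $\{p_1, p_2, p_3\}$.

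Next, I place two disjoint copies $Q$ and $Q'$ of this plug inside $S^2 \times [-1,1]$ so that their incoming boundary disks on $S^2 \times \{-1\}$ are the nested disks $D_0$ and $S^2 \setminus D'_0$ (and analogously on $S^2 \times \{1\}$), and so that in the complement of the plugs the ambient vector field is essentially vertical from $t = -1$ to $t = 1$. This realises the circle configuration $c_0 = \partial D_0$, $c'_0 = \partial D'_0$ together with the annulus $A_0$ on the bottom, and analogously $c_1, c'_1, A_1$ on top. The six required orbits form the chain recurrent set of the maximal invariant set of $\chi$ in $S^2 \times [-1,1]$, and an orbit starting on $S^2 \times \{-1\}$ crosses to $S^2 \times \{1\}$ if and only if it starts in $A_0$, so the crossing map $P \colon A_0 \to A_1$ is well defined and a diffeomorphism.

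The main technical point is the transversality condition on $P$. A purely vertical flow in the corridor would send the radial foliation $V_0$ on $A_0$ to the radial foliation on $A_1$, violating transversality. I fix this by superimposing on the ambient corridor flow a rotational shear whose total angular displacement depends strictly monotonically on the radial coordinate of the starting point; this bends $P(V_0)$ into a spiralling family of arcs meeting the radial foliation of $A_1$ transversely at every point, and extending continuously to $c_1 \cup c'_1$. Controlling this shear against the rate at which orbits converge toward $c_1 \cup c'_1$ along the stable cylinders of $p_3$ and $p'_3$ is the step requiring the most care, and is where I expect the main difficulty. Finally, I extend $\chi$ to the two complementary balls of $S^3$ by placing a source $r$ with $S^2 \times \{-1\}$ transverse and repelling, and a sink $a$ with $S^2 \times \{1\}$ transverse and attracting; the remaining bullets then follow directly from the construction.
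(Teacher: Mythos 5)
Your construction is, at its core, the same as the paper's: a plug built from a planar Morse--Smale picture (one source, one sink, one saddle) thickened by a rotation that turns the saddle into a periodic orbit while leaving the source and sink singular; two such plugs arranged so that the invariant manifolds of the two periodic saddles trace out the nested circles $c_0,c'_0$ and $c_1,c'_1$ bounding the flow-through annuli; a radially varying angular displacement to force the transversality of $P$; and two capping balls carrying $r$ and $a$. The packaging differs. The paper realises $S^2\times[-1,1]$ as two solid cylinders $\mathbb{D}^2\times[-1,1]$ glued along their common lateral boundary, each cylinder being literally a product $C\times S^1$ of a planar flow with a rotation of speed $f(x)$ vanishing exactly on the axis and on the lateral boundary (the two halves rotating in opposite senses), so that each half of $A_0$ lies \emph{inside} one of the two plugs; you instead embed two disjoint plugs and superimpose a shear on an ``essentially vertical'' corridor over $A_0$ between them. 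Where this difference matters is exactly the step you defer. In your setup the total angular displacement of a crossing orbit is the sum of the corridor shear and the winding accumulated while the orbit passes near $p_3$ or $p'_3$ inside the plugs --- and orbits starting near $c_0$ or $c'_0$ spend unbounded time there, so that contribution dominates and is not governed by your corridor modification; you must rule out cancellation between the two contributions, which you flag but do not do. The paper's product structure dissolves this issue: the angular displacement is the single integral of $f$ along the orbit, and Lemma~\ref{l.transvers} obtains $\partial P_\theta/\partial r\neq 0$ on all of $A_0$ at once from the radial dependence of $f$. To close your argument you would need to choose the corridor shear and the two saddle rotations with consistent signs and verify global strict monotonicity of the total displacement in $r$ --- or simply adopt the paper's device of making the rotation speed a single global function of the planar coordinate. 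This is a fillable gap rather than a wrong approach, but as written the transversality bullet (the only delicate one) is asserted, not proved.
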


\begin{figure}[htb]
\begin{center}
\includegraphics[width=0.78\linewidth]{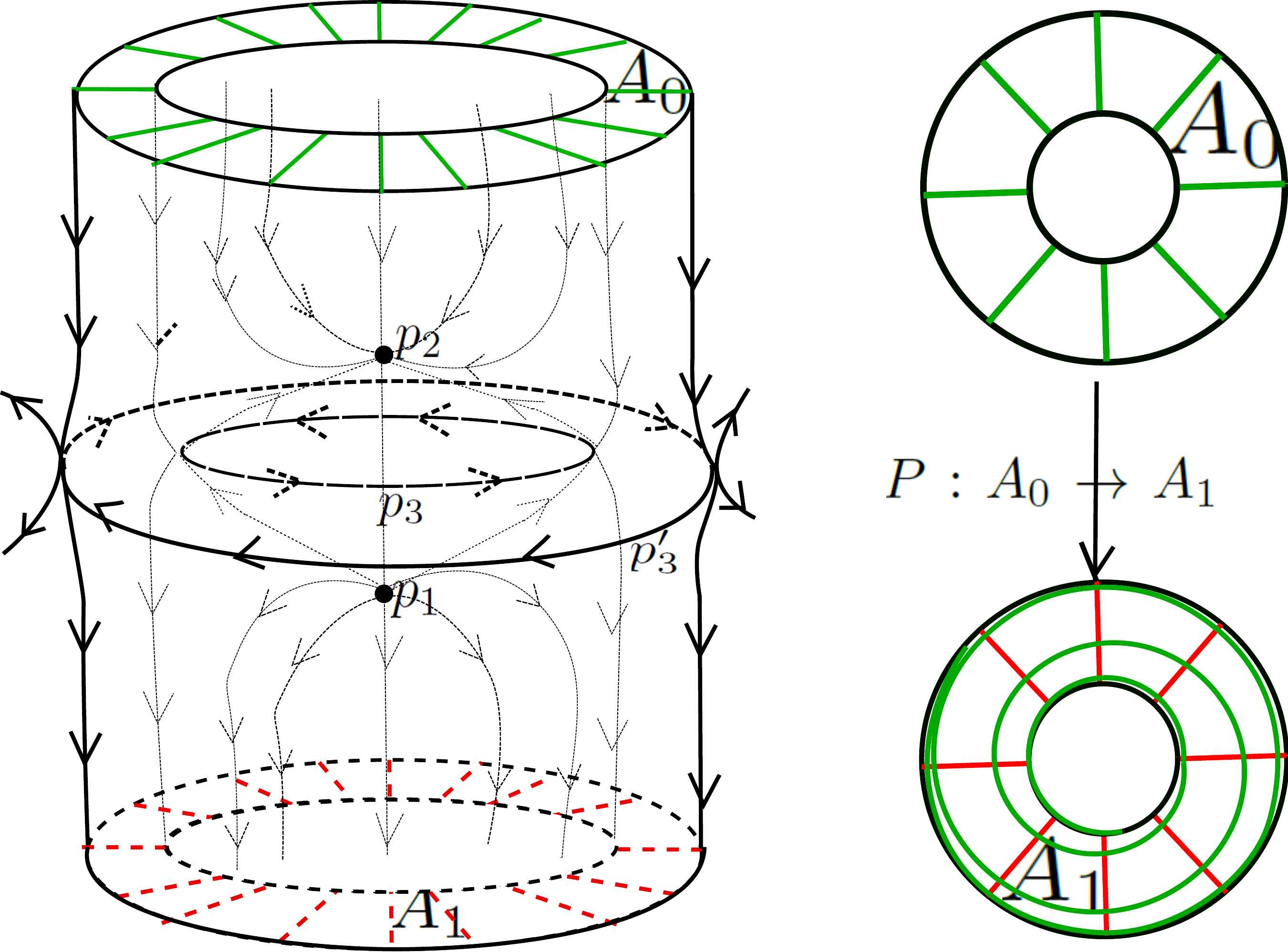}
\end{center}
\caption{The  vector field $\chi$ in  $S^2\times{[-1,1]}\subset S^3$ and the map $P:A_0\to A_1$}
\end{figure}

\begin{figure}[htb]
\begin{center}
\includegraphics[width=0.72\linewidth]{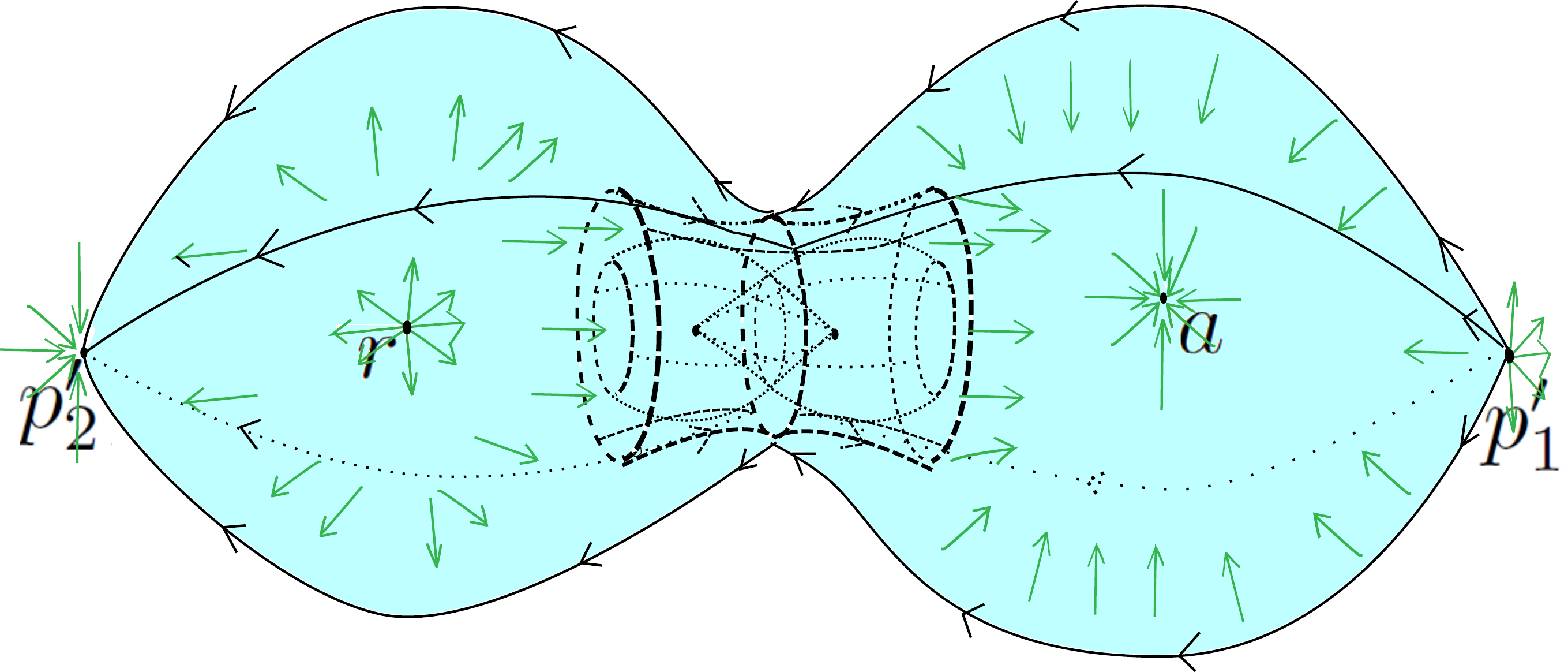}\label{compflow1.f}
\end{center}
\caption{The vector field $\chi$ in $S^3$.}
\end{figure}

This vector field will be used to select orbits in the unstable manifold of  $L_r$ and transform them in such a way that these orbits are  in the transverse intersection of the unstable manifold of  $L_r$
 with the stable manifold of $L_a$.
\subsection{Gluing the pieces: defining a flow on $S^3$}\label{completeflow3}
Let us consider the vector field $\chi$ defined above, in $S^2\times{[-1,1]}$.
Instead of completing the vector field to $S^3$ by gluing two balls that one contains a sink and the other a source, we will glue the balls $U_a$ from 
subsection {\ref{ball}  and another ball $U_r$ with a vector filed which is the reverse time of the vector field in $U_a$. we can do this since the vector field in $U_a$ is transverse to the boundary and pointing in.
\begin{figure}[htb]
\begin{center}
\includegraphics[width=0.40\linewidth]{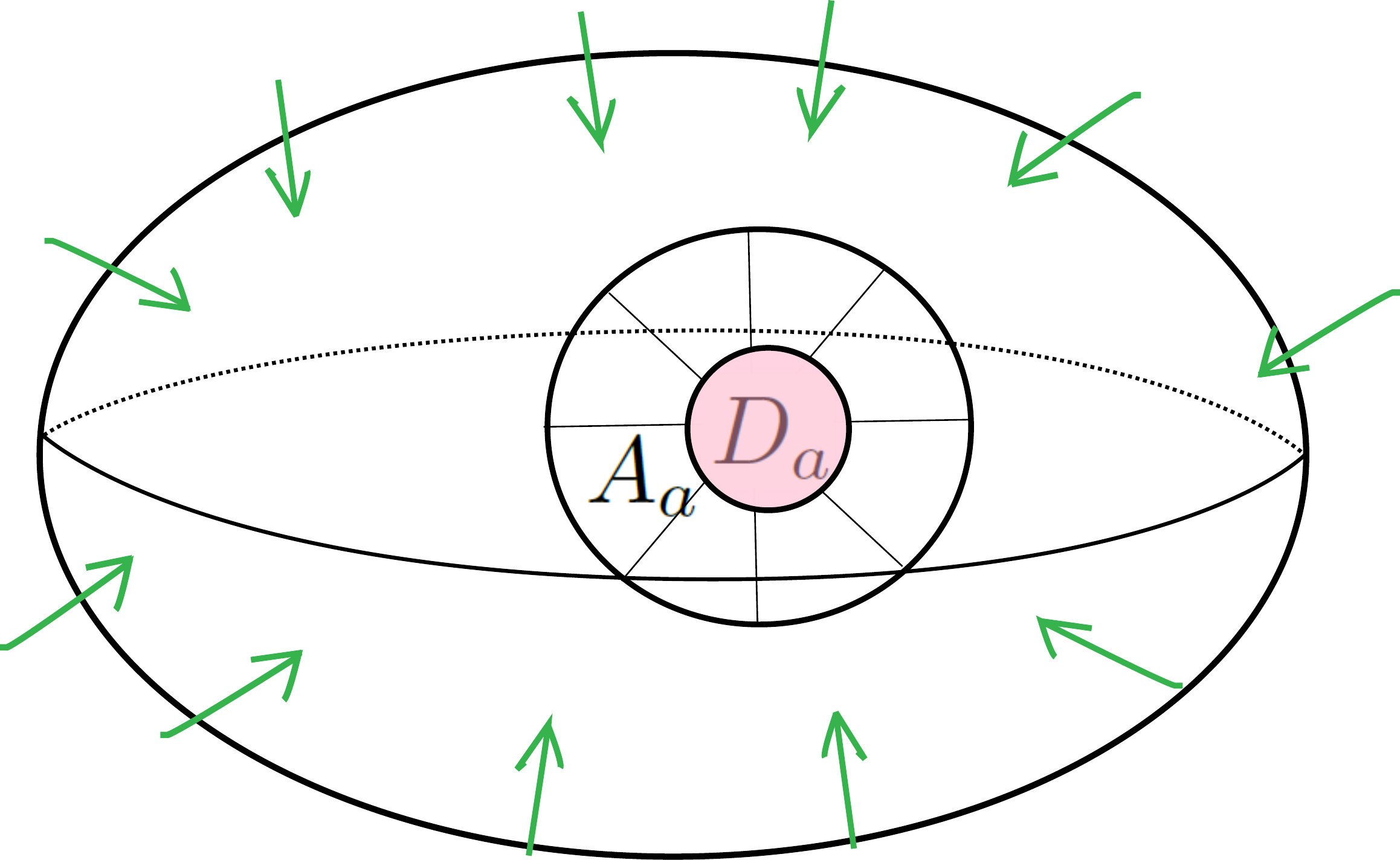}
\end{center}
\caption{The ball $U_a$.}
\end{figure}

Recall that from subsection {\ref{ball}} we have that
\begin{itemize}
  \item The boundary of $U_a$ is $S^2$
  \item There is an annulus $A_a$ in $S^2$
 such that the strong stable manifolds of $L_a$ intersect  $A_a$ along a radial foliation
  \item The annulus $A_a$ bounds a disc $D_a$ containing the intersection of the stable manifold of $p$ and does not intersect the stable manifold of  the other extra singularity.
\end{itemize}
we choose the gluing map so that 
\begin{itemize}
  \item  $A_a$ is mapped to an annulus containing $A_1$, a radial foliation of $A_a$ is send to cut $A_1$ in a radial foliation.
  \item $D_a$ is mapped to the interior of  $D_1$.
\end{itemize}
\begin{figure}[htb]
\begin{center}
\includegraphics[width=0.63\linewidth]{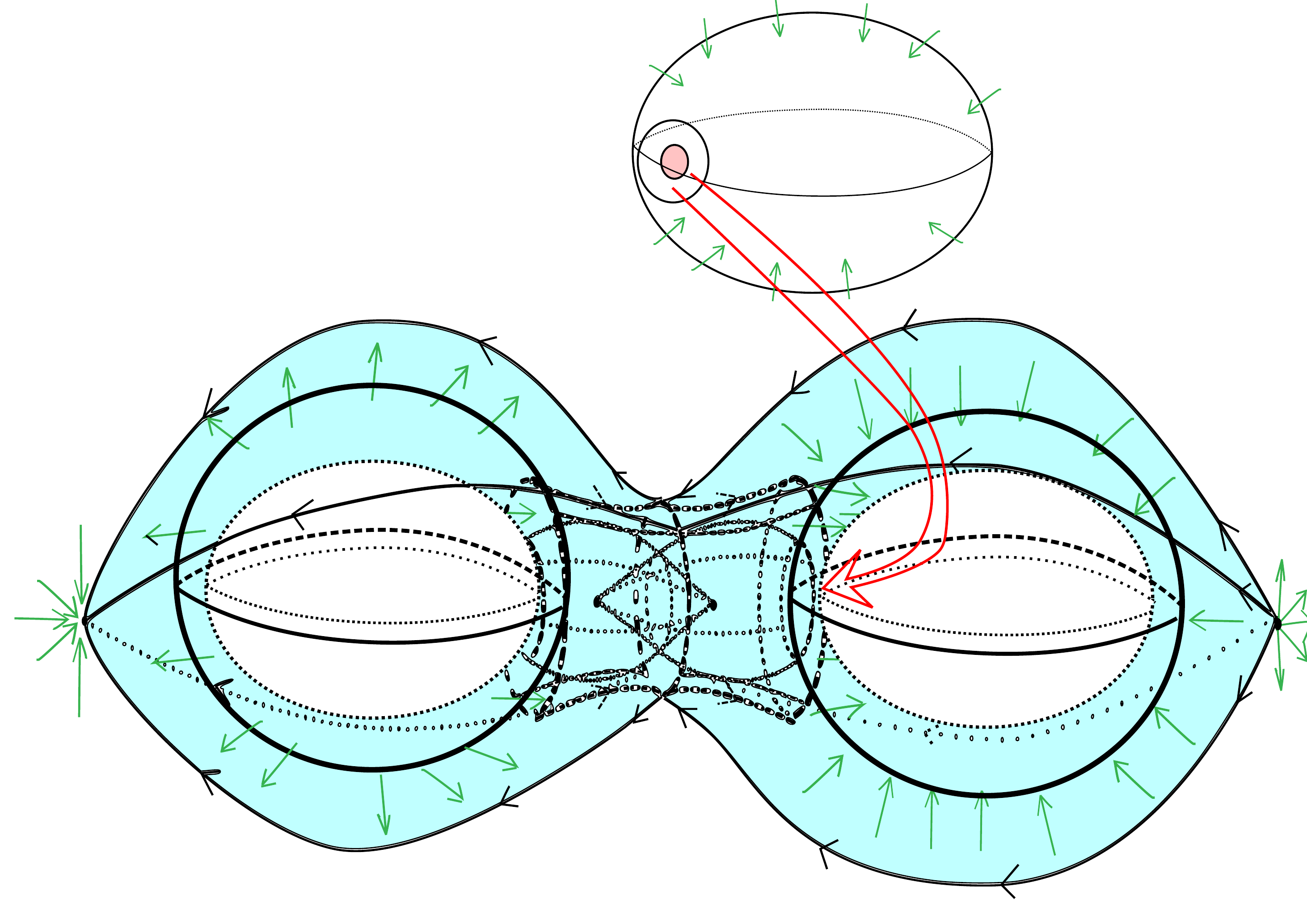}
\end{center}
\caption{Gluing $U_a$ to $S^2\times\set{1}$ }
\end{figure}
We do the same for $U_r$

Note that by doing this process we do not create any new recurrent orbits.
We call the resulting vector field in $S^3$,
$X$.

\subsection{The  filtrating neighborhood}

Let us consider $X$ from subsection \ref{completeflow3}.

If we remove some small neighborhoods inside the basin of  the 2 sources $p_1$ and  $p_1'$ , we get a repelling region $V_r$.

If we remove some small neighborhoods inside the basin of  the  2 sinks $p_2$ and $p_2'$, we get an attracting region $V_a$.

The resulting open set $$U=V_a\cap V_r$$
is a filtrating  neighborhood. We call the maximal invariant set in it $\Lambda$.

\begin{lemm}
For the vector field $X$ the maximal invariant set $\Lambda\subset U$ is  strong multi singular hyperbolic.
\end{lemm}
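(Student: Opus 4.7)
The plan is to apply Corollary~\ref{c.muchasmashyp} to $\Lambda$. For this I would need to (i) decompose $\Lambda\cap\cR$ into chain-recurrent pieces on which the reparametrized linear Poincar\'e flow $\Psi^t$ is hyperbolic with a uniform $s$-index, and (ii) check that every wandering orbit of $\Lambda$ misses the escaping stable/unstable manifolds of the singularities and carries invariant conefields which meet transversally.

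First I would identify the chain-recurrent pieces of $\Lambda$. The filtration has removed neighbourhoods of the four sources $p_1,p_1'$ and four sinks $p_2,p_2'$, so what is left consists of the geometric Lorenz attractor $L_a$ (containing $\sigma_a$), its time-reversed analogue $L_r$ (containing $\sigma_r$), the additional hyperbolic saddle singularities $p,p'\subset U_a$ together with their reversed-time counterparts in $U_r$, and the two hyperbolic periodic saddles $p_3,p_3'$ coming from the plug construction.

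Next I would establish hyperbolicity of $\Psi^t$ on each such piece, all with $s$-index $1$ in the $2$-dimensional normal bundle $\cN$. For $L_a$ the Guckenheimer--Williams construction gives singular hyperbolicity; the escaping strong stable direction of $\sigma_a$ with respect to $U_a$ is exactly the $1$-dimensional strong stable $E^{ss}$, so the central space $E^c_{\sigma_a,U_a}=E^s\oplus E^u$ is $2$-dimensional and carries a linear hyperbolic flow with exponents $\lambda^s<0<\lambda^u$. Combined with the hyperbolic periodic orbits accumulating on $\sigma_a$, this yields the required hyperbolicity of $\Psi^t$ on $B(L_a)$, with $1$-dimensional stable and unstable summands. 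The piece $L_r$ is handled symmetrically. For the singleton saddle classes $\{p\},\{p'\}$ and their reverses, hyperbolicity of the cocycle reduces to hyperbolicity of $D\phi^t$ restricted to the central space. For $p_3,p_3'$ hyperbolicity of $\Psi^t$ is equivalent to hyperbolicity of the Poincar\'e return map, and the reparametrization $h$ stays bounded along a closed orbit and so does not affect hyperbolicity. In every case the $s$-index is $1$.

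Finally I would verify the wandering-orbit conditions. The escaping stable manifold of $\sigma_a$ in $U_a$ is the strong stable leaf $W^{ss}(\sigma_a)$, whose trace on the annulus $A_a\subset\partial U_a$ of \S\ref{ball} is the radial foliation. By Theorem~\ref{t.tube}, the plug map $P:A_0\to A_1$ sends the radial foliation $V_0$ to a foliation transverse to the radial one on $A_1$; since the gluing of Section~\ref{completeflow3} identifies $A_1$ with an annulus inside $A_a$ mapping radial to radial, no orbit crossing from the plug into $U_a$ lies on $W^{ss}(\sigma_a)$. A symmetric argument in backward time rules out crossings of $W^{uu}(\sigma_r)$, and the heteroclinic orbits from $p,p'$ (and their reverses) into the Lorenz pieces are transverse to the corresponding strong foliations already in the Guckenheimer--Williams model. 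This same transversality of foliations then converts into transversality of the stable cone of $L_a$ with the unstable cone pushed forward from $L_r$ along each wandering orbit, and analogously for the other pairs of pieces. The hard part will be carrying out this last translation rigorously near the singularities $\sigma_a,\sigma_r$, where the classical linear Poincar\'e flow degenerates: one must work in the blow-up framework of~\cite{GLW,BdL} to convert the geometric transversality provided by Theorem~\ref{t.tube} and the construction of $A_a$ into transversality of invariant conefields for the extended cocycle $\Psi^t$, thereby meeting every hypothesis of Corollary~\ref{c.muchasmashyp} and concluding that $\Lambda$ is strong multisingular hyperbolic.
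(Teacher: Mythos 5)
Your overall architecture matches the paper's: decompose $\La\cap\cR$ into the pieces $L_a$, $L_r$ and the periodic saddles, check hyperbolicity of $\Psi^t$ with $s$-index $1$ on each, and then invoke Corollary~\ref{c.muchasmashyp} together with the twist of the radial foliations under the crossing map $P$ (Lemma~\ref{l.transvers}) to handle the wandering orbits from $A_0$ to $A_1$. That part is fine.

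However, there is a genuine gap at the central step. You write that singular hyperbolicity of $L_a$, the hyperbolicity of the linear flow on the two--dimensional central space $E^c_{\sigma_a,U_a}$, and the hyperbolicity of the periodic orbits together ``yield the required hyperbolicity of $\Psi^t$ on $B(L_a)$.'' This is precisely the assertion that needs proof, and it does not follow formally from singular hyperbolicity. The cocycle in question is the \emph{reparametrized} flow $\Psi^t(L,u)=h(L,t)\,\psi^t_{\cN}(L,u)$ with $h(L,t)=\|D\phi^t(u)\|/\|u\|$ for $u\in L$; along a regular orbit of $L_a$ that lingers for a long time $T$ near $\sigma_a$, the factor $h$ grows like $e^{\lambda_1 T}$ (expansion of the flow direction), while $\psi^t_{\cN}$ restricted to $\cN^s$ contracts only like $e^{\lambda_2 T}$. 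So the contraction of $\cN^s$ under $\Psi^t$ survives only because of an eigenvalue inequality ($\lambda_1+\lambda_2<0$), and this is exactly why the construction fixes the expansion rate of $\sigma_a$ between $\sqrt2$ and $2$ and the strong contraction rate between $4$ and $5$: the paper's proof consists of checking that $h$, expanding at rate at most $2$, cannot overcome a contraction of rate at least $4$, and dually that area expansion of $E^{cu}$ forces $h\cdot\psi^t_{\cN}$ to expand $\cN^u$. You only discuss the reparametrization $h$ along closed orbits, where it is subexponential and harmless; the singular case, which is the entire content of the lemma for the pieces $L_a$ and $L_r$, is skipped. Without this quantitative verification the claimed hyperbolicity of $\Psi^t$ over $B(L_a)$ and $B(L_r)$ is unjustified.
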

\proof
The Lorenz attractor is singular hyperbolic, i.e. $$T_xS^3=E^{ss}\oplus E^{cu}\,\,\,\,\,\,\,\,\text{ for all } x\in L_a$$ (see \cite{MPP}). The strong stable space of $L_a$ is escaping, and therefore the center space is  $E^{cu}$. The singularities in $L_a$ are strong Lorenz like, and in fact, the expansion rate can never be bigger that $2$ while the contraction rate is always bigger that $4$. As a consequence $$\Psi^t(L,u)= h(L,t)\cdot \psi_{_\cN}^t(L,u)$$ still contracts $\cN^s(L)$ since the biggest possible expansion rate for  $h(L,t)$ is smaller than $2$ . Since $E^{cu}$ expands volume, that means that , $$\Psi^t(L,u)= h(L,t)\cdot \psi_{_\cN}^t(L,u)$$  expands $\cN^u(L)$.

The periodic orbits are also  strong multisingular hyperbolic since $h(L,t)$ does not  expand or contract exponentially along a periodic orbit.

We need to check the the strong multisingular hyperbolicity in the wondering orbits that go from $A_0$ to $A_1$.
For this, lemma \ref{c.muchasmashyp} tells us we need to check that the stable and unstable spaces that extend along this orbits, intersect transversely.
This is a consequence of lemma \ref{l.transvers} and the fact that the stable foliation of $L_a$ intersects $A_1$ radially, and the unstable foliation of $L_r $ intersects $A_0$ radially.
\endproof

\section{A multisingular hyperbolic set in $M^5$}\label{s.main}
The aim of this section is to find a chain recurrent set that is multisingular hyperbolic with 2 singularities of different indexes.
 For this, the strategy will be to multiply the vector field $X$ in $S^3$ from section  \ref{attractor-repeller} times  $\mathbb{RP}^2$ with a simple vector field. Then modify the resulting set to obtain new recurrence.

The following Lemma will be proven in subsection\ref{ss.propY}.
\begin{lemm}\label{propY}
There exist a vector field $Y$ in $\mathbb{RP}^2$ with the following properties:
\begin{itemize}
\item $Y$ is a $\mathcal{C}^{\infty}$ vector field

\item It has 3 singularities: a saddle singularity $s$, a source $\alpha$ and a sink $\omega$. 
\item The contracting and expanding  Lyapunov exponents of the saddle are equal in absolute value ($\lambda_{sss}=-\lambda_{uuu}$), and $\lambda_{uuu}>> 6$.
\item One of the stable branches of $s$ (that is an orbit) has $\alpha$-limit  $\alpha$.
and one of the unstable branches of $s$ (that is an orbit) has $\omega$-limit   $\omega$.
\item The other two branches form an  orbit with $\alpha$-limit and $\omega$-limit in $s$ and we call this orbit $\gamma$.
\item There is a transverse section to $\gamma$ and to the flow, that we call $T=[-1,1]\times\set{a}$,  $T\cap\gamma=0\times{a}$ and the flow of $Y$, $\phi^Y(s,t)$ is such that:
    \begin{itemize}
    \item  If $s=(x,a)\in T$ is such that $x>0$, then $\phi^Y(s,t)$  does not cross $T$  for any $t>0$ and has $\omega$-limit in $\omega$. And for $t<0$ there exists only one $t_s<0$ such that $\phi^Y(s,t_s)=s'\in T$ with $s'=(x',a)$, $x'<0$ and the $\alpha$-limit of $s$ is $\alpha$.
    \item If $s=(x,a)\in T$  and $x<0$, then  $\phi^Y(s,t)$ does not cross $T$  for any $t<0$ and has $\alpha$-limit in $\alpha$. And for $t>0$ there exists only one $t_s>0$ such that $\phi^Y(s,t_s)=s'\in T$ with $s'=(x',a)$, $x'>0$ and the $\omega$-limit of $s$ is $\omega$.

    \end{itemize}
\end{itemize}
\end{lemm}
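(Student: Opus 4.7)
The plan is to construct $Y$ by first prescribing local linear models at each of the three prescribed singularities and then interpolating globally, exploiting the non-orientable topology of $\mathbb{RP}^2$ to force the transverse-section crossing behavior.

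First, I would fix the local models. Near $s$, take coordinates $(u,v)$ in which $Y=(-\lambda u,\lambda v)$ with $\lambda\gg 6$, so the Lyapunov exponents are exactly $\pm\lambda$ as required; near $\alpha$ take $Y=(u,v)$ and near $\omega$ take $Y=(-u,-v)$. The Poincar\'e--Hopf indices of these singularities are $-1,+1,+1$, summing to $\chi(\mathbb{RP}^2)=1$, so the desired phase portrait is topologically admissible.

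Next, I would realize the global picture by presenting $\mathbb{RP}^2$ as a closed disk $D$ with antipodal identifications on $\partial D$. Place $\alpha$ and $\omega$ in $\mathrm{int}(D)$, and place $s$ at a single point $p$ of $\partial D/\!\sim$. The image of $\partial D\setminus\{p\}$ in the quotient is a single smooth arc, which I arrange to coincide with a trajectory $\gamma$ of $Y$ realizing the desired homoclinic loop: two of the four branches of $s$ lie along $\gamma$, while the remaining stable branch extends into $\mathrm{int}(D)$ with $\alpha$-limit $\alpha$, and the remaining unstable branch extends into $\mathrm{int}(D)$ with $\omega$-limit $\omega$. The rest of the interior is arranged so that every other orbit travels from $\alpha$ to $\omega$. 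Smooth interpolation between the local linear models and the prescribed boundary dynamics is then done with a partition of unity, yielding a $C^\infty$ vector field.

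Finally, I would verify the transverse-section property. Since $\gamma$ is non-contractible --- it generates $\pi_1(\mathbb{RP}^2)$ --- any small transverse segment $T$ through an interior point of $\gamma$ has the feature that shadowing $\gamma$ once around reverses its two local sides (the M\"obius flip). Thus an orbit starting at $(x,a)\in T$ with $x<0$ shadows $\gamma$ closely in forward time, reaches $T$ again exactly once at some $(x',a)$ with $x'>0$, and is then expelled along the ``free'' unstable branch of $s$ to $\omega$; hyperbolicity of $s$ together with the uniqueness of the first-return map rules out further crossings. The symmetric statement for $x>0$ in forward and backward time follows by running the same argument in the other direction. The main subtlety in the whole construction is forcing the homoclinic connection, which is codimension-one among generic vector fields; here it is built in for free, since the identification of $\partial D$ is part of the topological presentation of $\mathbb{RP}^2$, and we simply prescribe the boundary dynamics so that the identified arc is a single orbit.
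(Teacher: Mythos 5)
Your construction is essentially the paper's: both realize the saddle's homoclinic loop as a one-sided (non-contractible) simple closed curve in $\mathbb{RP}^2$, send the two free separatrices to a source and a sink lying in the complementary disk, and derive the crossing behavior of $T$ from the orientation-reversing holonomy around the one-sided loop --- the paper just builds the M\"obius band explicitly by gluing two boundary arcs of a linear saddle box via the orientation-reversing map $d(x)=-cx/a$, whereas you present $\mathbb{RP}^2$ as a disk with antipodal boundary identification and put the loop on the identified boundary circle. The one step to tighten is the ``partition of unity'' interpolation: naively averaging local models can create additional zeros, which would violate the requirement of exactly three singularities; the paper sidesteps this by assembling the field from explicit linear pieces glued along curves that are either orbit arcs or transverse to the flow, and your argument would need an analogous explicit assembly (your Poincar\'e--Hopf count shows the portrait is admissible, not that the interpolation realizes it).
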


\subsection{The vector field in $M^5$}
We start by considering the vector field $Z_{id}=(X,Y)$ in the manifold $M^5=S^3\times\mathbb{RP}^2$ and it's flow $\phi_{id}$ .
Let us define the section $$\sum=S^3\times T$$ which is transverse to $Z_{id}$, and a flow-box $\sum\times[-1,0]$. 
\begin{prop}
Let $H:\sum\to\sum$ be a $\mathcal{C}^{\infty}$ diffeomorphism isotopic to identity and that is the identity on the boundary. 
There exist a  flow $\phi_{H} $ in  $M^5$, associated to a $\mathcal{C}^{1}$ vector field $Z_H$, such that $\phi_H=\phi_{id}$ in the complement of the flow-box $\sum\times[-1,0]$, and in the flow-box $(H(z),0)=\phi_H((z,-1),1)$.

\end{prop}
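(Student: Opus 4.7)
The plan is to build $Z_H$ inside the flow-box as the push-forward of $Z_{id}$ by a diffeomorphism of the flow-box that implements $H$ between the entry and exit sections, while leaving $Z_H=Z_{id}$ everywhere else.

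First, I would choose a smooth isotopy $\{H_s\}_{s\in[-1,0]}$ of $\Sigma$ with $H_{-1}=\mathrm{id}_\Sigma$ and $H_0=H$, arranging three extra properties: (a) $H_s=\mathrm{id}$ for $s$ in a neighborhood of $-1$; (b) $H_s=H$ for $s$ in a neighborhood of $0$; (c) $H_s=\mathrm{id}$ on some fixed neighborhood of $\partial\Sigma$ in $\Sigma$, for every $s$. The hypothesis that $H$ is isotopic to the identity produces a first isotopy; reparametrizing in $s$ by a smooth function $[-1,0]\to[-1,0]$ that is constant near the endpoints yields (a) and (b), and a bump-function interpolation with the identity near $\partial\Sigma$ yields (c), using crucially that $H|_{\partial\Sigma}=\mathrm{id}$.

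Next, in flow-box coordinates where $Z_{id}$ is $\partial_s$, I would define the diffeomorphism $\Phi:\Sigma\times[-1,0]\to\Sigma\times[-1,0]$ by $\Phi(z,s)=(H_s(z),s)$ and set $Z_H:=\Phi_*Z_{id}$ in the flow-box, $Z_H:=Z_{id}$ outside. A direct calculation gives $\Phi_*\partial_s|_{(w,s)}=(\xi_s(w),1)$, where $\xi_s(w):=\partial_s H_s(H_s^{-1}(w))$ is the time-dependent vector field generating the isotopy. By (a), (b), (c), $\xi_s$ vanishes in a neighborhood of the entire boundary of the flow-box, so $Z_H=Z_{id}$ there and the two definitions glue to a $C^\infty$ (in particular $C^1$) vector field on $M^5$. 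Since the flow of $\Phi_*\partial_s$ is $\Phi\circ(\text{flow of }\partial_s)\circ\Phi^{-1}$, and $\Phi(z,-1)=(z,-1)$ by $H_{-1}=\mathrm{id}$, one computes $\phi_H((z,-1),1)=\Phi(z,0)=(H_0(z),0)=(H(z),0)$, as required.

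The main technical step is (c): producing an isotopy from $H$ to $\mathrm{id}$ that stays equal to the identity on a neighborhood of $\partial\Sigma$ throughout. This is where one really uses the hypothesis that $H|_{\partial\Sigma}=\mathrm{id}$; once this isotopy is in hand, the rest is a routine push-forward computation together with the constant-near-endpoints trick to control smoothness across the top and bottom faces of the flow-box.
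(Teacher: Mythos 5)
Your proposal is correct and follows essentially the same route as the paper: both use an isotopy $\{H_s\}$ from the identity to $H$ to deform the product flow inside the flow-box (the paper defines the deformed flow directly via the isotopy and differentiates; your push-forward $\Phi_*Z_{id}$ with $\Phi(z,s)=(H_s(z),s)$ produces the same object). Your extra care in making the isotopy constant near $s=-1,0$ and equal to the identity near $\partial\Sigma$ is a welcome refinement that justifies the smooth gluing the paper only asserts.
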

\proof
Since $H $ is isotopic to the identity we have that there exist a diffeomorphism $F:\sum\times[-1,0]\to\sum$ such that $F(\sum, -1)=id$ and $F(\sum, 0)=H$.
We also have that there exist $F':\sum\times[-1,0]\to\sum$ such that $F'(\sum, -1)=H^{-1}$ and $F'(\sum, 0)=id$.
Let us define the flow $\phi_H$ as follows:
\begin{itemize}
\item $\phi_H(y,t)=\phi_{id}(y,t) $ for every $t$ such that $\phi_H(y,t)\notin \sum\times[-1,0]$
\item If $t_0$ is such that $\phi_H(y,t_0)\in  \sum\times\set{-1}$ then $$\phi_H(y,t)=F(\phi_{id}(y,t_0),s)\,,$$  for every  $s=t-1-t_0$  such that $-1\leq  s \leq0$.
\item If $t_1$ is such that $\phi_H(y,t_1)\in  \sum\times\set{0}$ then $$\phi_H(y,t)=F'(\phi_{id}(y,t_1),s)\,,$$  for every  $-s=t-t_1$  such that $-1\leq  s \leq0$.
\end{itemize}
Now we define the vector field $Z_H$ by taking at any point,  the derivative (on $t$) of $\phi_H(y,t)$ and since $\phi_H(y,t)$ is sufficiently smooth, then so  is $Z_H$.
\endproof

\subsubsection{ A filtrating region for $Z_H$ }


We recall that $U$ is the filtrating region in $S^3$ for $X$, defined in section \ref{attractor-repeller}.
We define now the filtrating region in $M^5$ that is interesting to us:
We consider a repelling region $u_{\alpha}\subset \mathbb{RP}^2$ of $\alpha$ for $Y$, such that $\alpha$ is the maximal invariant set in $u_\alpha$. Similarly, consider  a trapping region $u_{\omega}\subset \mathbb{RP}^2$ 
We take  the respective repelling and trapping regions of this singularities in $M^5$. We define the repelling region $U_{\alpha}=S^3\times u_{\alpha}$ and the trapping region $U_{\omega}=S^3\times u_{\omega}$.
We define as well $$V=[U\times \mathbb{RP}^2]/\set{U_{\alpha}\cup U_{\omega}}\,.$$

Let us consider the maximal invariant set for $Z_{id}$ in $V$ that we call $\Lambda_{id}$.
\begin{prop}\label{p.max1}
The maximal invariant set $\Lambda_{id}$ in $V$ (for $Z_{id}$) intersects $\sum$. For any $H $ as above, any orbit in the maximal invariant set $\Lambda_H\in V$ (for $Z_{H}$) either crosses $\sum$ or is contained in  $S^3\times \set{s}$.
\end{prop}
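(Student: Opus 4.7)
The plan is to exploit the product structure of $Z_{id}$ to identify $\Lambda_{id}$ explicitly, and then to argue that since the modification $H$ is supported in a flow-box around $\Sigma$, any orbit of $Z_H$ that avoids $\Sigma$ must actually be an orbit of $Z_{id}$.

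First I would compute $\Lambda_{id}$ directly. Since $Z_{id}=(X,Y)$, the flow $\phi_{id}^t$ is the product $(\phi_X^t,\phi_Y^t)$, and $V=U\times\bigl(\mathbb{RP}^2\setminus(u_\alpha\cup u_\omega)\bigr)$. So a point $(x,y)$ lies in $\Lambda_{id}$ iff $x\in\Lambda$ (the maximal $X$-invariant set in $U$) and $y$ lies in the maximal $Y$-invariant set of $\mathbb{RP}^2\setminus(u_\alpha\cup u_\omega)$. Using Lemma~\ref{propY}, the latter set is exactly $\{s\}\cup\gamma$: every other orbit has $\alpha$-limit equal to $\alpha$ (escaping $u_\alpha$ in backward time) or $\omega$-limit equal to $\omega$ (escaping $u_\omega$ in forward time). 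Hence $\Lambda_{id}=\Lambda\times(\{s\}\cup\gamma)$.

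For the first assertion, since by Lemma~\ref{propY} one has $\gamma\cap T=\{(0,a)\}$, it follows immediately that $\Lambda_{id}\cap\Sigma=\Lambda\times\{(0,a)\}$, and this is nonempty because $\Lambda\neq\emptyset$ (it contains the strong multisingular hyperbolic set constructed in Section~\ref{attractor-repeller}).

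For the second assertion, take any orbit $O\subset\Lambda_H$ of $\phi_H$ and suppose $O$ does not cross $\Sigma$. The key observation is that the flow-box $\Sigma\times[-1,0]$ is saturated by orbit segments meeting $\Sigma$, so any orbit of $\phi_H$ that enters this flow-box must cross $\Sigma$. Consequently $O$ is disjoint from $\Sigma\times[-1,0]$; but on the complement of the flow-box $\phi_H$ coincides with $\phi_{id}$, so $O$ is also an orbit of $\phi_{id}$ entirely contained in $V$. By the previous step $O\subset\Lambda\times(\{s\}\cup\gamma)$. If the second coordinate of $O$ were in $\gamma$, then under $\phi_{id}$ the orbit would hit $\Sigma=S^3\times T$ at the unique point where $\gamma$ meets $T$, contradicting $O\cap\Sigma=\emptyset$. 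Thus the second coordinate of $O$ is $s$, i.e.\ $O\subset S^3\times\{s\}$.

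The potential pitfall to be careful about is making the flow-box argument rigorous: one must verify that $\Sigma\times[-1,0]\subset V$ (so that the modification takes place inside the region we care about) and that the flow-box really is ``crossed through $\Sigma$'' by every orbit entering it, so that avoiding $\Sigma$ forces avoidance of the flow-box. Both are immediate from the construction of $\Sigma$ as a transverse section to $Z_{id}$ away from $U_\alpha\cup U_\omega$, but they are the only nontrivial geometric inputs; the rest of the proof is then a direct consequence of the product-flow computation of $\Lambda_{id}$.
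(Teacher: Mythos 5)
Your proposal is correct and follows essentially the same route as the paper: identify the maximal invariant set of $Y$ in $\mathbb{RP}^2\setminus(u_\alpha\cup u_\omega)$ as $\set{s}\cup\gamma$ via Lemma~\ref{propY}, use the product structure of $Z_{id}$ to get $\Lambda_{id}=\Lambda\times(\set{s}\cup\gamma)$ (whence the intersection with $\Sigma$, since $\gamma$ meets $T$), and observe that a $Z_H$-orbit avoiding $\Sigma$ avoids the flow-box and is therefore a $Z_{id}$-orbit, forcing its second coordinate to be $s$. Your explicit product computation of $\Lambda_{id}$ is in fact slightly cleaner than the paper's phrasing, which appeals in passing to the relative contraction rates of $Y$ and $X$ — a remark not needed for this statement.
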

\proof
Let us consider the saddle singularity in $Y$ that we called  $s$. By construction, there is a unique orbit of $Y$, formed by  a branch of the stable and unstable manifold of $s$,  that crosses $T$. Since the contraction and expansion rates in $Y$ are stronger than in $X$, then the points in $S^3\times\set{s}$ have a connection between the strong stable and unstable manifolds and the orbits in this connections cross $\sum$.

If the orbit $\gamma_y $ of a point $y=\set{(x,l)}$ never crosses $\sum$ then $$Z_{id}\mid_{\gamma_y}=Z_{H}\mid_{\gamma_y}.$$
Let us see that $\Lambda_{id}$ is contained in  $S^3\times \set{s}$ or it crosses $\sum$.

We take $u_0=\mathbb{RP}^2/u_{\alpha}\cup u_{\omega}$. Then the maximal invariant set in $u_0$ for $Y$ is the saddle $s$ and the saddle connection (the orbit that contains one unstable branch and one stable branch of $s$). All other points have their $\alpha$ and $\omega$-limits in the singularities $\alpha$ and  $\omega$ (see the properties of $Y $ in (\ref{propY})).
So if there is a point $y\in\gamma_y$ such that $y\notin S^3\times \set{s} $ and $\gamma_y\cap \sum=\emptyset$ then the orbit of $l$ by $Y$ has  $\alpha$ or $\omega$-limits in the singularities $\alpha$ and  $\omega$. This implies that $y$ has $\alpha$ and $\omega$-limits in $U_{\alpha}\cup U_{\omega}$. Therefore $\gamma_y\notin \Lambda_{id}$.
\endproof


 \subsection{Some usefull subsets of $S^3$}

Recall that there are 2 saddles singularities in $S^3$, $\sigma_a\subset L_a$ and $\sigma_r \subset L_r$.
Also from the construction of $X$, the wondering orbits with $\omega$-limit in $L_a$ had three possible kinds of $\alpha$-limit
\begin{itemize}
\item a  source called $p_1'$ or $p_1$
\item a chain recurrence class that is just a periodic orbit
\item a subset of $L_r$.
\end{itemize}
Analogously,  the wondering orbits with $\alpha$-limit in $L_r$ had three possible kinds of $\omega$-limit
\begin{itemize}
\item a sink called $p_2'$ or $p_2$
\item a chain recurrence class that is just a periodic orbit
\item a subset of $L_a$.
\end{itemize}

\begin{defi}

\begin{itemize}

\item We define $K_Y+ 1$ as the minimum of the times  for a point in $T$ to return to $T$ for $Y$. 
\item By construction of the Lorenz attractor (see \cite{GuWi}) there is a small linear neighborhood around the singularity $\sigma_a$, in which we can consider the coordinates $(x,y,z)$ to correspond to the strong unstable, weak stable and stable spaces respectively. We can choose thees coordinates so that singularity is approached by orbits of $L_a$ only in one semi space that corresponds to the points with positive $y$ value. We say then that $\sigma_a$ has an \emph{escaping separatrix} $W^{cs-}$ which is the half stable manifold that escapes from a neighborhood of $L_a$. 

  In the same way there is an escaping separatrix $W^{cu+}$ for the singularity $\sigma_r$ in $L_r$.
\item We  choose an orbit $\gamma_{cs} \in W^{cs-}$  such that $\gamma_{cs}$ is tangent to $y$ in the linearized neighborhood of $\sigma_a$ and is in the basin of attraction of a source (we choos $p_1'$ for convenience) for the past. There is a neighborhood of   $\gamma_{cs}$ (with $\sigma_a$ in the boundary)  that we call $D_a$  that is a repelling region.
In the same way we define $D_r$. By choosing these neighborhoods smaller  $D_a\cup D_r$ does not disconnect  $S^3$.
\item We define  the corresponding repelling region in $M^5$, $V_r=\mathbb{RP}^2\times D_r.$
\item We define a compact ball $B_a\subset S^3 $ that intersects $ U$  in a linearized region  of the singularity $\sigma_a$ and it's boundary  intersects $L_a$ only in one point $p$ of the local unstable manifold of $\sigma_a$.
In other words, inthe coordinates given above if
$ (x,y,z)\in B_a$ then $y\geq0$ and it is $0$ only in $p\in W_X^u(\sigma_a)$. 
\item  We define  a compact ball $B_r\subset U$ in the linearized neighborhood of $\sigma_r$,such that 
if $ (x,y,z)\in B_r$ then $y\leq0$ and it is $0$ only in $p'\in W_X^s(\sigma_r)$. 
The ball $B_r$  and $D_r$ can be taken so that there exist $t_0<K_Y$ such that $\phi_X^{t}(B_r)\subset D_r$  for all $t>t_0$.
 Note that here the stable and unstable manifolds refer to de dynamics of $X$.

\end{itemize}
\end{defi}
\begin{figure}[htb]
\begin{center}
\includegraphics[width=0.60\linewidth]{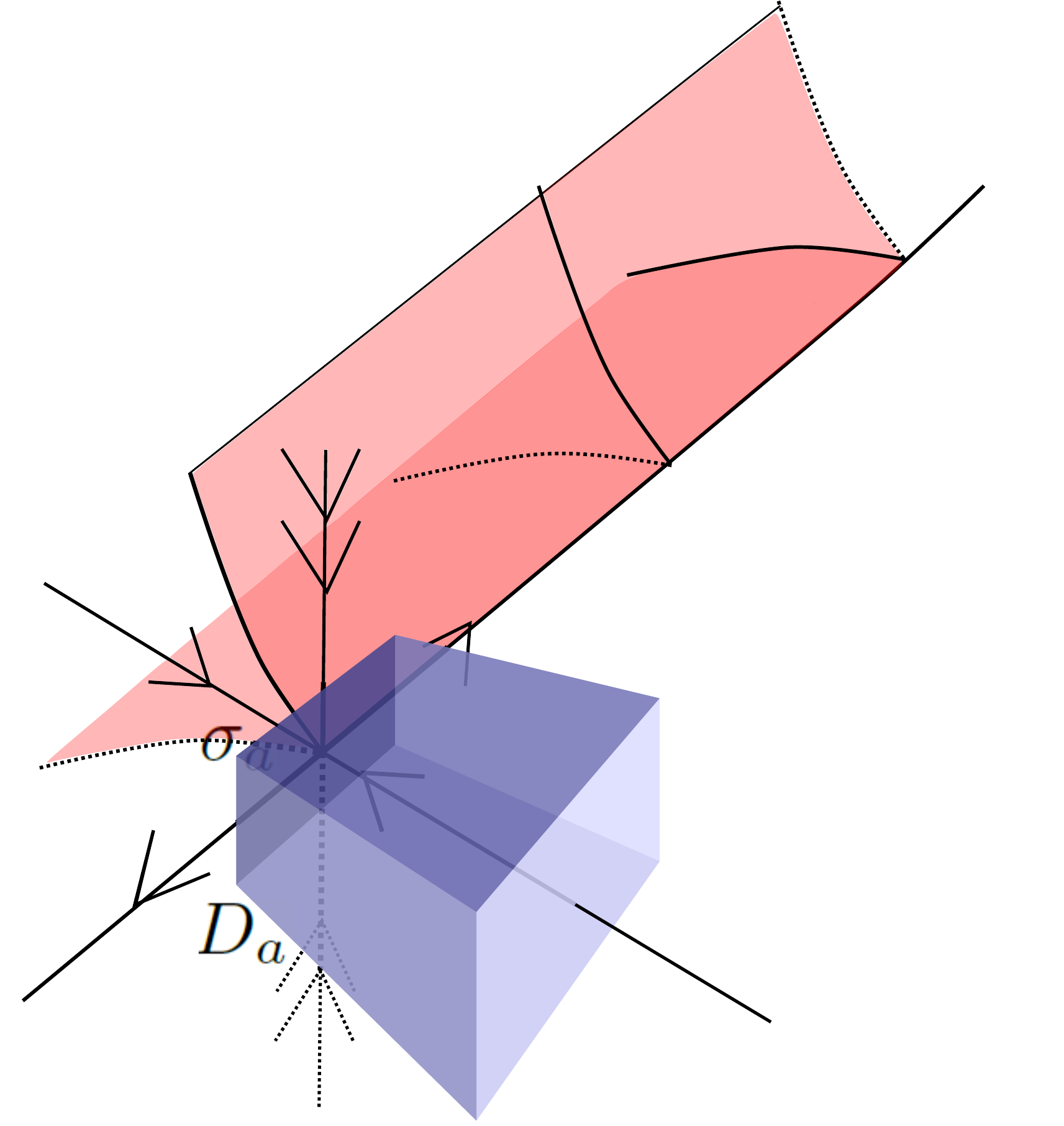}
\end{center}
\caption{The ball $D_a$.}
\end{figure}

 \subsection{Choosing the right isotopy}\label{s.isotopy}
  We  now
choose some more properties on the diffeomorphism $H$ from proposition \ref{p.max1}.
The following Lemma will be proven in subsection \ref{s.conH}.
\begin{lemm}\label{l.propH}
Let $\Sigma\subset S^3 \times \mathbb{RP}^2$ be $\sum=S^3\times T$ where $T\subset  \mathbb{RP}^2$ is a uni-dimensional  transverse section to the vector field $Y$ parametrized by  $[-1,1]$.
There exist a $\mathcal{C}^{\infty}$ diffeomorphism isotopic to identity,  $H:\Sigma\to\Sigma$,
$H(x,l)=(r_l(x),\theta_x(l))$
 where $r_l: \sum\to S^3$, $\theta_x:\sum\to T $, with  following properties:
\begin{itemize}

\item $H$ is the identity on the boundary of $\sum$  and  outside of $V_a$ and in $V_r$.
\item The map $r_l(x)$ is the identity for $l=1$ or  $l=-1$, or if $x\in \overline{D_r}$.
\item 
The image of $r_l(B_a)=B_r$, and $r_l(p)=p'$ for all $l\in[-1/2,1/2]$.
\item  If $l\in[-1/2,1/2]^c$ then $\theta_x(l)=l$.
\item  If $l\in[0,1/2]$ and $x\notin B_a$ $\theta_x(l)>0$ .
\item  If $l\in[0,1/2]$ and $x\in B_a$ then $-\epsilon\leq\theta_x(l)\leq \epsilon$,
\item  The only point $l$ such that $H(p,l)=(p',0)$, is $l=0$.
\item $H(S^3\times\set{0})\times\set{0}\in\Sigma\times\set{0}$$  cuts transversally $$S^3\times\set{0}\times\set{0}\in\Sigma\times\set{0}\,.$
\end{itemize}
\end{lemm}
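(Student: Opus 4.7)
I would build $H(x,l)=(r_l(x),\theta_x(l))$ in two independent stages and then verify the combined map has the required properties.

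\textbf{Spatial component.} The balls $B_a,B_r\subset S^3$ are smoothly embedded closed $3$-balls contained in $V_a$ and disjoint from $\overline{D_r}$, and $S^3\setminus\overline{D_r}$ is connected since $D_a\cup D_r$ does not disconnect $S^3$ by construction. Hence by the isotopy extension theorem there is a smooth ambient isotopy $\{\tilde r_s\}_{s\in[0,1]}$ of diffeomorphisms of $S^3$ with $\tilde r_0=\mathrm{id}$, $\tilde r_1(B_a)=B_r$, $\tilde r_1(p)=p'$, each $\tilde r_s$ equal to the identity on $\overline{D_r}$ and outside a fixed compact subset of $V_a$. Setting $r_l:=\tilde r_{\lambda(l)}$ for a smooth cutoff $\lambda:[-1,1]\to[0,1]$ with $\lambda\equiv 1$ on $[-1/2,1/2]$ and $\lambda(\pm 1)=0$ yields the spatial family with all the stipulated properties.

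\textbf{Transverse component.} I choose three smooth cutoffs: $\mu:S^3\to[0,1]$ supported in $V_a$ with $\mu\equiv 1$ on a neighborhood of $B_a$; a function $\rho:S^3\to\mathbb{R}$ supported in $\{\mu=1\}$ with $\rho(p)=1$ a regular value, so $\Sigma_0:=\rho^{-1}(1)$ is a smooth $2$-submanifold through $p$ (achievable by letting $\rho$ slightly exceed $1$ on an interior region of $B_a$); and strictly increasing smooth $\eta,\zeta:[-1,1]\to[-1,1]$ equal to the identity outside $(-1/2,1/2)$, with $\eta>0$ on $[0,1/2]$, $|\zeta|\le\epsilon$ on $[-1/2,1/2]$, and $\zeta(l)=0\iff l=0$. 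Set
\[
\theta_x(l):=(1-\mu(x))\,l+\mu(x)\bigl[(1-\rho(x))\,\eta(l)+\rho(x)\,\zeta(l)\bigr].
\]
Monotonicity in $l$ makes each $\theta_x$ a diffeomorphism of $[-1,1]$ fixing the endpoints. One then checks directly: $\theta_x=\mathrm{id}$ outside $V_a$ (where $\mu=0$); $\theta_x>0$ on $[0,1/2]$ for $x\in V_a\setminus B_a$; $|\theta_x|\le\epsilon$ on $B_a$ after choosing $\rho$ close enough to $1$ so that $|1-\rho(x)|\,\|\eta\|_\infty$ is absorbed into $\epsilon$; and $\theta_p=\zeta$ vanishes only at $l=0$.

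\textbf{Verification and transversality.} $H$ factors as $(x,l)\mapsto(r_l(x),l)\mapsto(y,\theta_{r_l^{-1}(y)}(l))$: the first is a diffeomorphism because $\{r_l\}$ is a smooth isotopy with smooth inverse, and the second is a fiber-preserving diffeomorphism over $S^3$ with diffeomorphism fibers. An isotopy from the identity to $H$ is obtained by substituting $t\lambda$ and $t\rho$ for $\lambda,\rho$ as $t$ runs from $0$ to $1$. For the transversality condition, the non-trivial part of $H(S^3\times\{0\})\cap(S^3\times\{0\})$ inside $\{\mu>0\}\times\{0\}$ is precisely $r_0(\Sigma_0)\times\{0\}$, and along this $2$-submanifold the differential $d_x\theta_x(0)|_y=-\eta(0)\,d\rho|_y$ is nonzero by regularity of $1$ for $\rho$ and positivity of $\eta(0)$; hence $dH(T_yS^3\times\{0\})$ contains a vector with nonzero $T$-direction component, and the two tangent spaces together span $T_{(r_0(y),0)}\Sigma$.

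\textbf{Main obstacle.} The one delicate coordination is that properties (6), (7) and (8) simultaneously constrain $\rho$ to be close to $1$ on all of $B_a$, to equal $1$ at $p$, and to have $1$ as a regular value (so that $\Sigma_0$ is a $2$-submanifold cut transversely by $d\rho$). These constraints are reconciled by letting $\rho$ slightly overshoot $1$ on a small interior region of $B_a$ around $p$; the overshoot is small enough to be absorbed into the bound $\epsilon$ in (6). Everything else is routine bump-function assembly.
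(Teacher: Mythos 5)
Your two--stage architecture (an ambient isotopy carrying $B_a$ to $B_r$ for the spatial factor, bump functions for the transverse factor, and transversality extracted from a nonvanishing spatial derivative of the transverse displacement along the level set through $p$) is the same as the paper's, and parts of your verification are actually tidier than the paper's: your $\theta_x$ genuinely fixes the endpoints of $T$ and equals the identity for $l\notin(-1/2,1/2)$, which the paper's formula $\theta_x(l)=l+h(l)+g(x)$ fails to do wherever $g\neq 0$. There is, however, one genuine gap, and it sits exactly on the property that the rest of Section \ref{s.main} leans on. Your $\theta_x(l)$ reduces to the identity wherever $\mu(x)=0$, so $\theta_x(0)=0$ there. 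This contradicts your own claim that $\theta_x>0$ on $[0,1/2]$ for $x\in V_a\setminus B_a$: ``$\mu$ supported in $V_a$ and $\equiv 1$ near $B_a$'' does not make $\mu$ positive on all of $V_a\setminus B_a$. The point is not cosmetic. Propositions \ref{p.max2} and \ref{p.max3} use precisely that every $(x,0)\in\Sigma$ with $x\notin B_a$ is displaced to \emph{strictly positive} $T$--coordinate, so that its forward $Z_H$--orbit drifts to $U_{\omega}$. If instead $\theta_x(0)=0$ for some $x$ in the maximal invariant set $\La$ of $X$ away from $B_a$ (take $x$ on a periodic orbit of $L_a$ outside the support of $\mu$), then the full $Z_H$--orbit of $(x,0,-1)$ rides the saddle connection of $Y$ toward $S^3\times\set{s}$ in both time directions, never leaves $V$, and hence lies in $\Lambda_H$; this destroys the uniqueness of the crossing orbit in Proposition \ref{p.max3}. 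For the same reason your transversality statement only holds on $\set{\mu>0}$: on $\set{\mu=0}$ the image $H(S^3\times\set{0})$ and $S^3\times\set{0}$ coincide on an open set, where they certainly do not meet transversally.

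The paper resolves this by making the transverse push global in $x$: it sets $\theta_x(l)=l+h(l)+g(x)$ with $h(0)=\epsilon/4>0$ independently of $x$, and lets $g$ be a small negative bump supported near $B_a$ with $g(p)=-\epsilon/4$ and $dg\neq 0$, so the push is cancelled only along the level set of $g$ through $p$. (The list of properties in the lemma is in fact internally inconsistent --- a map equal to the identity outside $V_a$ cannot have $\theta_x(0)>0$ there --- and the paper's construction quietly sacrifices the ``identity outside $V_a$'' clause, which is never used afterwards, in favor of the positivity clause, which is.) Your construction is repairable with one additional requirement: insist that $\mu>0$ on a neighborhood of $\La\setminus \mathrm{int}(B_a)$ (for instance $\mu>0$ on the interior of $V_a$), so that $\theta_x(0)=\mu(x)\bigl(1-\rho(x)\bigr)\eta(0)>0$ for every $x\notin B_a$ whose orbit can remain in $U$; alternatively, adopt the paper's globally positive bump in $l$.
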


\begin{figure}[htb]
\begin{center}
\includegraphics[width=0.65\linewidth]{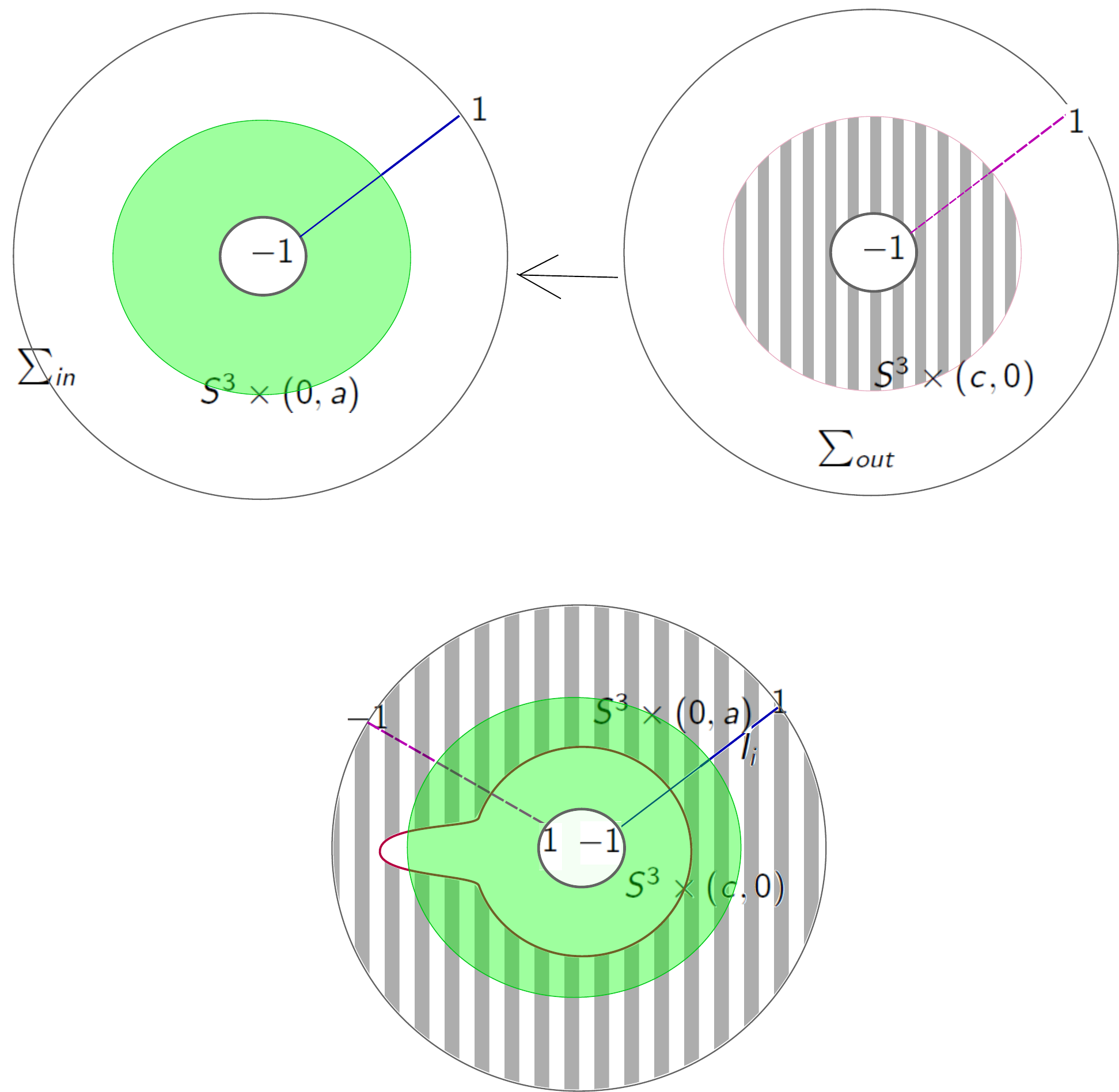}
\end{center}
\caption{The function $\theta_x(l)$. Note that the painted area depicts orbits with their $\alpha$ limits in $U_{\alpha}$ and the striped area depicts orbits with their $\omega$ limits in $U_{\omega}$}
\end{figure}


\begin{figure}[htb]
\begin{center}
\includegraphics[width=0.70\linewidth]{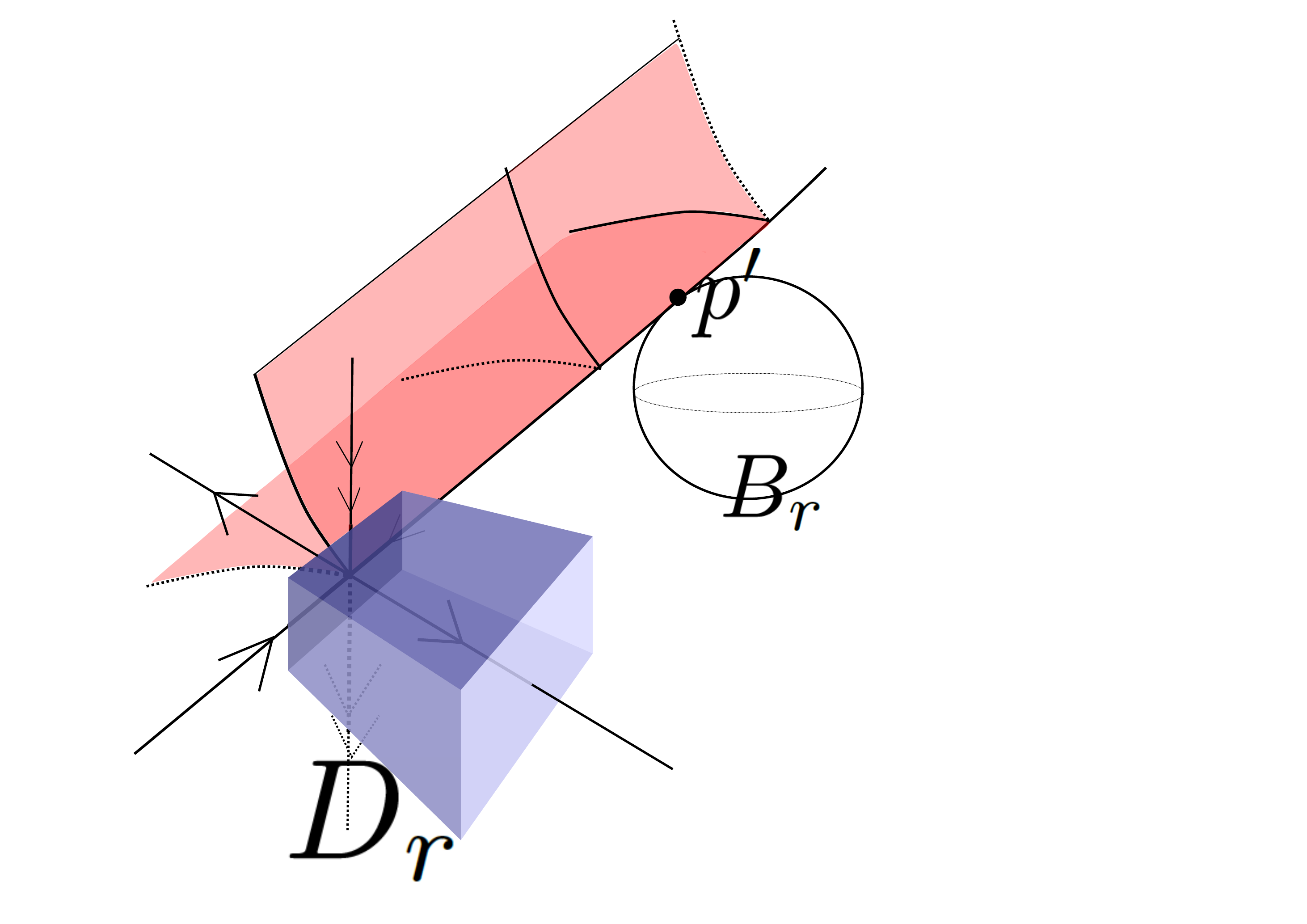}
\end{center}
\caption{The ball $B_a$}
\end{figure}

\begin{prop}\label{p.max2}
We consider $H$ as in Lemma \ref{l.propH}, then the orbits in the maximal invariant set $\Lambda_H$ are contained in $L_a\times \set{s}$ or cross the flow box $\sum\times [-1,0]$ in $$B_a\times [0,1/2]\times\set{-1}\,.$$

\end{prop}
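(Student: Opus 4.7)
The strategy is to apply Proposition~\ref{p.max1}, which already partitions orbits of $\Lambda_H$ into two classes---those contained in $S^3\times\{s\}$ and those that cross $\Sigma$---and to sharpen each class separately.

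For an orbit entirely contained in $S^3\times\{s\}$, observe that $Y(s)=0$ and that the flow box $\Sigma\times[-1,0]$ is disjoint from $S^3\times\{s\}$ (since $s\notin T$). Hence $Z_H$ agrees with $(X,0)$ on this slice, and the orbit has the form $(\phi_X^t(x),s)$ with $x$ in the $X$-maximal invariant set of $U$. The corresponding orbits coming from the Lorenz attractor are exactly $L_a\times\{s\}$, which is the alternative asserted by the proposition.

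For an orbit that crosses $\Sigma$, I fix any entry $(x,l,-1)\in\Sigma\times\{-1\}$ into the flow box and its exit $H(x,l)=(r_l(x),\theta_x(l),0)$. Outside the flow box $Z_H=(X,Y)$, so the $\mathbb{RP}^2$-coordinate between consecutive flow-box crossings evolves purely under $Y$. Lemma~\ref{propY} then yields two constraints. \emph{Forward:} if $\theta_x(l)>0$, the $Y$-orbit of $\theta_x(l)$ goes straight to $\omega$ without returning to $T$, so the $M^5$-orbit enters $U_\omega$ and leaves $V$; hence $\theta_x(l)\leq 0$. \emph{Backward:} if $l<0$, the backward $Y$-orbit of $l$ goes straight to $\alpha$ without returning to $T$, so the $M^5$-orbit enters $U_\alpha$ in negative time and leaves $V$; hence $l\geq 0$.

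Combining $l\geq 0$ and $\theta_x(l)\leq 0$ with the piecewise description of $\theta_x$ in Lemma~\ref{l.propH}, the two cases outside the target set are immediately ruled out: if $l\in(1/2,1]$ then $\theta_x(l)=l>0$, and if $l\in[0,1/2]$ with $x\notin B_a$ then again $\theta_x(l)>0$. Hence the only compatible possibility is $l\in[0,1/2]$ and $x\in B_a$, placing the entry in $B_a\times[0,1/2]\times\{-1\}$ as claimed. The closest thing to a substantive obstacle is purely bookkeeping: one has to verify that this analysis applies uniformly at every entry along the orbit, using that each exit value $\theta_x(l)<0$ corresponds via the Poincar\'e return map of $Y$ on $T$ to a new entry with strictly positive $l$, so the pattern of constraints (forward $\theta\leq 0$, backward $l\geq 0$) reproduces itself at each visit to the flow box.
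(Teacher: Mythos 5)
Your treatment of the orbits that do cross the flow box is essentially the paper's own argument: you rule out $l<0$ by following the backward orbit (where $Z_H=Z_{id}$) into $U_\alpha$, rule out $\theta_x(l)>0$ by following the forward orbit into $U_\omega$, and then use the piecewise description of $\theta_x$ from Lemma \ref{l.propH} to force the entry point into $B_a\times[0,1/2]\times\set{-1}$. This matches the paper's three-case analysis, merely reorganized, and your remark that each exit with $\theta_x(l)<0$ returns to $T$ with positive coordinate is consistent with the properties of $Y$.

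The gap is in the first alternative. From ``$Z_H$ agrees with $(X,0)$ on $S^3\times\set{s}$'' you correctly obtain that a non-crossing orbit of $\Lambda_H$ has the form $(\phi_X^t(x),s)$ with the entire $X$-orbit of $x$ contained in $U$, i.e.\ $x\in\La$, the maximal invariant set of $X$ in $U$. But the set of such orbits is $\La\times\set{s}$, and $\La$ strictly contains $L_a$: it also contains $L_r$, the saddle periodic orbits of the plug, and the wandering orbits joining them. Your closing sentence, that ``the corresponding orbits coming from the Lorenz attractor are exactly $L_a\times\set{s}$,'' identifies only a subset of the non-crossing orbits and does not show that every non-crossing orbit lies in $L_a\times\set{s}$ --- which is in fact false, since for instance $L_r\times\set{s}\subset\Lambda_H$ and never meets $\Sigma$. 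The paper's own proof of this proposition concludes with $\La\times\set{s}$, so the occurrence of $L_a$ in the statement should be read as $\La$ (the subsequent results, e.g.\ Proposition \ref{p.max3} and Lemma \ref{l.unaorbitamas}, indeed use $\Lambda_H=\Lambda\cup O(y)$). You should prove that corrected dichotomy --- non-crossing orbits lie in $\La\times\set{s}$ --- rather than try to squeeze them into $L_a\times\set{s}$, where the argument cannot close.
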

\proof
Suppose that $\gamma $ is an orbit in $\Lambda_H$ that doesn't cross $\sum$. From Proposition $\ref{p.max1}$ these orbits of $\Lambda_{H}$ are in $S^3\times\set{s}$.
Let $h $  be a point of $\gamma$ of coordinates $(x,j)\in S^3\times\mathbb{RP}^2$. If $x$ is not in $\La$ (for $X$) then, the alpha or the omega limit of $x$ must be in $U^c$. Therefore, for a $t$ large enough, $\phi^t_{H}(x,j)\notin V$ ( Recall that $V=[U\times \mathbb{RP}^2]/\set{U_{\alpha}\cup U_{\omega}}\,$).

Then if $\gamma$  doesn't intersect $\sum$, it must be in $\La\times \set{s}$.

Let us suppose now that $\gamma $ intersects $\sum$. Let $h$ be a point in $\gamma\cap\sum\times[-1,0]$ such that $h\in \sum\times\set{-1}$. We write $h$ as $(y,-1)$ and $y$ as $y=(x,l)\in\sum$.
\begin{enumerate}
\item If $l>1/2$, or if $x\notin B_a$ with $l>0$, then $H(x,l)=(r_l(x),\theta(l))$ with $\theta(l)>0$ and then $\phi^{1}_H(y)=(r_l(x),\theta(l))\times\set{0}$. Since outside of the flow-box $Z_{id}=Z_{H}$ now we can look at $Z_{id}$. From the properties of  $Y$ (\ref{propY}) we have that the future orbit of  $\theta(l)>0$, does not cross $T$  and the $\omega$-limit   is $\omega$.
    Then the orbit for $\phi^{t}_H$ is in $U_{\omega}$ for a large enough $t$.
    Then $\gamma$ is not in $\Lambda_H $.
\item  If $l<0$, since $h$ goes  outside of the flow-box for the past (where $Z_{id}=Z_{H}$) now we can look at $Z_{id}$. From the properties of  $Y$ (\ref{propY}) we have that the orbit of  $l<0$ does not cross $T$ for the past and the $\alpha$-limit  is $\alpha$. Then $\gamma$ does not cross again the flow-box for the past. The orbit for $\phi^{t}_H(h)$ is in $U_{\alpha}$ for a negatively large enough $t$ and $\gamma$ is not in $\Lambda_H $.

\item If $x$ is not in  $B_a$ and $l=0$ then $\phi^{1}_H(h)=((H(x),\theta_x(l)),0)$ and $\theta_x(l)>0$. Then, as before,  we have that the orbit of $\theta_x(l)$ for $Y$ does not cross $T$ for the future and the $\omega$-limit  for $Y$ is $\omega$. Then $\gamma$ does not cross again the flow-box for the future and $\gamma$ is not in $\Lambda_H $.

 \end{enumerate}
Then the only other case in which $\gamma$ might be in $\Lambda_H $ is if $\gamma$ crosses  the flow box $\sum\times[-1,0]$ in $B_a\times [0,1/2]\times\set{-1}$.
 \endproof

\begin{prop}\label{p.max3}
There is a unique orbit $\gamma$ in $\Lambda_H$ that crosses $\sum$, that orbit is the orbit of $$(p,0)\times \set{-1}\in \sum\times\set{-1}\,.$$
\end{prop}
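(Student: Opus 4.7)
The plan is to combine forward-time and backward-time constraints on an orbit of $\Lambda_H$ that meets $\Sigma$ in order to pin the crossing down to $(p,0)$. By Proposition~\ref{p.max2} every such crossing is of the form $(x,l)$ with $x\in B_a$ and $l\in[0,1/2]$, and my goal is to show $(x,l)=(p,0)$.

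The central observation is a comparison of time scales. Every $Y$-return to $T$ takes time at least $K_Y+1>K_Y>t_0$, whereas $\phi_X^{\tau}(B_r)\subset D_r$ for every $\tau>t_0$, and $B_a\cap D_r=\emptyset$ since the two balls sit in disjoint small linearized neighborhoods of $\sigma_a$ and of $\gamma_{cu}$. It follows that the orbit $\gamma$ has exactly one $\Sigma$-crossing. Indeed, a subsequent crossing $(x',l')=\phi_{(X,Y)}^{\tau}(H(x,l))$ would require $\tau\geq K_Y+1>t_0$, so its $S^3$-component would satisfy $x'=\phi_X^{\tau}(r_l(x))\in\phi_X^{\tau}(B_r)\subset D_r$, contradicting $x'\in B_a$; the same reasoning applied in backward time rules out a previous crossing.

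With $\gamma$ having only one crossing, the $Y$-component on either side of that crossing must neither return to $T$ nor let the orbit leave $V$. By Lemma~\ref{propY} the backward $Y$-orbit from any $l>0$ does return to $T$ at a negative value, so the absence of a previous crossing forces $l=0$. On the forward side, Lemma~\ref{l.propH} gives $\theta_x(0)\in[-\epsilon,\epsilon]$; if $\theta_x(0)>0$ the $Y$-forward orbit has $\omega$-limit $\omega$ so the orbit enters $U_\omega$ and leaves $V$, while if $\theta_x(0)<0$ the $Y$-forward orbit returns to $T$, producing a forbidden subsequent crossing. Hence $\theta_x(0)=0$, and the forward $\mathbb{RP}^2$-component stays in $V$, converging to $s$ along $\gamma$.

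It remains to identify $x$. The forward $S^3$-orbit $\phi_X^t(r_0(x))$ with $r_0(x)\in B_r$ must stay in $U$; it enters the thin tube $D_r$ around the escaping branch $\gamma_{cu}$ after time $t_0$, and since $\gamma_{cu}$ has its $\omega$-limit at a sink contained in the neighborhood removed when forming $V_a$, the orbit escapes $U$ unless $r_0(x)\in W^s_X(\sigma_r)$. Since $W^s_X(\sigma_r)\cap B_r=\{p'\}$, this forces $r_0(x)=p'$; by Lemma~\ref{l.propH} the restriction $r_0|_{B_a}\colon B_a\to B_r$ is a bijection with $r_0(p)=p'$, so $x=p$. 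One then checks that the orbit of $(p,0)\times\{-1\}$ genuinely belongs to $\Lambda_H$: backward the $S^3$-part converges to $\sigma_a$ along $W^u_X(\sigma_a)\ni p$ and the $\mathbb{RP}^2$-part to $s$ along $\gamma$, while forward, after applying $H$, the $S^3$-part converges to $\sigma_r$ along $W^s_X(\sigma_r)\ni p'$ and the $\mathbb{RP}^2$-part to $s$. The main obstacle I anticipate is making rigorous both that $B_a\cap D_r=\emptyset$ and that every $r_0(x)\in B_r\setminus\{p'\}$ really escapes $U$ forward through $D_r$; both rest on the careful choice of the geometric pieces in Section~\ref{attractor-repeller}.
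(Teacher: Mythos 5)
Your proof is correct and rests on the same ingredients as the paper's (Proposition~\ref{p.max2}, the comparison $t_0<K_Y$ between the $Y$-return time to $T$ and the time needed for $\phi_X$ to push $B_r$ into $\overline{D_r}$, and the listed properties of $H$ and $Y$), but you decompose the argument differently, and the reorganization buys some clarity. The paper runs a case analysis on the pair $(\theta_x(l),x)$ and handles possible returns to the flow box separately inside each case: for $x\neq p$ it uses that $r_l$ is the identity on $D_r$, so the $S^3$-component keeps evolving by $\phi_X$ and eventually leaves $U$; for $x=p$ with $\theta_p(l)<0$ it tracks the orientation reversal of the $Y$-return map to show the next crossing has $\theta_{x_1}(l_1)>0$; and it only recovers $l=0$ at the very end from the property that $H(p,l)=(p',0)$ forces $l=0$. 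You instead isolate up front the statement that an orbit of $\Lambda_H$ meets $\Sigma$ at most once (two consecutive crossings would force the later one to have $S^3$-component in $\overline{D_r}$, hence outside $B_a$, contradicting Proposition~\ref{p.max2}), and then read off $l=0$, $\theta_x(0)=0$ and $x=p$ in succession; in particular you never need the orientation-reversal argument nor the last injectivity property of $H$. Two points should be made explicit, both of which the paper also leaves implicit: first, $\overline{D_r}\cap B_a=\emptyset$, which holds because $D_r$ is a small neighbourhood of the escaping orbit $\gamma_{cu}$, and that orbit never enters the linearized neighbourhood of $\sigma_a$; second, when $r_l(x)=p'$ the forward $\phi_X$-orbit converges to $\sigma_r\in\partial D_r$ rather than strictly entering $D_r$, so in your uniqueness step the honest conclusion is only that the $S^3$-component of a later crossing lies in $\overline{D_r}$ or near $\sigma_r$ --- which is still disjoint from $B_a$, so the contradiction survives.
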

\proof
Let $\gamma$ be an orbit in $\Lambda_H$.
From proposition (\ref{p.max2}), we already know that if an orbit of $\Lambda_H$ crosses $\sum$ then it crosses at a point $y=(x,l,-1)\in B_a\times [0,1/2]\times\set{-1}$.

If $\theta_x(l)<-\epsilon<0$ recall that the properties of $H$ (Lemma \ref{l.propH}) give us that  then $l<0$.

Suppose now that $l\geq0$ and that $\theta_x(l)>0$. As in our previous proposition this implies that $\phi_H^{t+1}(y)\notin U$ for $t$ large enough.

If $l\geq0$ and $-\epsilon\leq\theta_x(l)\leq 0$  then $x\in B_a$, and  $r_l(x)\in B_r$. Suppose that $x\neq p$.
 Then $$\phi_H^{t+1}(y)\notin \sum\times[-1,0]$$ for all $K_Y>t>0$, and therefore $\phi_H^{t+1}(y)=\phi_{id}^{t}(\phi_H^{1}(y))$ for all $K_Y>t>0$.
 Let us consider $t_0$ as in the properties of $H$ (Lemma \ref{l.propH}). Recall that $t_0 $ is such that  $\phi_X^{t}(B_r)\subset\overline{D_r}$  for all $K_Y>t>t_0$.
We  call  $$\phi_{id}^{t_0}(\phi_H^{1}(y))=(x_1,z_1)\in S^3\times\mathbb{RP}^2\,.$$
 Since $x_1\in u_r$ and is not in $L_r$ (since $x_1\neq p'$), then   $x_1$ is in the  attracting region of a sink  $p_r$, $D_r$  of $X$ (see subsection \ref{completeflow3}).
 Now, for all $t>t_0$ even the ones bigger than $K_Y$,  we have that  $$\phi_H^{t}(x_1,z_1)=(\phi_{Hx}^{t}(x_1,z_1),\phi_{Hz}^{t}(x_1,z_1))$$ and since every time for the future that this orbit crosses the flow-box $\sum\times[-1,0]$, the function $r_l$ is the identity (since  $r_l$ is the identity in $D_r$ ), then $$\phi_H^{t}(x_1,z_1)=(\phi_{X}^{t}(x_1),\phi_{Hz}^{}(x_1,z_1))\,.$$

 Since
 $\phi_X^{t}(x_1)\notin U$ for $t>t_0$ big enough,  then  $\phi_H^{t'}(y) $ is eventually not in $V$ for some   $t>t_0$. Then  $\gamma$ is not in $\Lambda_H$ as wanted.

If $l\geq0$ and $-\epsilon\leq\theta_x(l)< 0$ but $x=p$.
Let $t_y$ be a time in which the orbit returns to the flow-box. That is $t_y$ is such that $$\phi_H^{t_y}(y)=(x_1,l_1,-1)\in \sum \times\set{-1}\,.$$
  Recall from  the properties of $H$ (Lemma \ref{l.propH}) that $t_y\geq K_Y >t_0$ with $t_0$  such that  $\phi_X^{t}(B_r)\subset \overline{D_r}$  for all $t>t_0$.
 Since$-\epsilon\leq\theta_x(l)<0$ and after returning to $\sum \times\set{-1}\,$ the orientation was reversed, then $l_1$ is positive. Since now $x_1$ is not in $B_a$, then $\theta_{x_1}(l_1)>0$. So, now for a small $t>0$, we have that  $$\phi_H^{t+t_y+1}(y)=\phi_{id}^{t}(\phi_H^{t_y+1}(y))\,.$$ This implies that the orbit of $y $ never cuts the flow box again, and therefore, for a big enough $t$, $\phi_H^{t}(y)$ is in $U_{\omega}$. As a consequence  $\gamma$ is not in $\Lambda_H$ as wanted.

The only case left is  $x=p$ and $\theta_x(l)=0$. The last property of $H$ (Lemma \ref{l.propH}) tells us that  $l=0$, so the objective now is to prove that the orbit of
$y=(p,0) $ never leaves $$V=[U\times \mathbb{RP}^2]/\set{U_{\alpha}\cup U_{\omega}}\,.$$
But  $(p,0) $ is in the stable manifold of $\sigma_r$ and in the unstable manifold of $\sigma_a$, and then then the orbit of $y$ is in $\La_H$..

\endproof

\subsection{Multisingular hyperbolicity}
Until now we have constructed a vector field having a chain recurrent class such that  \begin{itemize}
                                                       \item Two singularities of different indexes one in $L_a$ and the other in $L_r$.
                                                       \item All the periodic orbits have the same index and the singularities are in the closure of the periodic orbits.
                                                       \item There are  periodic orbits in $L_a$ such that their  stable manifolds intersect  the unstable manifolds of periodic orbits in $L_r$.
                                                       \item There is only one orbit in the class with the $\alpha$-limit in $L_a$ and the $\omega$-limit in $L_r$.
                                                     \end{itemize}
The goal now is to show that we can choose a diffeomorphism $H $ so that this vector field would be strong multisingular hyperbolic.
After that we will perturb this vector field to an other that will still be strong multisingular hyperbolic, but having a homoclinic connection between periodic orbits in $L_a$ and periodic orbits in $L_r$. This will finish the proof of theorem \ref{t.example}.

In subsection \ref{s.conH} we will prove that there exist a diffeomorphism $H $ with the properties defined in  Lemma \ref{l.propH}, in particular   with the following  property, we require that $$H(S^3\times\set{0})\times\set{0}\in\Sigma\times\set{0}$$  cuts transversally $$S^3\times\set{0}\times\set{0}\in\Sigma\times\set{0}\,.$$

This last property guaranties that the set  $\Lambda_H$  will be strong multisingular hyperbolic.

To show that $\Lambda_H$  is strong multisingular hyperbolic we need to check that we are in the hypothesis of Lemma \ref{c.unamashyp}. Since we have already shown the other hypothesis
the following Lemma implies strong multisingular hyperbolicity.
\begin{lemm}\label{l.unaorbitamas}
Let $y\in\Sigma\times\set{-1}$ be such that $\Lambda_H=\Lambda\cup O(y)$.
There exist a diffeomorphism $H$ such that \begin{itemize}
                                             \item The stable and unstable spaces along the orbit of $S_{Z_H}(y)$ intersect transversally,
                                             \item The orbit of $y$ does not intersect the escaping spaces of the singularities for $Z_H$,
                                           \end{itemize}
Moreover this implies that  $\Lambda_H$ is multisingular hyperbolic.

\end{lemm}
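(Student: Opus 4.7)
The plan is to apply Corollary \ref{c.unamashyp} to $\Lambda$ (already strong multisingular hyperbolic by the previous section) together with the single extra orbit $O(y)$ identified in Proposition \ref{p.max3}. By that proposition, $y$ may be taken as the orbit through $(p, 0) \times \{-1\} \in \Sigma \times \{-1\}$, and one reads off that its $\alpha$- and $\omega$-limits are $(\sigma_a, s)$ and $(\sigma_r, s)$ respectively, both lying in $\Lambda$. Since $\Lambda_H = \Lambda \cup O(y)$ is compact and no other orbit of $\Lambda_H$ accumulates on $O(y)$ away from its limits, a compact filtrating neighbourhood $U'$ with $\Lambda_H$ as maximal invariant set is easily constructed inside $V$. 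Two hypotheses of Corollary \ref{c.unamashyp} are therefore automatic, and what remains are exactly the two bulleted conditions of the present lemma.

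For the avoidance of escaping stable and unstable manifolds, I would argue singularity by singularity using the product structure of $Z_{id}=(X,Y)$. Outside the flow-box the dynamics is a product, so the escaping manifolds at a singularity $(\sigma, s)$ split into an $S^3$-contribution (the escaping manifolds of $\sigma$ already used in Section \ref{attractor-repeller}) and an $\mathbb{RP}^2$-contribution (the branches of $W^s_Y(s)$ or $W^u_Y(s)$ that run into $u_\alpha$ or $u_\omega$). By Lemma \ref{l.propH}, the bottom point $(p, 0)$ of $O(y)$ has $p \in W^u_X(\sigma_a)$ lying off the strong unstable, while $l = 0$ lies on the saddle-connection branch $\gamma$ rather than on the escaping branch of $W^s_Y(s)$; its image $H(p, 0) = (p', 0)$ at the top of the flow-box occupies the analogous non-escaping position for $(\sigma_r, s)$. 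Since the orbit travels with $Z_{id}$ outside the flow-box, each factor component stays clear of the escaping manifolds throughout, giving the first bullet.

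For the transversality of the extended cone fields, the natural test set is the top face $\Sigma \times \{0\}$, which $O(y)$ meets exactly once at $(p', 0, 0)$ by Proposition \ref{p.max3}, and whose tangent space is canonically identified with the $4$-dimensional normal space to the orbit. The key observation is that the forward-invariant unstable cone field around $\alpha(y) = (\sigma_a, s)$, transported through the flow-box by the map induced by $H$, has the submanifold $H(S^3 \times \{0\}) \times \{0\}$ as its trace on $\Sigma \times \{0\}$: this is because $S^3 \times \{0\}$ at the bottom is the locus of points whose $\mathbb{RP}^2$-coordinate lies on $\gamma$, that is, whose past orbit accumulates on $s$ through the saddle connection. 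Analogously, the backward-invariant stable cone field around $\omega(y) = (\sigma_r, s)$ has trace on $\Sigma \times \{0\}$ equal to $S^3 \times \{0\} \times \{0\}$, corresponding to the future orbits whose $\mathbb{RP}^2$-coordinate returns to $s$. The final assertion in Lemma \ref{l.propH} is precisely that these two $3$-dimensional submanifolds meet transversally inside the $4$-dimensional $\Sigma \times \{0\}$ at $(p', 0, 0)$, and this transversality yields a direct-sum splitting of the $4$-dimensional normal bundle into the $2$-dimensional stable and $2$-dimensional unstable bundles of the reparametrized linear Poincar\'e flow along $O(y)$.

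The main obstacle is the dictionary in the previous paragraph: making the identification of the geometric transversality of $H(S^3 \times \{0\})$ and $S^3 \times \{0\}$ in $\Sigma$ with the correct stable and unstable traces of the reparametrized linear Poincar\'e flow completely precise, and in particular checking that the resulting $2+2$ splitting has the same index as the hyperbolic splittings on the periodic orbits in $L_a \times \{s\}$ and $L_r \times \{s\}$, so that it extends to a single coherent hyperbolic structure on all of $\Lambda_H$. Once this matching is in place, Corollary \ref{c.unamashyp} applies verbatim and yields the desired (strong) multisingular hyperbolicity of $\Lambda_H$.
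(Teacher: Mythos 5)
Your proposal follows essentially the same route as the paper: reduce to Corollary \ref{c.unamashyp}, verify the escaping-manifold condition from the positions of $p\in W^u_X(\sigma_a)$, $p'\in W^s_X(\sigma_r)$ and of $l=0$ on the saddle connection, and obtain the transversality of the stable and unstable spaces from the last property of $H$ in Lemma \ref{l.propH} by identifying them with subspaces of the tangent spaces of $S^3\times\{0\}$ and of its image under $H$ on the faces of the flow box. The one caveat — that transversality of two $3$-dimensional submanifolds of the $4$-dimensional section does not by itself force the two $2$-dimensional subspaces sitting inside them to be in direct sum — is equally present in the paper's own argument, so your write-up matches both the structure and the level of detail of the original proof.
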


\proof
Consider the points in $S^3\times \mathbb{RP}^2$, $a=(p,s)$ and $b=(p',s)$ and  $y=(p,0,-1)\in\Sigma\times \set{-1}$.
The orbit of $y$ is in the strong unstable manifold of $a$, (since  the unstable manifold of $s$ intersects $T$ at $0$ for $Y$ ). Analogously $y$ is in the strong stable manifold of $b$ since $\phi_H^{1}(y)=(p',0,0)$. Observe that $a$ and $b$ are regular points and $p\in W^u(\sigma_a)$ and $p'\in W^s(\sigma_r)$ for $X$. therefore $\gamma$ does not intersect the escaping spaces of the singularities for $Z_H$. From Proposition \ref{p.unamasextended} this implies that the center space of the singularities of  $\Lambda_H$ and $\Lambda$ are the same.

From Lemma \ref{l.masuna} we have that there exists an unstable space (for the reparametrized linear Poincar\'{e} flow ) at $y$ that we call $E^u_y$. We can consider a metric so that $\Sigma$ is always  normal 
to the vector field at the flow box.
We take a vector $v\in E^u_y$ at $y$. This vector is tangent to  $$S^3\times\set{0}\times\set{-1}\in\Sigma\times\set{-1}$$ at $y$.

Let us recall that we have assumed at the beginning of the subsection that the image of $$H(S^3\times\set{0})\times\set{0}\in\Sigma\times\set{0}$$ under $H$ cuts transversely $$S^3\times\set{0}\times\set{0}\in\Sigma\times\set{0}\,.$$ Then the image of $v$ under the differential of $H$ (and of $\phi_H^{1}$) is transverse to $S^3\times 0\times O(y)$ at $\phi_H^{1}(y)$, and then so is the image of $v$ under $\Psi^1(v)$, since the direction of the flow is perpendicular to $T\times\set{0}$.
On the other hand Lemma \ref{l.masuna} also gives us a  stable space $E^s_y$ at  $\phi_H^{1}(y)$  that is tangent to  $S^3\times \set{0}\times\set{0} $ at $\phi_H^{1}(y)$. Then the stable and unstable spaces of the reparametrized linear Poincar\'{e} flow are transverse. Then  we are in the hypothesis of Corolary \ref{c.unamashyp} and this completes the proof.
  \endproof


 With this last Lemma we know that the maximal invariant set $\La_H$ is multisingular hyperbolic. But this is not enough, since a small perturbation of $Z_H$ could brake the connection between $L_a$ and $L_r$ and have $\sigma_a$ and $\sigma_r$ in different chain classes.
 We need now to show that the right perturbation of   $Z_H$  will generate  intersection of the stable and unstable manifolds of periodic orbits in $L_a$ and $L_r$. Since $\La_H$ is multisingular hyperbolic for $Z_H$, so will it be for this new vector field and now the singularities will be robustly in the same chain recurrence class.

  The following Lemma implies Theorem \ref{t.example}
  \begin{figure}[htb]
\begin{center}
\includegraphics[width=0.70\linewidth]{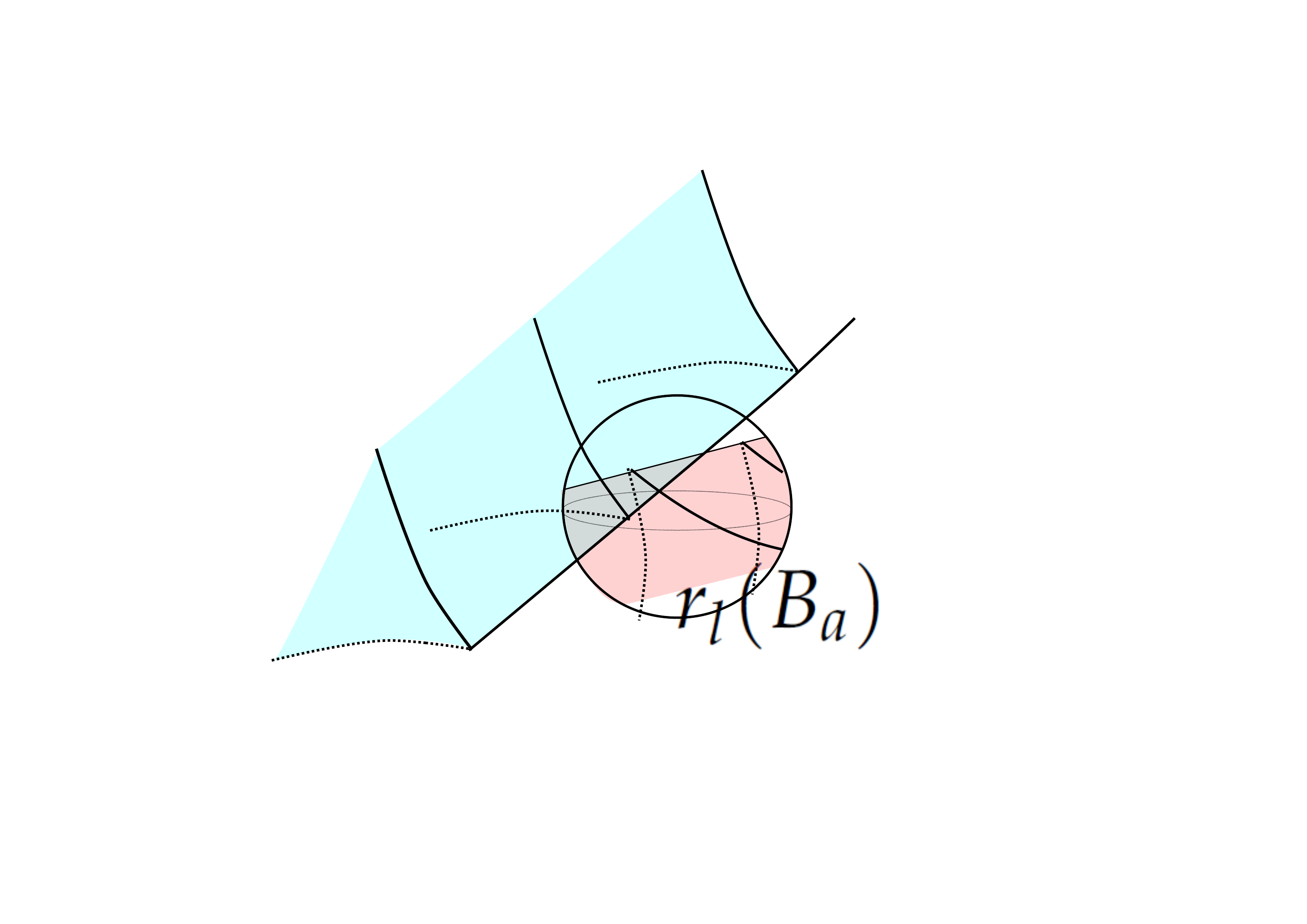}
\end{center}
\caption{A perturbation of $Z_H$, in particular of $r_l(x)$.}
\end{figure}
\begin{lemm}\label{perint}
  There is an arbitrarily small perturbation of $Z_H$, that we call $Z_{H_{\epsilon}}$, and a $C^1$ neighborhood of $Z_{H_{\epsilon}}$ called $\cV$ so that any vector field $Z\in\cV$ has a maximal invariant set $\La_Z$ that is multisingular hyperbolic and there is a chain class $C\subset\La_Z$ that has two singularities of different index accumulated by periodic orbits.

\end{lemm}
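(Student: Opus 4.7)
The plan is to use the multisingular hyperbolicity of $\La_H$ established in Lemma~\ref{l.unaorbitamas} as a starting point, perturb $Z_H$ slightly to create a robust transverse heteroclinic intersection between periodic orbits in $L_a$ and periodic orbits in $L_r$, and then invoke the robustness of multisingular hyperbolicity (Corollary~\ref{c.robusthyp}) together with the upper semi-continuity of $B(X,U)$ (Proposition~\ref{p.extended}) to obtain the desired $C^1$-neighbourhood $\cV$.

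First, I would analyze the geometry of the unique non-chain-recurrent orbit $\gamma=O(y)$ with $y=(p,0,-1)$ given by Proposition~\ref{p.max3}. This orbit lies in $W^u(L_a\times\{s\})\cap W^s(L_r\times\{s\})$: indeed $p\in W^u_X(\sigma_a)$ and $p'\in W^s_X(\sigma_r)$, while the $\mathbb{RP}^2$-coordinate at the return time is forced to be $0$, the stable direction of the saddle $s$ of $Y$. Since $\sigma_a$ is accumulated by periodic orbits of $L_a$ and $\sigma_r$ by periodic orbits of $L_r$, there exist periodic orbits $O_a\subset L_a$ and $O_r\subset L_r$ whose stable, respectively unstable, manifolds pass arbitrarily close to the relevant pieces of $\gamma$.

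Next, I would choose a $C^1$-small modification $H_\epsilon$ of the diffeomorphism $H$ of Lemma~\ref{l.propH}, supported in a small neighbourhood of $(p,0)\in\Sigma$, that shifts the image of a tiny disk in $W^u(O_r\times\{s\})$ across $\Sigma\times\{0\}$ so that it hits $W^s(O_a\times\{s\})$ transversally. By the same transversality argument as in Lemma~\ref{l.unaorbitamas} (the property that $H(S^3\times\{0\})$ cuts $S^3\times\{0\}$ transversally in $\Sigma\times\{0\}$ is open), and because the hypotheses of Corollaries~\ref{c.unamashyp} and~\ref{c.muchasmashyp} are $C^1$-open, the perturbed vector field $Z_{H_\epsilon}$ has a maximal invariant set $\La_{H_\epsilon}$ in $V$ that is still strong multisingular hyperbolic: the only wandering orbits added or displaced are the new transverse heteroclinic orbits between $O_a$ and $O_r$, and these stay away from the escaping strong stable/unstable separatrices of $\sigma_a,\sigma_r$ by the localization of the perturbation. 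The robustness of multisingular hyperbolicity then yields a $C^1$-neighbourhood $\cV$ of $Z_{H_\epsilon}$ on which $\La_Z$ remains multisingular hyperbolic and on which the transverse heteroclinic intersection persists. The inclination lemma applied to the resulting heteroclinic cycle between $O_a$ and $O_r$ forces $\sigma_a\in\overline{L_a}$ and $\sigma_r\in\overline{L_r}$ to lie in one and the same chain recurrence class $C\subset\La_Z$, completing the proof.

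The main obstacle will be the second step: producing a perturbation $H_\epsilon$ that simultaneously (i) creates the desired transverse heteroclinic intersection between periodic orbits of $L_r$ and $L_a$ and (ii) preserves the careful inventory of orbits crossing the flow-box established in Propositions~\ref{p.max2} and~\ref{p.max3}, so that no new recurrent behaviour appears outside the prescribed chain class and every newly relevant wandering orbit still misses the escaping spaces of $\sigma_a,\sigma_r$ and carries transversal stable/unstable cone fields. Once this localized perturbation is produced, the remaining ingredients (robustness of multisingular hyperbolicity and persistence of transverse heteroclinic intersections) are standard.
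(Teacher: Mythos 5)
Your overall architecture agrees with the paper's: perturb $Z_H$ so that the fragile connection through the flow box is replaced by a robust transverse heteroclinic intersection between periodic orbits, then invoke the robustness of (strong) multisingular hyperbolicity to get the neighbourhood $\cV$, and conclude that the two singularities lie in one chain class. However, there are two concrete problems with the middle step as you state it. First, you have the direction of the required intersection reversed. The intersection $W^u(O_r\times\{s\})\pitchfork W^s(O_a\times\{s\})$ that you propose to create already exists robustly: it comes from the $S^3$ construction (the wandering orbits from $L_r$ to $L_a$ crossing $A_0$ and $A_1$, with the transversality of the radial foliations from Lemma~\ref{l.transvers}), and it does not pass through the flow box at all. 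What is missing, and what must be manufactured by perturbing $H$, is the opposite connection $W^u(O_a\times\{s\})\pitchfork W^s(O_r\times\{s\})$: the flow-box orbit of $(p,0,-1)$ is the unique orbit with $\alpha$-limit in $L_a$ and $\omega$-limit in $L_r$, and it joins $W^u(\sigma_a)$ to $W^s(\sigma_r)$ rather than invariant manifolds of periodic orbits.

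Second, you assert rather than produce the perturbation $H_\epsilon$ creating the transverse intersection between the invariant manifolds of two specific periodic orbits; this is precisely the hardest point, since those manifolds only pass near $(p,0)$ and $(p',0)$ without containing them. The paper resolves this in two stages: it first replaces $p$ and $p'$ by small translates $T_\epsilon(p)$, $T_\epsilon(p')$ lying on the boundary of small balls $b_a\subset B_a$, $b_r\subset B_r$ in which the periodic orbits of $L_a$, resp.\ $L_r$, are dense, and modifies $r_l$ accordingly (a $C^1$-small change that preserves multisingular hyperbolicity); it then applies the $C^1$ connecting lemma of Bonatti--Crovisier, using that $Z_{H_\epsilon}$ is star and that all periodic orbits have the same index, to obtain a homoclinic relation between a periodic orbit of $L_a$ and one of $L_r$, which is then robust by transversality and hyperbolicity. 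If you want to avoid the connecting lemma and build the intersection by hand, you would need an explicit accumulation or $\lambda$-lemma argument showing that $W^u(O_a\times\{s\})$ actually sweeps across the image of the flow box in a controllable way; as written, that step is a genuine gap.
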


\proof
We will make a small perturbation of $H$ and this will result in a small perturbation of $Z_H$.
Let us recall that we can write $H$ as
$$H(x,l)=(r_l(x),\theta_x(l))$$ where $r_l: \sum\to S^3$, $\theta_x:\sum\to T $.

Let us again choose coordinates in the linearized neighborhood of $\sigma_r$ as the chosen before. Consider a small positive traslation along the $y$ direction of the ball $B_r$, $T_\epsilon(B_r)$ so that there exists  $b_r\subset{B_r}$  a small ball such that the periodic orbits of $L_r $ are dense in $b_r$ and has $T_\epsilon (p')$ in it's boundary. Analogously for $B_a$
We can perturb $r_l$ to $r'_l(x)$ so that $T_\epsilon(B_a)$ is sent to $T_\epsilon(B_r)$ and  $r'_l(T_\epsilon(p))$ is now sent to $T_\epsilon(p')$. 
By choosing $\epsilon$ small enough this can be achived with a $C^r$ small perturbation of $r_l$.
The resulting vector field  $Z_{H_{\epsilon}}$ is still $C^1$ and multisingular hyperbolic.


Now from the fact that periodic orbits are dense in the sets $L_a$ and $L_r$, and the fact that  $Z_{H_{\epsilon}}$ is star, we get that we can choose a small perturbation by the connecting Lemma in [\cite{BC}] so that the  some periodic  orbit in $L_a$ is homoclinically related to a periodic orbit in $L_r$. Recall that the periodic orbits all have the same index.
This homoclinic intersection is roust.
\endproof

\section{Appendix: Sub constructions}
\subsection{ Construction of the vector field $Y$ in $\mathbb{RP}^2$ }\label{ss.propY}

In this subsection we prove the following Lemma from  section (\ref{s.main})
\begin{lemm*}
There exist a vector field $Y$ in $\mathbb{RP}^2$ with the following properties:
\begin{itemize}
\item $Y$ is a $\mathcal{C}^{\infty}$ vector field

\item It has 3 singularities: a saddle singularity $s$, a source $\alpha$ and a sink $\omega$. 
\item The contracting and expanding  Lyapunov exponents of the saddle are equal in absolute value ($\lambda_{sss}=-\lambda_{uuu}$), and $\lambda_{uuu}>> 6$.
\item One of the stable branches of $s$ (that is an orbit) has $\alpha$-limit  $\alpha$,
and one of the unstable branches of $s$ (that is an orbit) has $\omega$-limit   $\omega$.
\item The other two branches form an  orbit with $\alpha$-limit and $\omega$-limit in $s$ and we call this orbit $\gamma$.
\item There is a transverse section to $\gamma$ and to the flow, that we call $T=[-1,1]\times\set{a}$,  $T\cap\gamma=0\times{a}$ and the flow of $Y$, $\phi^Y(s,t)$ is such that:
    \begin{itemize}
    \item  If $s=(x,a)\in T$ is such that $x>0$, then $\phi^Y(s,t)$  does not cross $T$  for any $t>0$ and has $\omega$-limit in $\omega$. And for $t<0$ there exists only one $t_s<0$ such that $\phi^Y(s,t_s)=s'\in T$ with $s'=(x',a)$, $x'<0$ and the $\alpha$-limit of $s$ is $\alpha$.
    \item If $s=(x,a)\in T$  and $x<0$, then  $\phi^Y(s,t)$ does not cross $T$  for any $t<0$ and has $\alpha$-limit in $\alpha$. And for $t>0$ there exists only one $t_s>0$ such that $\phi^Y(s,t_s)=s'\in T$ with $s'=(x',a)$, $x'>0$ and the $\omega$-limit of $s$ is $\omega$.

    \end{itemize}
\end{itemize}
\end{lemm*}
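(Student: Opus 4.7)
The plan is to build $Y$ explicitly using a closed-disk model of $\mathbb{RP}^2$ in which the homoclinic loop $\gamma\cup\{s\}$ appears as the (identified) boundary circle. I would model $\mathbb{RP}^2$ as the closed disk $D=\{|z|\le 1\}\subset\mathbb{R}^2$ with antipodal identification $z\sim -z$ on $\partial D$; then $\partial D/{\sim}$ is a non-contractible simple closed curve. Place the saddle $s$ at the identified pair $\{(1,0),(-1,0)\}$, equipped with a smooth chart in which $Y$ has the linear form $\dot X=\lambda X$, $\dot Y=-\lambda Y$ with $\lambda\gg 6$, arranged so that the unstable axis is tangent to $\partial D$ at $s$ and the stable axis enters the interior of $D$. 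Place the source $\alpha$ at $(0,1/2)$ and the sink $\omega$ at $(0,-1/2)$ with standard linear source/sink models in small disks around them.

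First I would extend these local models to a smooth vector field on $D$ with the features (i) $\partial D$ is invariant and, after the identification, consists of a single saddle connection from $s$ to $s$, producing $\gamma$; (ii) the remaining stable branch $W^s_+(s)$ emerges from $\alpha$ and the remaining unstable branch $W^u_+(s)$ terminates at $\omega$; (iii) every other orbit in the interior of $D$ has $\alpha$-limit $\alpha$ and $\omega$-limit $\omega$. A concrete way to do this is to take $-\nabla f$ for a Morse function $f:D\to\mathbb{R}$ whose only critical points are $\alpha$ (maximum), $\omega$ (minimum), and $s$ (saddle), with correctly matched separatrices, and then modify it in a collar of $\partial D$ by a bump function so that $\partial D$ becomes an orbit; symmetry of the collar modification under $z\mapsto -z$ ensures that the vector field descends smoothly to the quotient $\mathbb{RP}^2$. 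The condition $\lambda_{uuu}=-\lambda_{sss}\gg 6$ is arranged by rescaling the local model near $s$ through a bump function.

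Next I would analyse the Poincar\'e map on $T$. Choose a transverse section $T=[-1,1]\times\{a\}$ meeting $\gamma$ at a regular point $0\times\{a\}$. The key geometric observation is that, because $\gamma\cup\{s\}$ is a non-contractible simple closed curve in $\mathbb{RP}^2$, a tubular neighbourhood of it is a M\"obius band; hence the holonomy of $Y$ along $\gamma$ once around reverses the orientation of $T$ near $0$. Combined with the Dulac-type behaviour of orbits passing near the hyperbolic saddle $s$ (the transition map through a fixed neighbourhood of $s$ contracts strongly onto $W^u(s)$ at the rate controlled by $\lambda\gg 6$), this forces each orbit starting in $T\setminus\{0\}$ to cross $T$ exactly once after going around $\gamma$: the first-return map takes a point of coordinate $x<0$ to a point of coordinate $x'>0$ (and analogously the inverse return map takes $x>0$ to $x'<0$). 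After this unique extra crossing, the orbit falls out of the tubular neighbourhood of $\gamma$ and, since the non-wandering set of $Y$ on $D\setminus(\gamma\cup\{s\})$ is reduced to $\{\alpha,\omega\}$, it is absorbed monotonically by $\omega$ forward in time or emanates monotonically from $\alpha$ backward in time, with no further crossings of $T$. This produces precisely the four-case dichotomy listed in the lemma.

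The main obstacle is ensuring that the resulting vector field is genuinely $\mathcal{C}^\infty$, descends to $\mathbb{RP}^2$, and has no spurious dynamics: no additional singularities, no periodic orbits, and no orbits crossing $T$ more than once. Smoothness across the antipodal identification is arranged by imposing $z\mapsto -z$-symmetry on all modifications made in a collar of $\partial D$. The absence of extra non-wandering orbits is then a standard Poincar\'e--Bendixson argument: in $D\setminus(\gamma\cup\{s\})$ the field is Morse--Smale with critical set $\{\alpha,\omega\}$, so every orbit has $\alpha$-limit $\alpha$ and $\omega$-limit $\omega$; and the unique-crossing property for $T$ follows from monotonicity of the local transverse coordinate under the flow once the orbit has left the neighbourhood of $s$ (because the flow in $D\setminus(\gamma\cup\{s\})$ is gradient-like for $f$).
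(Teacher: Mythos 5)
Your strategy is in essence the paper's: both constructions realize the homoclinic loop $\gamma\cup\{s\}$ as the core of a M\"obius band in $\mathbb{RP}^2$ and use the orientation-reversing holonomy to produce the sign flip in the return map, attaching a source and a sink to absorb the two remaining separatrices (the paper does this by cutting a flow box out of a linear saddle and gluing an exit transversal to an entry transversal by the orientation-reversing map $d(x)=-cx/a$, which makes the return map on $T_0=C_5$ literally computable). However, one step of your setup is inconsistent as written. You ask that $\partial D$ be a smooth invariant circle containing the saddle at $\pm(1,0)$, tangent there to the unstable axis, with the stable axis entering the interior. An invariant curve through a hyperbolic saddle that is tangent to the unstable eigendirection is locally the unstable manifold, so both boundary germs at $(1,0)$ (and, by the antipodal symmetry, at $(-1,0)$) would be unstable separatrices; the upper semicircular arc would then be a single orbit leaving both of its endpoints in forward time, which is absurd. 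The loop $\gamma\cup\{s\}$ must contain one stable and one unstable separatrix of $s$, and since these are tangent to transverse eigendirections the loop necessarily has a corner at $s$. You must therefore work with a piecewise-smooth curve with a corner (as the paper implicitly does: its loop is the union of a stable and an unstable semiaxis joined through the gluing), not with the round boundary circle carrying a smoothly tangent unstable axis.

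The second soft spot is the ``no spurious dynamics'' step. You rule out periodic orbits and extra recurrence by saying the field is gradient-like for $f$, but the collar modification that makes the loop invariant destroys exactly that property in the region where the dangerous recurrence would live. This matters because the saddle is neutral ($\lambda_{sss}+\lambda_{uuu}=0$), so the attracting or repelling character of the homoclinic loop is not decided by the local data at $s$; if the loop attracted nearby interior orbits, those orbits would cross $T$ infinitely often and the lemma would fail. The correct argument is the one you only gesture at: because the band is a M\"obius band, an orbit that completes one circuit returns on the opposite local side of $W^s_{loc}(s)$ and is ejected along the unstable separatrix leading to $\omega$, so no orbit completes two circuits and nothing can accumulate on the graphic; periodic orbits must then be excluded by a separate crossing or index argument rather than by appeal to the gradient structure. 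With the corner at $s$ corrected and this part of the argument made explicit, your construction does prove the lemma.
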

 \subsubsection{ A vector field with a saddle connection in a M\"{o}bius strip }

 Let us start by defining some simple linear flow in $\RR^2$.
 We take a linear vector field $Y(x,y)=(\lambda_{sss}x, \lambda_{uuu}y )$ defined in $[-2,2]\times[-2,2]$. We ask that $\lambda_{uuu}=-\lambda_{sss}$ and we also ask that  $\lambda_{uuu}>6$.



We consider a close curve $C$ formed by the union of following curves:
\begin{itemize}
\item We consider the orbit of  a point $(-a,2)$. This orbit cuts the vertical line $(-2,y)$ in a point $(-2,a')$. The segment of orbit from $(-2,a')$ to $(-a,2)$ is our first curve $C_1$.
\item  We consider the orbit of  a point $(a,2)$. This orbit cuts the vertical line  $(2,y)$ in a point $(2,c)$. The segment of orbit from $(a,2)$ to $(2,c)$ is  $C_2$.
\item We consider the segment $\set{-2}\times[a',-a']$ as our second curve  $C_3$.

\item We take the  orbit of $(-2,-a')$ and we call the point where it cuts the horizontal line $l$ in a point $(-b,-2)$. The segment of orbit from $(-2,-a')$ to $(-b,-2)$ is our third curve  $C_4$.
\item We consider the segment $\set{2}\times[-c,c]$ as our second curve  $C_5$.

\item  We consider the orbit of  a point $(2,-c)$. This orbit cuts the horizontal line  $(x,-2)$ in a point $(b',-2)$. The segment of orbit from $(2,-c)$ to $(b',-2)$ is  $C_6$.
\item The segment  $[b',-b]\times\set{-2}$ our forth curve  $C_7$.

  \item The segment  $[-a,a]\times\set{2}$ our last curve  $C_8$.
\end{itemize}
\begin{figure}[htb]
\begin{center}
\includegraphics[width=0.70\linewidth]{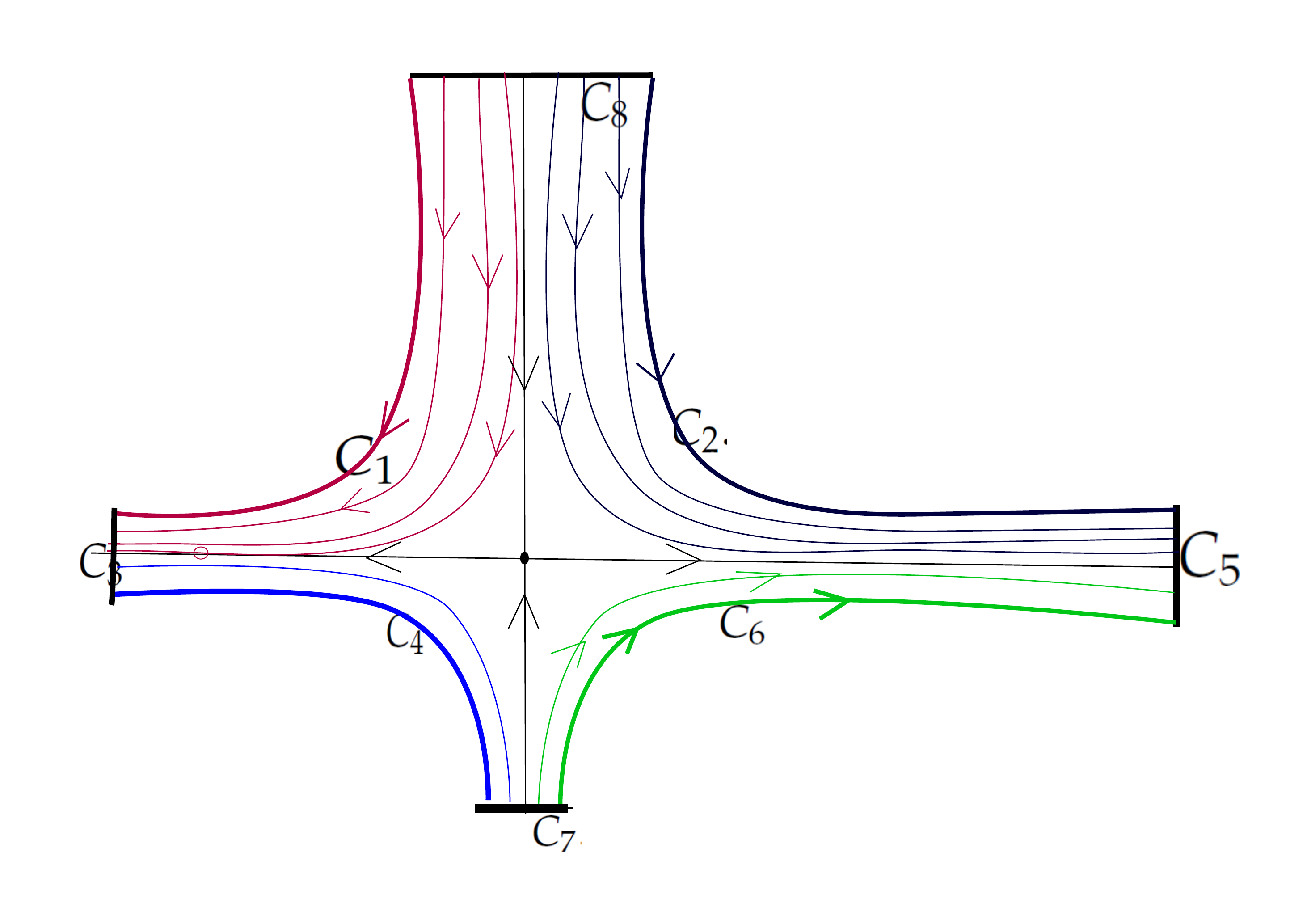}
\end{center}
\caption{The vector field $Y$ in $D$.}
\end{figure}

There is a diffeomorphism that reverse orientation $d:C_8\to C_5$ defined as follows: $$d(x)=-\frac{c.x}{a}\,.$$
Now we glue $C_8$ and $C_5$ along $d$.
There is a connected component in the complement of $C$ that contains $(0,0)$. We call the closure of this connected component $D$.
The manifold $D$ (with boundary $C$) obtained from this gluing is a 2 dimensional non-orientable manifold with a connected boundary, therefore it is a M\"{o}bius strip.

Note that since the $d:C_8\to C_5$ is such that 0 is maped to 0,  then there is a branch of the stable manifold of $(0,0)$ and a branch of the unstable manifold of $(0,0)$ that intersect. That is, there is an orbit $\gamma$ such that $$\gamma\subset W^s(0,0)\cap W^u(0,0)\,.$$ We say then that $(0,0)$ has a saddle connection.

 \subsubsection{Completing the vector field to $\mathbb{RP}^2$}

Let us consider a linear vector field in $\RR^2$ with a sink $\omega$, and let us take a neighborhood $u_{\omega}$ in its basin of attraction. We choose a curve in the boundary such that the vector field is  pointing inwards.
Now we can glue $u_{\omega}$ to $D$ Identifying an arc in  the boundary of $u_{\omega}$ with  $C_3$ along a diffeomorphism,  since the vector field $Y$  is pointing outwards at $C_3$.

 Note that the unstable orbit that is not in the saddle connection in $D$ now has its $\omega  $-limit in $\omega$, after the gluing.

 We call the new vector field $Y$ and what remains of the boundary of  $u_{\omega}$, we now call it $C_3'$.

 Analogously we attach a neighborhood $u_{\alpha}$, containing a source $\alpha$ 
 and glue it to $D$ by identifying a curve in the boundary of   $u_{\alpha}$ witht the segment $C_7$. We call the subset of boundary of $u_{\alpha}$, that was not glued to $D$,  $C_7'$.

Note that the stable orbit  of $(0,0)$ that is not in the saddle connection in $D$ now has its $\alpha$-limit  in $\alpha$.

 We call $D'$ to the region formed by $D$ with $u_{\alpha}$ and $u_{\omega}$ attached.
Since $D'$ is a M\"{o}bius strip, then the complement in
 $\mathbb{RP}^2$ is a disc $R$ having a boundary formed by 4 disjoint curves tangent to the flow ($C_1$, $C_2$, $C_6$ and $C^4$), one curve transverse to the flow and entering $D'$ $C_7'$, and one curve transverse to the flow and exiting  $D'$ $C_3'$. Therefore we can define the flow in the complement of $D'$ in the trivial way by sending the points in  $C_3'$ to  $C_7'$.

 Now we prove Lemma \ref{propY}
\proof
\begin{itemize}
\item Since the original maps are linear, the resulting map after the gluing is also $\mathcal{C}^{\infty}$.
\item The contracting and expanding  Lyapunov values of $Y$ can be taken to be as strong as required
\item As noted above,  one branch of each stable and unstable manifold form a saddle connection $\gamma$ while the others come or go to the sink and source.
\item  The segment, $T_0=C_5$ is a transverse section to $\gamma$ by construction and    is such that:\begin{itemize}
    \item  If $s>0$ $\phi^Y(s,t)$  never touches $T_0$  for any $t>0$ and has $\omega$-limit in $\omega$. And for $t<0$ there exists only one $t_s<0$ such that $\phi^Y(s,t_s)=s'\in T_0$ with $s'<0$. and the $\alpha$-limit of $s$ is $\alpha$.
    \item If $s<0$ $\phi^Y(s,t)$  never touches $T_0$  for any $t<0$ and has $\alpha$-limit in $\alpha$. And for $t>0$ there exists only one $t_s>0$ such that $\phi^Y(s,t_s)=s'\in T_0$ with $s'>0$ and the $\omega$-limit of $s$ is $\omega$.

\end{itemize}
As a consequence of the fact that that $C_8$ was glued to $C_5$ reverting orientation.
\end{itemize}
\endproof
\subsection{Construction of the diffeomorphism $H$}\label{s.conH}
In this subsection we prove the following Lemma from subsection (\ref{s.isotopy}) :
\begin{lemm*}(\ref{l.propH})
Let $\Sigma\subset S^3 \times \mathbb{RP}^2$ be $\sum=S^3\times T$ where $T\subset  \mathbb{RP}^2$ is a uni-dimensional  transverse section to the vector field $Y$ parametrized by  $[-1,1]$.
There exist a $\mathcal{C}^{\infty}$ diffeomorphism isotopic to identity,  $H:\Sigma\to\Sigma$,
$H(x,l)=(r_l(x),\theta_x(l))$
 where $r_l: \sum\to S^3$, $\theta_x:\sum\to T $, with  following properties:
\begin{itemize}

\item $H$ is the identity on the boundary of $\sum$  and  outside of $V_a$ and in $V_r$.
\item The map $r_l(x)$ is the identity for $l=1$ or  $l=-1$, or if $x\in \overline{D_r}$.
\item 
The image of $r_l(B_a)=B_r$, and $r_l(p)=p'$ for all $l\in[-1/2,1/2]$.

\item  If $l\in[-1/2,1/2]^c$ then $\theta_x(l)=l$.
\item  If $l\in[0,1/2]$ and $x\notin B_a$ $\theta_x(l)>0$ .
\item  If $l\in[0,1/2]$ and $x\in B_a$ then $-\epsilon\leq\theta_x(l)\leq \epsilon$,
\item  The only point $l$ such that $H(p,l)=(p',0)$, is $l=0$.
\item $H(S^3\times\set{0})\times\set{0}\in\Sigma\times\set{0}$  cuts transversally $S^3\times\set{0}\times\set{0}\in\Sigma\times\set{0}\,.$

\end{itemize}
\end{lemm*}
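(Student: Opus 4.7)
The plan is to construct $H$ as a \emph{skew product} of the form $H(x,l) = (r_l(x), \theta_x(l))$, with $r_l$ and $\theta_x$ built separately and then combined. The isotopy from $H$ to the identity is obtained afterwards by scaling each perturbation continuously back to the identity map.

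For the $S^3$-factor $r_l$, I would first produce a single ambient diffeomorphism $g \colon S^3 \to S^3$ sending $B_a$ onto $B_r$ and $p$ to $p'$. This follows from the isotopy extension theorem for smoothly embedded balls: there is a path of embedded balls from $B_a$ to $B_r$ in $S^3$, which can be chosen to avoid $\overline{D_r}$ by routing it outside $D_r$ (possible because $D_a$ and $D_r$ are small tubes around escaping separatrices of distinct singularities, sitting on opposite sides of their respective linearizations from $B_a$ and $B_r$), and the extension theorem then produces $g$ supported in a tubular neighborhood of this path. I then set $r_l := g$ for $l \in [-1/2, 1/2]$ and use a smooth path from $g$ back to the identity through diffeomorphisms supported in the same tubular neighborhood to fill in the intervals $[-1, -1/2]$ and $[1/2, 1]$. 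This gives properties 2 and 3 and the $r$-part of property 1.

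For the $T$-factor $\theta_x$, I would pick a smooth function $u \colon S^3 \to \mathbb{R}$ with (i) $u > 0$ strictly on $S^3 \setminus B_a$, (ii) $u(p) = 0$, and (iii) $0$ a regular value of $u$, while arranging $\|u\|_{C^1}$ to be as small as needed. Locally near $p$, I take $u = \lambda y$ in the linearized coordinates around $\sigma_a$ (which vanishes precisely on the local unstable manifold, tangent to $\partial B_a$ at $p$) and extend smoothly by a partition of unity to be strictly positive outside $B_a$; the extension is routine because $\{y=0\}$ meets $\partial B_a$ only at the single point $p$. Choosing a bump $\rho \colon [-1,1] \to [0,1]$ supported in $(-1/2, 1/2)$ with $\rho(0) = 1$, I set $\theta_x(l) := l + \rho(l)\, u(x)$. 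Then property 4 is immediate, property 5 holds since $u > 0$ off $B_a$, property 6 holds for any $\epsilon$ slightly larger than $1/2$ once $\|u\|_\infty$ is small enough, and property 7 holds because $u(p) = 0$ forces $\theta_p$ to be the identity whose only zero is $l = 0$. The transversality property 8 reduces to $0$ being a regular value of $x \mapsto \theta_x(0) = u(x)$, which is exactly condition (iii).

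The main obstacle, I expect, is not any single step but juggling all the support, sign, and regularity constraints simultaneously while keeping $H$ a genuine diffeomorphism: the identity conditions on $\partial \Sigma$, on $\overline{D_r}$, and outside $V_a$; the prescribed behavior of $r_l$ on $B_a$; the prescribed signs of $\theta_x$ on and off $B_a$; and the regularity of $0$ for $u$ must all be achieved simultaneously. Verifying that the combined $H$ is a diffeomorphism is done via the block form of $DH$, whose Schur complement is $1 + \rho'(l)u(x)$; this stays away from $0$ provided $\|u\|_{C^1}$ is small, and on $l \in [-1/2,1/2]$ where $r_l$ is constant the off-diagonal block $\partial_l r_l$ vanishes, so invertibility is automatic. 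The delicate region is the transition intervals $|l| \in (1/2 - \delta,\, 1/2)$ where both $r_l$ and $\rho$ vary simultaneously; there one has to choose the $r_l$-isotopy and $\rho$ so that their supports in $l$ are disjoint (e.g., shrink $\rho$ to be supported in $(-1/4, 1/4)$), reducing the invertibility check to the two easier cases where only one perturbation is active.
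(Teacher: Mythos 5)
Your overall strategy is the same as the paper's: split $H$ into an $S^3$-factor $r_l$ obtained from an isotopy of embedded balls carrying $B_a$ to $B_r$ (and $p$ to $p'$) supported away from $D_r$, constant on the middle $l$-interval and interpolated back to the identity near $l=\pm1$; and a $T$-factor of the form ``identity plus a bump in $l$ coupled with a function of $x$ that vanishes at $p$ with nonzero derivative,'' with the transversality in the last item reduced to $0$ being a regular value of $x\mapsto\theta_x(0)$. The one genuine difference is that you couple the two bumps multiplicatively, $\theta_x(l)=l+\rho(l)u(x)$, where the paper adds them, $\theta_x(l)=l+h(l)+g(x)$; your version has the advantage of satisfying $\theta_x(l)=l$ for $|l|>1/2$ exactly, and your explicit check of invertibility of $DH$ via the Schur complement $1+\rho'(l)u(x)$ is a point the paper leaves implicit.

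There is, however, a concrete gap in your construction of $u$. You require $u>0$ on $S^3\setminus B_a$ (and this is genuinely forced: property 5 at $l=0$ reads $\theta_x(0)=u(x)>0$ for $x\notin B_a$), yet you take $u=\lambda y$ near $p$. By the definition of $B_a$ one has $B_a\subset\set{y\geq 0}$ with $B_a\cap\set{y=0}=\set{p}$, so the plane $\set{y=0}$ is \emph{externally} tangent to $B_a$ at $p$: every neighbourhood of $p$ contains points outside $B_a$ with $y\leq 0$, where $u=\lambda y\leq 0$. No partition of unity can repair this, since the defect occurs arbitrarily close to $p$ where $u$ is already prescribed. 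The correct choice is to place the zero set of $u$ \emph{inside} $\overline{B_a}$, internally tangent to $\partial B_a$ at $p$: for instance take a slightly shrunken ball $B_a'\subset B_a$ tangent to $\partial B_a$ at $p$ and let $u$ be a defining function of $\partial B_a'$, negative on the interior of $B_a'$ and positive outside, with $0$ a regular value; then $u>0$ on $S^3\setminus B_a$, $u(p)=0$, $du_p\neq 0$, and properties 5, 7 and 8 follow as you argue. This is not a cosmetic point: if $u<0$ at some $x\notin B_a$ near $p$, the case analysis of Proposition \ref{p.max3} (which infers $x\in B_a$, hence $r_l(x)\in B_r$, from $l\geq 0$ and $\theta_x(l)\leq 0$) breaks down and extra orbits could survive in $\Lambda_H$. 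Finally, your remark that property 6 only holds with $\epsilon$ slightly larger than $1/2$ points at a defect of the statement itself (the paper's own $\theta$ has $\theta_x(1/2)\approx 1/2$ as well); but since $\epsilon$ is treated as small elsewhere, the bound should be read as applying only where the perturbation $\rho(l)u(x)$ is active, i.e.\ one should require $\|u\|_\infty\leq\epsilon$ rather than enlarge $\epsilon$.
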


\proof

Let us consider  a  closed neighborhood of $D_a\cup D_r$ that we call $C$.
Since $B_a$ and $B_r$ are subsets of $S^3$, they are isotopic to each other. Moreover,  we can choose  $C$   so that they are isotopic to each other in $S^3\setminus C$, since $D_a\cup D_r$ does not disconnect $S^3$.
Therefore there is a function $r':S^3\setminus C^{\mathrm{o}} \times[0,1/2]\to S^3$ such that $$r'(x,0)=id(x) \text{ and }r(B_a,1/2)=B_r.$$
We can choose $r'$ so that it is the identity in the boundary of $C$ for all $t\in[0,1/2]$ and such that $r'(p,1/2)=p'$
We can extend now this function to $S^3$ by asking that $r'\mid{D_a\cup D_r}=Id$.
Now $r:S^3\times[-1,1]\to S^3$ is defined by
\begin{equation}
    r(x,l)=\left\{
\begin{array}{ll}
r'(x,l+1), & \mathrm{ } \text{ if }  l\leq -1/2 \\
r'(x,1/2), & \mathrm{  } \text{ if }  -1/2<l\leq 1/2\\
r'(x,l-1), & \mathrm{ } \text{ if }  l>1/2\,.
\end{array}
\right.
\end{equation}
Now we need to construct $\theta:\sum\to[-1,1]$.

We consider a $\mathcal{C}^{\infty}$ bump function $h:[-1,1]\to[-1,1]$,
\begin{itemize}
\item  If  $l\in[-1/2,1/2]^c$ then $h(l)=0$,
\item if   $l\in[-1/2,1/2]$ then $0<h(l)\leq\frac{\epsilon}{2}$,
\item if   $l=0$ then $h(l)=\frac{\epsilon}{4}$,
\item  $\frac{\partial h(l)}{\partial(l)}\mid_{(0)}\neq 0$
\item  $\frac{\partial h(l)}{\partial(l)}< 1$ for all $l$
\end{itemize}
We can also assume that $h$ is sufficiently differentiable.
Let $B_A$ be an arbitrarily small neighborhood of $B_a$.
We consider now a second bump function $g:S^3\to[-1,1]$
\begin{itemize}
\item  If  $x\in B_A^c$ then $g(x)=0$,
\item If  $x\in B_a$ then $\epsilon\leq g(x)\leq\frac{\epsilon}{2}$,
\item  $g(p)=-\frac{\epsilon}{4}$ and $\frac{g(x)}{\partial(v)}\neq 0$ for any given $v$ direction in $S^3$.  
\end{itemize}
 We define then  $\theta_x$ as follows:
$$\theta_x(l)=id(l)+h(l)+g(x)\,.$$
Since $\frac{\partial h(l)}{\partial(l)}< 1$ for all $l$, $\theta_p(l)=id(l)+h(l)+g(p)\,$ is an increasing function that is $0 $ only at $l=0$.

Note that the image of the  vectors tangent to the coordinates in $S^3\times \set{0}$, under the differential of $H$, have a non vanishing component in the direction of $T$.
This is our desired function.
\endproof
\subsection{ Construction of the plug}\label{plug}

The aim of this subsection is to  prove the following theorem:

\begin{theo*}[\ref{t.tube}]
There exist a vector field $\chi$ such that its flow  $\phi_{\chi}$  defined  in $S^3$
 has  the following properties:

 There is a region $S^2\times{[-1,1]}\subset S^3$ such that
\begin{itemize}
\item The  vector field $\chi$ is entering at $S^2\times\set{-1}$ and points out at $S^2\times\set{1}$
\item The  vector field $\chi$ is such that the chain recurrent set consists of 2  sources  singularities, $p_1$ and  $p_1'$,  2 sinks singularities, $p_2$ and $p_2'$, and  2 periodic saddles, $p_3$ and $p_3'$.
 \item The intersection of the invariant manifolds of the saddles,  with the boundary of $S^2\times{[-1,1]}$, are disjoint circles that we name as follows:
  \begin{itemize}
  \item $W^{s}(p_3)\cap S^2\times{[-1,1]} = c_0$ in $S^2\times\set{-1}$,
  \item $W^{s}(p'_3)\cap S^2\times{[-1,1]} = c'_0$ in $S^2\times\set{-1}$,
\item $W^{u}(p_3)\cap S^2\times{[-1,1]} =c_1$ in $S^2\times\set{1}$,
\item  $W^{u}(p'_3)\cap S^2\times{[-1,1]} = c'_1$ in $S^2\times\set{1}$.
 \end{itemize}
\item The circle $c_0$ bounds a disc not containing $c'_0$, that we call $D_0$. The circle $c'_0$ bounds a disc containing $c_0$, that we call $D'_0$. And they both bound an open annulus called $A_0$.  Analogously we define $D_1$, $D' _1$ and $A_1$.
\item  The orbit $O(x)$ of a point $x$ in $S^2\times\set{-1}$, crosses  $S^2\times\set{1}$ if and only if $x\in A_0$ and $O(x)\cap S^2\times\set{1}\in A_1$,

\item There is a well defined crossing map $P:A_0\to A_1$. Consider the radial foliation $V_0$ in $A_0$  Then the image of a radial foliation  under $P$ intersect transversally a radial foliation  in $A_1$ and it extends to a foliation in $A_1\cup c_1 \cup c'_1$.
 \end{itemize}
The complement of $S^2\times{[-1,1]}$ in $S^3$ are 2 balls, one in the basin of attraction of a source $r$ (that has $S^2\times\set{-1}$ in the boundary ), and the other in the basin of attraction of a sink $a$.
\end{theo*}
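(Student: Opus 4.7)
My proposal is the following construction of $\chi$ in three layers. First, two solid-torus ``saddle plugs'' $\mathbb{T}, \mathbb{T}'$ embedded in the middle of $S^2 \times [-1,1]$, each producing one hyperbolic periodic orbit with the prescribed invariant-manifold geometry. Second, two source singularities and two sink singularities placed in the top and bottom ``trap'' regions of $S^2 \times [-1,1]$. Third, two radial ball caps in the complement of $S^2 \times [-1,1]$ inside $S^3$, supplying the global source $r$ and global sink $a$.

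For each saddle plug I work on $S^1 \times R$ with $R = [0,1] \times [-1,1]$ a rectangle, using a vector field of the form $\chi_0 = \omega\,\partial_\theta + A(r,t)\,\partial_r + B(r,t)\,\partial_t$, with $\omega > 0$ constant and $(A,B)$ a Morse--Smale planar vector field on $R$ whose only interior recurrence is a hyperbolic saddle at $(r_0,0)$. I arrange $(A,B)$ so that exactly one stable separatrix exits the bottom edge $\{t=-1\}$ at a single point and exactly one unstable separatrix exits the top edge $\{t=1\}$ at a single point, the two other separatrices leaving through the outer vertical side $\{r=1\}$. Revolving by $\partial_\theta$ turns the saddle into a hyperbolic periodic orbit whose 2-dimensional stable (resp.\ unstable) manifold meets $S^1 \times \{t=-1\}$ (resp.\ $\{t=1\}$) in a single circle. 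I place two such plugs $\mathbb{T}, \mathbb{T}'$ disjointly in $S^2 \times [-1,1]$ with their bottom and top boundary meridians arranged as the nested pairs $c_0 \subset D_0'$ and $c_1 \subset D_1'$ demanded by the statement. In the complement, near four prescribed points, I set $\chi$ to be a radial model producing the sinks $p_2, p_2'$ below the plugs (inside the trapping regions attached to $D_0$ and $E_0$) and sources $p_1, p_1'$ above (inside the repelling regions attached to $D_1$ and $E_1$); elsewhere I take $\chi$ with strictly positive $t$-component and interpolate smoothly near the plug boundaries, so that the chain-recurrent set is exactly the six-orbit list.

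The crossing map $P\colon A_0 \to A_1$ is smooth because $A_0$ is disjoint from both stable manifolds. Along each orbit through the plugs the $\theta$-rotation accumulates by an amount that tends to $+\infty$ as the starting point tends to $c_0$ or $c_0'$. Choosing the rotation rate $\omega$ large enough in each plug compared to the hyperbolic rates of $(A,B)$ at the saddle, I force the leaves of $P(V_0)$ to meet the radial foliation $V_1$ with angular speed bounded away from zero on all of $A_1$, which gives the required transversality. The extension of $P(V_0)$ across $c_1 \cup c_1'$ is the standard $\lambda$-lemma picture: the image leaves accumulate on the unstable-manifold circles, which are adjoined as limit leaves. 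Finally, I cap the two components of $S^3 \setminus (S^2 \times [-1,1])$ by an inward-pointing radial flow with sink $a$ above $S^2 \times \{1\}$ and an outward-pointing radial flow with source $r$ below $S^2 \times \{-1\}$, matching smoothly with $\chi$ at the boundary since $\chi$ is transverse there. The main obstacle I expect is the uniform transversality of $P(V_0)$ against $V_1$: the transit time near $c_0 \cup c_0'$ diverges logarithmically, so both the spiral density of $P(V_0)$ and the derivative $DP$ blow up, and I must quantify the balance by linearising the flow near each periodic saddle and comparing the angular-twist rate $\omega$ to the hyperbolic expansion of $(A,B)$. Once this quantitative comparison is in place, the remaining properties listed in the statement are immediate from the construction.
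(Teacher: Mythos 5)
Your overall architecture is close in spirit to the paper's: the paper also builds the plug as a planar Morse--Smale field (source, sink, saddle on a rectangle $C$) crossed with a circle, so that the planar saddle becomes a periodic saddle orbit; it then takes two copies with opposite rotation, glues them along the lateral boundary to get $S^2\times[-1,1]$, and caps off with a source ball and a sink ball. The main structural difference is that in the paper the \emph{entire} region $S^2\times[-1,1]$ is the union of the two product pieces $C\times S^1$ (written as two solid cylinders $\mathbb D^2\times[-1,1]$ glued along $\partial\mathbb D^2\times[-1,1]$), the rotation speed is a non-constant function $f$ of the planar coordinate that vanishes exactly where the singularities and the gluing locus sit, and the two copies rotate in opposite senses. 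Every crossing orbit therefore lives inside one of the two twisted products for its whole transit.

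The genuine gap is in your transversality argument. With a \emph{constant} angular speed $\omega$ inside each plug, the crossing map on orbits passing through a plug has the form $P_\theta(\theta,r)=\theta+\omega\,T(r)$, where $T(r)$ is the transit time of the underlying planar orbit; transversality of $P(V_0)$ with the radial foliation $V_1$ is exactly the condition $\partial P_\theta/\partial r=\omega\,T'(r)\neq 0$. Taking $\omega$ large multiplies this quantity but cannot make it non-zero where $T'(r)=0$, and nothing in a generic Morse--Smale choice of $(A,B)$ forces $T$ to be strictly monotone away from the separatrix (the logarithmic blow-up you invoke only controls $T'$ \emph{near} $c_0\cup c_0'$). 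Worse, since your two saddle plugs are disjoint solid tori sitting in the middle of $S^2\times[-1,1]$ and your interpolating field outside them has no angular component, the crossing orbits through the middle of $A_0$ that miss both plugs acquire zero twist, so there $P$ carries radial leaves to radial leaves and the transversality fails outright. The paper avoids both problems by making the twist rate $f$ depend on the radial coordinate across the whole annulus with one sign of rotation on one half and the opposite sign on the other, so that the $r$-derivative of the total angular displacement is non-vanishing everywhere on $A_0$ (this is the content of its Lemma on the crossing map). To repair your version you would need either to extend a radially non-constant twist over all of $A_0$, or to prove monotonicity of the accumulated angular displacement in $r$, neither of which follows from ``$\omega$ large''.

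The remaining points (identification of the chain-recurrent set, the characterization of crossing orbits, and the capping by the balls of $a$ and $r$) are consistent with the paper's treatment, although you should still verify that the free separatrices of each planar saddle fall into the basins of the sinks and sources you placed by hand, which the paper gets for free by putting the source, sink and saddle in a single planar Morse--Smale field.
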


We consider the set $K=\set{(x,y) \,\,\,\text{ tq }\,\,\abs{y}\leq 1\text{ and  } 0\leq x\leq 1}$, and in this set, a flow $\phi_0$ of a vector field $Y_0$ in $\RR^2$ with the following properties:\begin{itemize}
                                        \item The vector field $Y_0$ is Morse-Smale with a source  $p_1=(0,1/2)$, a sink  $p_2=(0,-1/2)$ and a  saddle  $p_3=(1/2,0)$.
                                        \item The vectirfield simetric (in $\RR^2$) with respect to $x$ in a neighborhood of the interval $\set{(0,y) \,\,\,-1\leq y\leq 1}$
                                        \item The saddle $p_3$ is such that  a branch of the unstable manifold intersects the basin of the sink,the other  intersects a corner of $K$.
                                         \item The saddle $p_3$ is such that  a branch of the stable manifold intersects the basin of the source,the other  intersects a corner of $K$.
                                      \end{itemize}

We take a point $q =(1,1-\epsilon)$  for some positive and small $\epsilon$, and  another point of the orbit of $q$ that we call $q'$ with $x$ coordinate $1$. 
 We call $K'$ to the "square" delimited by
 \begin{itemize}
 \item the segments $\set{(0,y)\,\,\,tq\,\,\abs{y}\leq 1}$,
 \item the segment $\set{(x,1)\,\,\,tq\,\,0\leq x \leq 1}$,
 \item  the segments $\set{(1,y)\,\,\,tq\,\,1-\epsilon\leq y\leq 1}$,
 \item the vertical segment that joints $q'$ with $(1,-1)$,
 \item  the orbit segment joining $q$ and $q'$,
 \item  the segment $\set{(x,-1)\,\,\,tq\,\,0\leq x \leq 1}$.
 \end{itemize}
The set $K'$ is diffeomorphic to  $$C=\set{(x,y)\,\,\tq \,\, 0\leq x\leq 1 \,\,\text{ y } \, \abs{y}\leq 1}\,$$ where the orbit of $q$ is sent to the segment $[1,y]$ and out of a small neighborhood of this segment everything is left unchanged.
\begin{figure}[htb]
\begin{center}
\includegraphics[width=0.48\linewidth]{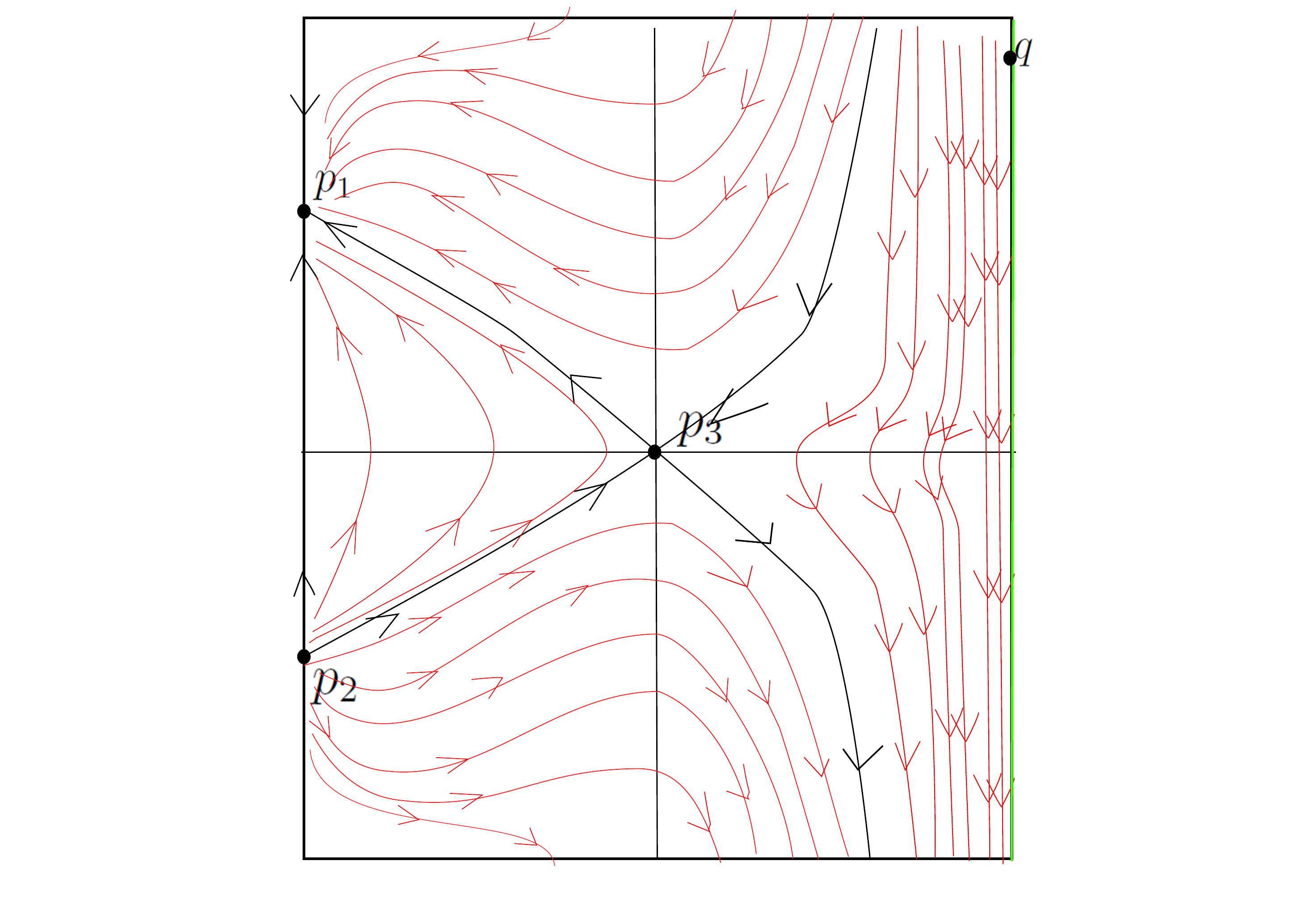}
\end{center}
\end{figure}

 We call $Y_1$ to the vector field tangent to the flow $\phi_1$ obtained by composing the flow with the diffeomorphism above.

We define a $C^{\infty}$ function $f:\RR\to\RR$ such that:
  \begin{itemize}
    \item $f(0)=0 $
    \item $f(1/2)=1$
    \item $f$ is decreasing  $(1/2,1)$
    \item $f'(x)\neq0$ in $[1/2,1]$
    \item $f(1)=0$.
   \end{itemize}

Now we consider  $C\times S^1$ where if $(x,y,\theta)\in C\times S^1$ then $(x,y)\in C $ with $x\in[0,1]$ and $y\in[-1,1]$.  In $S^1 $ we consider the vector field in $C\times S^1$
that if $$v\in T(x,y)C\,\,\,\text{ then } \chi^{+}=(Y_1(v),f(x))\,.$$ 
We can also consider in another copy  of $C\times S^1$ the vector field $$\chi^{-}=(Y_1(v),-f(x))\,.$$ 
 We call $p'_1$, $p'_2$ and $p'_3$ to the source, the sink and the saddle for $\chi^{-}$.
t

\begin{figure}[htb]
\begin{center}
\includegraphics[width=0.48\linewidth]{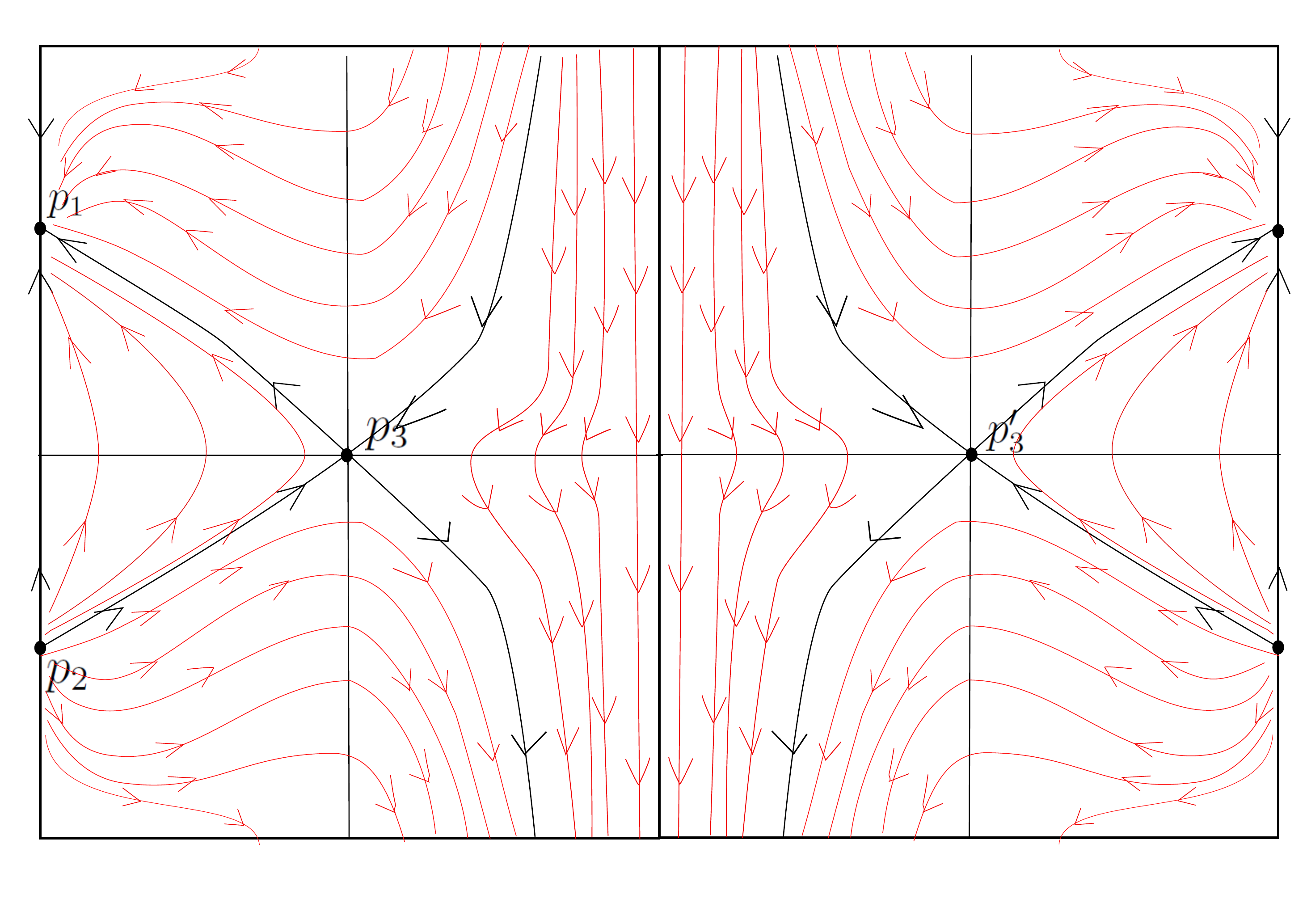}
\end{center}
\end{figure}
We re write  $C\times S^1$ in Cartesian coordinates as $\mathbb{D}^2\times [-1,1]$ so that the vector field is entering at $\mathbb{D}^2\times\set{1}$, points out at  $\mathbb{D}^2\times\set{-1}$ and is tangent to  $(\partial(\mathbb{D}^2)) \times{[-1,1]}$. We can do this change of coordinates and still have a differentiable vector field because of the symmetry properties of $Y_0$
We paste $2$ copies of  $\mathbb{D}^2\times [-1,1]$ along  $(\partial(\mathbb{D}^2)) \times{[-1,1]}$.
In one copy we have $\chi^{+}$ and in the other we have $\chi^{-}$. Since the vector fields are equal in  $\partial(\mathbb{D}^2) \times{[-1,1]}$, both are $C^{\infty}$ even restricted to the boundary and no orbit crosses  $\partial(\mathbb{D}^2) \times{[-1,1]}$, we can define a gluing map such that  the resulting vector field  $\chi$  defined in  $S^2\times [-1,1]$ is smooth.

\begin{figure}[htb]
\begin{center}
\includegraphics[width=0.48\linewidth]{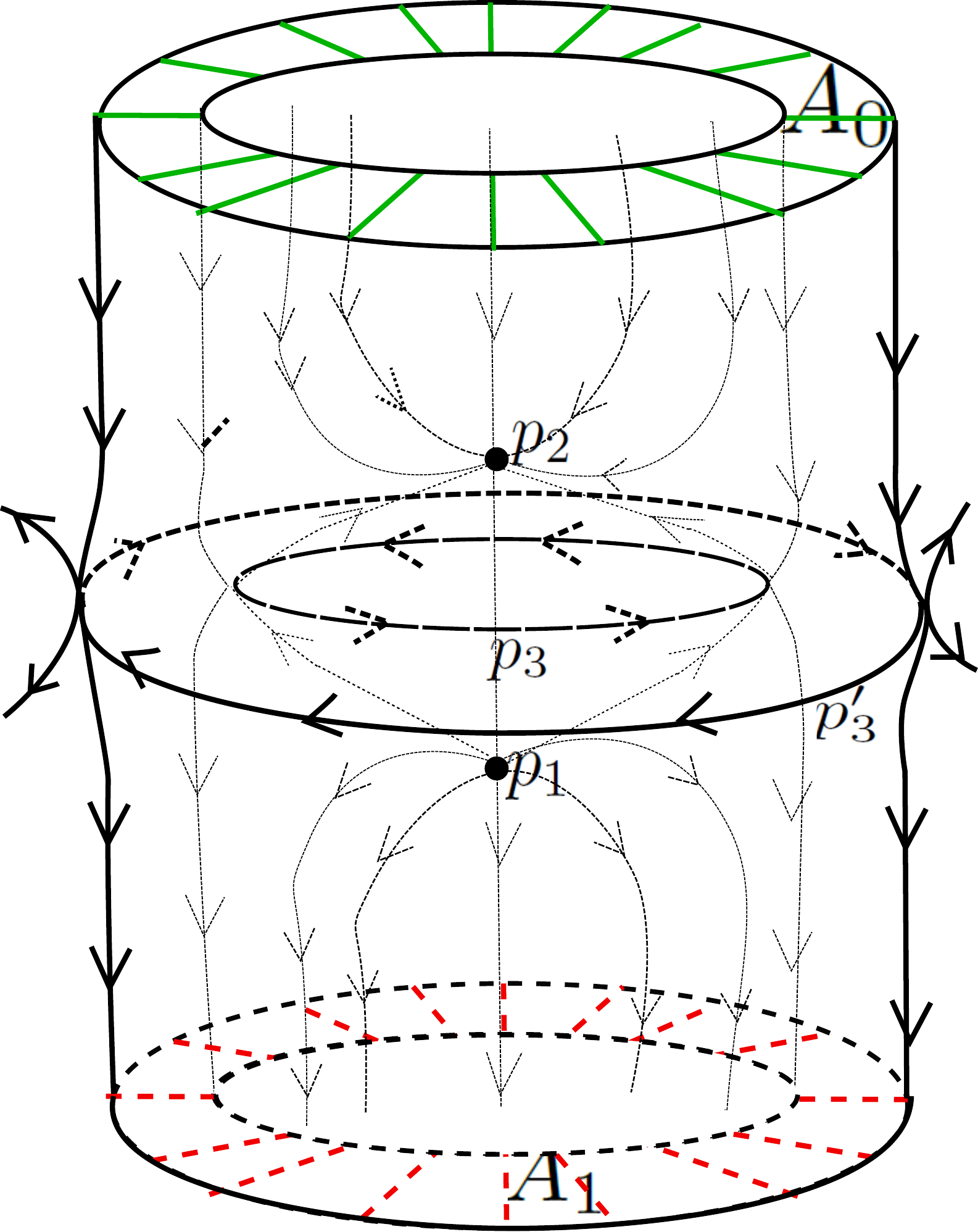}
\end{center}
\end{figure}

For our convenience, the intersections of the stable and unstable manifolds of the saddle periodic orbits with  $\partial(S^2\times [-1,1])$ are named as follows.
 \begin{itemize}
  \item $W^{s}(p_3)\cap S^2\times{[-1,1]} = c_0$ in $S^2\times{-1}$,
  \item $W^{s}(p'_3)\cap S^2\times{[-1,1]} = c'_0$ in $S^2\times{-1}$,
\item $W^{u}(p_3)\cap S^2\times{[-1,1]} = c_1$ in $S^2\times{1}$.
\item  $W^{u}(p'_3)\cap S^2\times{[-1,1]} = c'_1$ in $S^2\times{1}$.
 \item The intersection of the invariant manifolds of the saddles  with the boundary of $S^2\times{[-1,1]}$ are the disjoint circles, $c_0$,  $c'_0$,  $c_1$, $c'_1$.
 \item The circle $c_0$ bounds a disc not containing $c'_0$, that we call $D_0$. The circle $c'_0$ bounds a disc not containing $c_0$, that we call $D'_0$. And they both bound an annulus called $A_0$  Analogously we define $D_1$, $D' _1$ and $A_1$.
 \end{itemize}
To complete the flow to $S^3$ we add a ball contained in the basin of attraction of a a sink $a$ and a ball contained in the basin of repulsion of a source $r$ in the remaining space . We can do this since  the vector field is entering at  $S^2\times{1}$, and vector field is pointing out at  $S^2\times{-1}$. 

 This way, the orbits  behave in one of the following ways (see figure \ref{compflow.f}) :
 \begin{enumerate}
   \item They go from  from $r$ to $a$ and cross the annuli $A_0$ and $A_1$.
  \item They go to the periodic orbit $p'_3$  for the future or the past.
 \item They go for the future to  $p_2$ or $p_2'$,
\item They go for the past to  $p_1$ or  $p_1'$ 
 \end{enumerate}

\begin{figure}[htb]
\begin{center}
\includegraphics[width=0.55\linewidth]{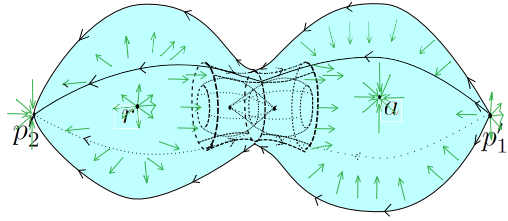}\label{compflow.f}
\end{center}
\caption{The vector field $\chi$ in $S^3$.}
\end{figure}

The next lemma is to check all the conditions of theorem \ref{t.tube}, except for the transversality condition, that we will check in the next subsection
\begin{lemm}\label{t.tube1}
The vector field $\chi$ defined in $S^2\times [-1,1]$ has the following properties:
\begin{itemize}
\item The  vector field $\chi$ is such that the chain recurrent set consists of 2 sources $p_1$ and  $p_1'$ ,  2 sinks $p_2$ and $p_2'$, and  2 periodic saddles $p_3$ and $p_3'$.

\item  The orbit $O(x)$ of a point $x\in S^2\times\set{-1}$ crosses  $S^2\times\set{1}$ if and only if $x\in A_0$ and $O(x)\cap S^2\times\set{1}\in A_1$
\end{itemize}
\end{lemm}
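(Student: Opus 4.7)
The plan is to propagate the dynamics through each layer of the construction. The planar vector field $Y_0$ on $K$ is Morse--Smale with no saddle connection, so its chain recurrent set is exactly $\{p_1,p_2,p_3\}$; the diffeomorphism $K'\to C$ is a conjugacy, so $Y_1$ on $C$ has the same chain recurrent set. For $\chi^+=(Y_1,f(x))$ on $C\times S^1$, any chain-recurrent orbit must project to a chain-recurrent orbit of $Y_1$, hence must lie in $\{p_1,p_2,p_3\}\times S^1$. Since $f(0)=0$ but $f(1/2)=1\neq 0$, the circles over $p_1$ and $p_2$ consist of fixed points while the circle over $p_3$ is a single hyperbolic periodic orbit. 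Rewriting $C\times S^1$ as $\mathbb{D}^2\times[-1,1]$ in polar coordinates collapses the fixed-point circles at $x=0$ (the centre of the disc) to single points, producing an isolated source and an isolated sink. The same analysis applied to $\chi^-$ yields $\{p_1',p_2',p_3'\}$. Gluing along the invariant surface $\partial\mathbb{D}^2\times[-1,1]$ -- on which every orbit either limits on the periodic saddles or leaves through the top or bottom disc -- introduces no new recurrence, so the chain recurrent set of $\chi$ is the six sets listed.

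For the crossing claim, I would analyse the forward orbit of each $x\in S^2\times\{-1\}$. Transversality of the flow at $S^2\times\{-1\}$ forces the $\omega$-limit of $x$ either to lie in one of the six chain-recurrent sets or else to be empty because the orbit exits through $S^2\times\{1\}$. The stable manifold $W^s(p_3)$ is a $2$-dimensional cylinder, given before the polar coordinate change by $W^s_{Y_1}(p_3)\times S^1$; hence $W^s(p_3)\cap(S^2\times\{-1\})$ is the circle $c_0$, and similarly $W^s(p_3')\cap(S^2\times\{-1\})=c_0'$. The two sinks have open basins whose traces on $S^2\times\{-1\}$ are precisely the polar caps $D_0$ and $S^2\setminus\overline{D_0'}$ (the components of the sphere cut off by $c_0\cup c_0'$ other than the annulus $A_0$). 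A point $x\in A_0$ therefore meets neither a sink basin nor a saddle stable manifold, so its forward orbit must exit through $S^2\times\{1\}$. Running the same dichotomy in reverse time on $S^2\times\{1\}$, using the unstable cylinders of $p_3,p_3'$ which meet $S^2\times\{1\}$ in $c_1,c_1'$, one concludes that the exit point must lie in $A_1$.

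The hardest step will be pinning down the intersections $W^s(p_3)\cap(S^2\times\{-1\})$ and $W^s(p_3')\cap(S^2\times\{-1\})$ as exactly the two circles $c_0,c_0'$ bounding $A_0$, and ruling out extra recurrence after gluing. Both issues reduce to the product structure in $C\times S^1$: the stable manifold of the periodic orbit $\{p_3\}\times S^1$ is a product with $S^1$, so in polar coordinates it intersects each transverse disc in a single circle; and the equator $\partial\mathbb{D}^2\times[-1,1]$ is invariant (one of the boundary pieces of $K'$ is already an orbit of $Y_0$), which prevents orbits in the two hemispheres from mixing and keeps the boundary wandering. With these two points established, the partition of $S^2\times\{\pm 1\}$ into cap, annulus, cap is automatic and the crossing map $P\colon A_0\to A_1$ is well defined.
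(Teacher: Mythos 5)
Your proposal is correct and follows essentially the same route as the paper: both reduce everything to the planar Morse--Smale flow $Y_1$ and the (skew-)product structure over $S^1$, identifying the chain recurrent set in each copy of $\mathbb{D}^2\times[-1,1]$ and then using the product form of the invariant manifolds of $\{p_3\}\times S^1$ to split the entry sphere into two caps (sink basins) and the annulus $A_0$ of crossing orbits. Your packaging via the projection semiconjugacy (pseudo-orbits project to pseudo-orbits) and the basin trichotomy is a slightly cleaner way of stating what the paper does by tracing explicit orbit segments in $C$, but it is the same argument.
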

\proof
The flow $\phi_{Y_1}$ is such that the only chain  recurrent points are the sinks or sources. Therefore all points except for $p_1$,  $p_1'$,   $p_2$, $p_2'$, $p_3$ and $p_3'$ are non-chain recurrent  for  $\chi^{+}$ and  $\chi^{-}$.
For  $\chi^{+}$ over $C\times S^1$   the points of the form $(0,y,\theta)$ have zero $S^1$ coordinate and so $p_1$ and $p_2$ are singularities. Moreover, the eigenvalues of the differential of the vector field at  $p_2$ are one real and negative and one complex with negative real part.
The same reasoning gives us that $p_1$ is a source and for $\chi^{-}$ that $p_1'$ is a source and $p_2'$ is a sink.
For  $\chi^{+}$ the point $p_3$ is periodic with its orbit tangent to the $S^1$ coordinate, therefore since for $Y_1$ the point $p_3$ was a saddle, it is a saddle for $\chi^{+}$  as well. The same reasoning gives us that $p_3'$ is a saddle periodic orbit for $\chi^{-}$.
So for the vector field $\chi$,
by construction, there are no orbits crossing from one copy of  $\mathbb{D}^2\times [-1,1]$ to the other therefore we have that  $\chi$ is such that the chain recurrent set consists of 2 sources $p_1$ and  $p_1'$ ,  2 sinks $p_2$ and $p_2'$, and  2 periodic saddles $p_3$ and $p_3'$.

The second item comes from the fact that all orbits of the points between the stable manifold of $p_3$ and $q$ in the segment $[1,y] $ cross again $[1,y]$ between the unstable manifold of $p_3$ and $q'$ for the flow  $\phi_{Y_0}$.  
These are the half open segments that (up to a diffeomorphism) are the base of half of   $A_0$  and  $A_1$ in the product corresponding to  $\chi^{+}$. The other half of   $A_0$  and  $A_1$ that correspond to  $\chi^{-}$ behaves in the same way. 
The other orbits of the vector field   $\phi_{Y_0}$. are in the basin of attraction of the sink, or the basin of repulsion of the source or are the singularities themselves. This is also true in the base of the product dynamics of  $\chi^{+}$  and therefore for  $\chi^{+}$ . The dynamics for $\chi^{-}$  is the same.

\endproof
\subsubsection{A radial foliation and the crossing map $P$}

The aim of this subsection is to prove the last part of Theorem \ref{t.tube}.

Since every orbit of the points in $A_0$ cuts $A_1$ at some moment, we define the first return  map $P:A_0\to A_1$. We take polar coordinates in $A_0$ and $A_1$ (that is, we take coordinates in $S^1\times(0,1)$). We write the diffeomorphism $P$ in this coordinates as $$P(\theta,r)=(P_{\theta}(\theta,r),P_{r}(\theta,r))$$
\begin{figure}[htb]
\begin{center}
\includegraphics[width=0.55\linewidth]{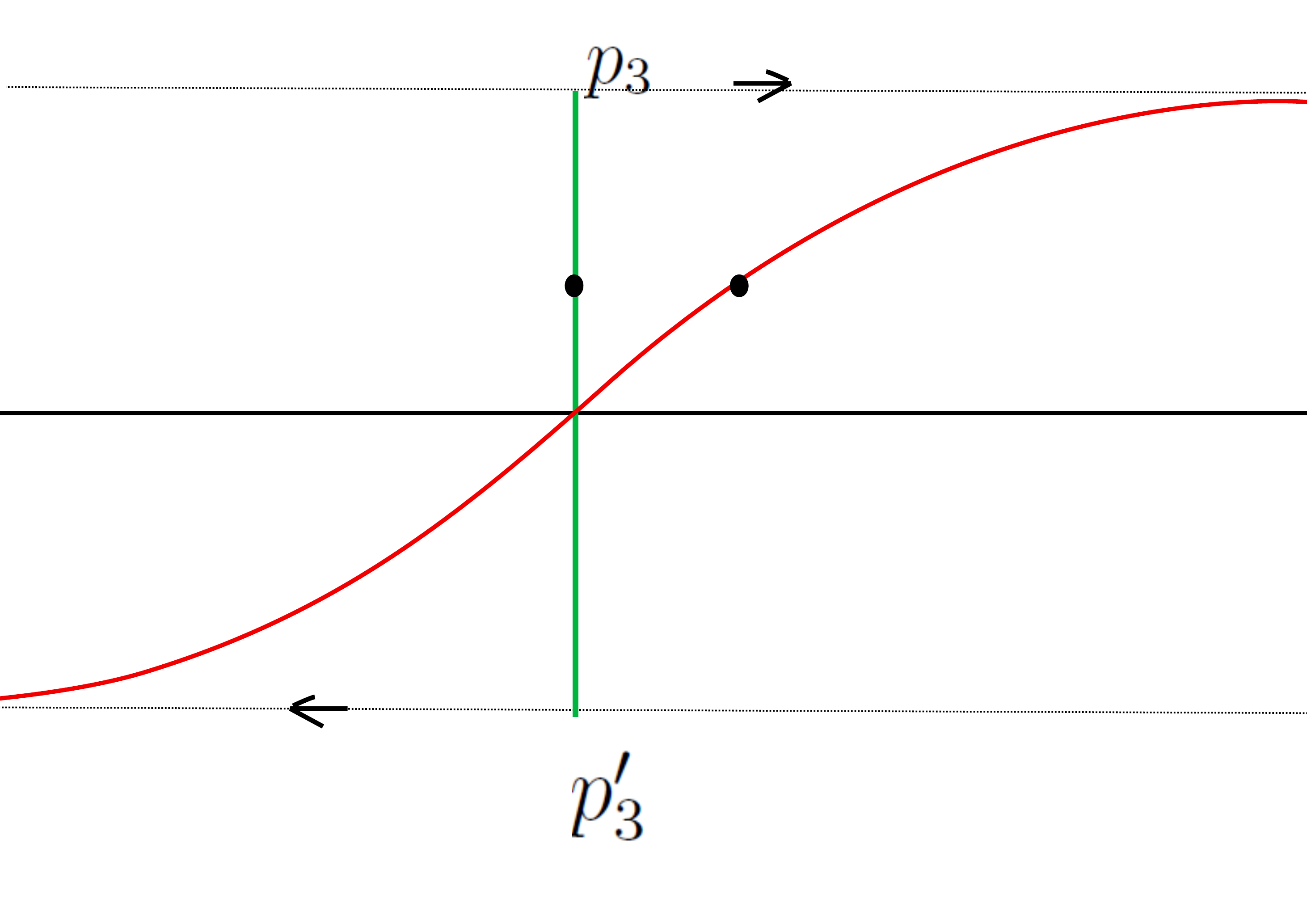}
\end{center}
\caption{The lift of the  map $P:A_0\to A_1$ to  $\RR\times(0,1)$}
\end{figure}

\begin{lemm}\label{l.transvers}
Let us write  $A_0$ and $A_1$ in coordinates $S^1\times(0,1)$ and
let $P:A_0\to A_1$ be the first return map from $A_0$ to $A_1$, $P(\theta,r)=(P_{\theta}(\theta,r),P_{r}(\theta,r))$, defined by the vector field $\chi$.
If $D P(0,r)=(w,z)$ then $w\neq 0$ for all $r \in (0,1)$.
As a consequence, the image of the segment $(0,r)$ with $r \in (0,1)$,   cuts transversally any segment of the form $(0,r)$ with $r \in (0,1)$ in $A_1$.

\end{lemm}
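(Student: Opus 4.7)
The plan is to use the skew-product structure $\chi^{\pm}=(Y_1(x,y),\pm f(x))$ on each half-plug $C\times S^1$: the base vector field $Y_1$ depends only on $(x,y)$, and the $\theta$-coordinate evolves by the driven ODE $\dot\theta=\pm f(x(t))$. Hence the full flow commutes with the $S^1$-translation action on $\theta$. After the reparametrisation $C\times S^1\cong \mathbb{D}^2\times[-1,1]$ and the gluing of the two copies along $\partial\mathbb{D}^2\times[-1,1]$, the polar coordinates $(\theta,r)$ on $A_0$ and $A_1$ are adapted to this structure: $\theta$ records the $S^1$-factor and $r$ parametrises the base $x$-interval of crossing orbits in each half, with the two halves glued across the equator $\{x=1\}$ on which $f$ vanishes. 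In these coordinates the return map must take the form
$$P(\theta,r)=\bigl(\theta+\Theta(r),\,R(r)\bigr),$$
where $R(r)$ is the planar first-return of $Y_1$ from $\{y=-1\}$ to $\{y=+1\}$, and
$$\Theta(r)=\int_{0}^{T(r)}(\pm f)\bigl(x(s;r)\bigr)\,ds,$$
with $T(r)$ the crossing time and $s\mapsto x(s;r)$ the $x$-component of the base orbit starting at the level $r$.

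Differentiating, one gets
$$DP_{(0,r)}=\begin{pmatrix}1 & \Theta'(r)\\ 0 & R'(r)\end{pmatrix},$$
so $DP_{(0,r)}(\partial_r)=(\Theta'(r),R'(r))=(w,z)$. Transversality of the image of the radial segment in $A_0$ with a radial segment in $A_1$ (whose tangent is $\partial_r$) is therefore equivalent to $w=\Theta'(r)\neq 0$, and the lemma reduces to showing $\Theta'(r)\neq 0$ for every $r\in (0,1)$.

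To prove this, I would reparametrise the base orbit by $y$, which is legitimate because the modification $K\leadsto K'$ and the subsequent identification were arranged precisely so that the crossing strip of $Y_1$ contains no vertical tangencies (in particular $Y_1^{y}$ has constant sign there). This gives
$$\Theta(r)=\int_{-1}^{1}\frac{(\pm f)\bigl(x(y;r)\bigr)}{Y_1^{y}\bigl(x(y;r),y\bigr)}\,dy.$$
Since $R$ is a diffeomorphism of intervals, $r\mapsto x(y;r)$ is strictly monotone for each fixed $y$, and crossing orbits lie in the range $x\in[1/2,1]$ on which $f'$ is everywhere nonzero by construction. Differentiation under the integral then yields an integrand of definite sign in $r$, so $\Theta$ is strictly monotone and $\Theta'(r)\neq 0$. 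The hard part is precisely this last step: even with $f'\neq 0$ and $\partial_r x(y;r)$ of fixed sign, the denominator $Y_1^{y}(x,y)$ could in principle depend on $x$ in a way that cancels the contribution of $f'$ at isolated values of $r$. Since both $Y_1$ and $f$ are at our disposal, this can be handled either by a $C^\infty$-small modification of $f$ on $[1/2,1]$ or by reshaping $Y_1$ in the crossing strip so that $Y_1^{y}$ is nearly $x$-independent there; all the other listed properties of the plug are $C^1$-robust and so are preserved.
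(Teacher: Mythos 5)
Your proposal follows the same route as the paper's proof: use the skew--product form $\chi^{\pm}=(Y_1,\pm f(x))$ to reduce the lemma to the non-vanishing of the derivative of the angular displacement $\Theta(r)$, and derive that from the non-vanishing of $f'$ on the crossing region. Your integral formula $\Theta(r)=\int_0^{T(r)}(\pm f)(x(s;r))\,ds$, and its $y$-reparametrised version (which has the added virtue of eliminating the boundary term $T'(r)f(x(T(r);r))$), is the precise version of what the paper writes as $\partial_r P_{\theta}=T_x\,f(g(r))'g(r)'$; that formula implicitly treats the angular speed along the whole orbit through radius $r$ as the single value $f(g(r))$, which is only exact on the corner orbit since the base coordinate $x$ varies along each crossing orbit. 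So your setup is, if anything, more careful than the paper's.

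The step you flag as ``the hard part'' is exactly the step the paper elides, and your diagnosis of what is missing is correct: even with $f'\neq0$ on $[1/2,1]$ and $\partial_r x(y;r)$ of fixed sign, one must rule out cancellation coming from the $x$-dependence of the denominator $Y_1^{y}$, and one must also check that the crossing orbits stay in the range $x\in[1/2,1]$ where $f$ is monotone (an orbit passing near the saddle at $x=1/2$ could a priori contribute terms of the opposite sign if it dipped into $x<1/2$, where $f$ increases from $f(0)=0$ to $f(1/2)=1$). Both can indeed be imposed in the construction of the plug --- e.g.\ take $Y_1^{y}$ independent of $x$ on the crossing strip and keep that strip inside $\{x\geq 1/2\}$ --- and since all the listed properties of $\chi$ are robust, your proposed modification is legitimate. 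But as written neither your argument nor the paper's actually establishes $\Theta'(r)\neq0$ without such a normalisation; it should be stated as an explicit additional requirement on $Y_1$ and $f$ in the construction of Subsection~\ref{plug}, not left as an afterthought.
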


\proof
 From the construction of $\chi$ there is a circle $ S^1\times q \in A_0$ where   $\chi^{+}$ and  $\chi^{-}$ coincide. We consider new coordinates in  $A_0$ and $A_1$ so that the coordinates of $q $ are now  $(\theta,1/2)$.

Let us first consider a lift of $A_0$, to the strip  $\widehat{A_0}$ parametrized by $\RR\times(0,1)$, we also take the lift of $A_1$, to $\widehat{A_1}$, and the lift of $P$, $\widehat{P}$.
We we may choose  this lifts so that the rotation in the sense of $p_3$ is lifted  as positive.

Let us take a point $x=(\theta,r)$ in  $A_0$. We define the  time that it takes for $x$ to reach $A_1 $ under the flow of  $\chi$ as $T_x$.

Suppose that $r<1/2$ Recall that the vector field  is $\chi=(Y_1,Y_2)$ were  $$Y_2(\theta)=f(g(r)) \;d\theta\,,$$ and  therefore $$\frac{\partial P_{\theta}(\theta,r)}{\partial r}=T_x f(g(r))'g(r)'\,,$$
 and therefore non vanishing.
 Suppose that $r>1/2$,  then the vector field $Y_3$ is defined as $$Y_3(\theta)=-f(g(r)) \;d\theta\,,$$ and  therefore $$\frac{\partial P_{\theta}(\theta,r)}{\partial r}=T_xf(g(r))'g(r)'\,,$$
 and therefore non vanishing.

 At $r=1/2$ since the lateral derivatives are not 0 and the function is smooth then.

 $$\frac{\partial P_{\theta}(\theta,r)}{\partial r}\neq0\,.$$

Note that from the construction of the flow $T_x$ goes to infinity as $r$ goes to $0$ or $1$
\endproof

\end{document}